\documentclass{article}
\usepackage[utf8]{inputenc}
\usepackage{tikz}
\usepackage{amsmath}
\usepackage{amssymb}
\usepackage{verbatim}
\usepackage{amsthm}
\usepackage{comment}
\usepackage{pifont}
\usepackage{amsrefs}
\usepackage{quiver}
\usepackage{tikz-cd}
\usepackage{enumitem}
\usepackage{comment}
\usepackage[skip=10pt plus1pt, indent=0pt]{parskip}

\makeatletter
\newcommand{\mylabel}[2]{#2\def\@currentlabel{#2}\label{#1}}
\makeatother

\newcommand{\Weil}{{\ensuremath{\mathbb W \hspace{-.02cm} \mathrm{eil}}}}

\newcommand{\Fun}{\ensuremath{\operatorname{Fun}}}
\newcommand{\Card}{\ensuremath{\mathsf{Card}}}
\newcommand{\Aut}{\ensuremath{\operatorname{Aut}}}
\newcommand{\Poly}{\ensuremath{\mathbb P \hspace{-.02cm} \mathrm{oly}}}
\newcommand{\Set}{\ensuremath{\mathbb S \hspace{-.07cm} \operatorname{et}}}
\newcommand{\Ring}{\ensuremath{\mathbb R \hspace{-.07cm} \operatorname{ing}}}
\newcommand{\CRing}{\ensuremath{\mathbb C \hspace{-.07cm} \operatorname{Ring}}}
\newcommand{\Grp}{\ensuremath{\mathbb G \hspace{-.07cm} \operatorname{rp}}}
\newcommand{\FinGrp}{\ensuremath{\mathbb F \hspace{-.07cm} \operatorname{inGrp}}}
\newcommand{\AbGrp}{\ensuremath{\mathbb A \hspace{-.07cm} \operatorname{bGrp}}}
\newcommand{\Alg}{\ensuremath{\mathbb A \hspace{-.07cm} \operatorname{lg}}}
\newcommand{\Obj}{\ensuremath{\operatorname{Obj}}}
\newcommand{\Grph}{\mathbb G \hspace{-.07cm} \operatorname{rph}}
\newcommand{\Hom}{\mathrm{Hom}}
\newcommand{\SmMan}{\ensuremath{\mathbb S \mathrm{mMan}}}
\newcommand{\FinSet}{{\ensuremath{\mathbb F \mathrm{inSet}}}}
\newcommand{\Top}{{\ensuremath{\mathbb T \mathrm{op}}}}
\newcommand{\CW}{{\ensuremath{\mathbb C \mathrm{W}}}}
\newcommand{\Mod}{\ensuremath{\mathbb M \mathrm{od}}}

\newcommand{\lcm}{\mathrm{lcm}}
\newcommand{\rk}{\mathrm{rk}}
\newcommand{\Vect}{\mathbb V\mathrm{ect}}
\newcommand{\FinVect}{\mathbb F\mathrm{inVect}}
\newcommand{\Dim}{\mathrm{Dim}}

\newtheorem{theorem}{Theorem}[section]
\newtheorem{definition}[theorem]{Definition}
\newtheorem{remark}[theorem]{Remark}
\newtheorem{examples}[theorem]{Examples}
\newtheorem{corollary}[theorem]{Corollary}
\newtheorem{example}[theorem]{Example}
\newtheorem{lemma}[theorem]{Lemma}

\newtheorem{proposition}[theorem]{Proposition}

\newcommand{\kristine}[1]{{\color{red}[[\textbf{Kristine says: }#1]]}}
\newcommand{\florian}[1]{{\color{teal}[[\textbf{Florian says: }#1]]}}

\usepackage[english]{babel}

\usepackage[letterpaper,top=2cm,bottom=2cm,left=3cm,right=3cm,marginparwidth=1.75cm]{geometry}

\usepackage{amsmath}
\usepackage{graphicx}
\usepackage[colorlinks=true, allcolors=blue]{hyperref}
\title{The dimension of the tangent bundle and the universality of the vertical lift}
\author{Florian Schwarz}
\begin{document}
    
\maketitle
\begin{abstract}
This paper explores a new perspective on the universality of the vertical lift in tangent categories by presenting a categorification of the dimension of smooth manifolds.  The universality of the vertical lift is a key part of the axioms of a tangent category as presented in \cite{Cockett2014DifferentialST}. The categorical dimension presented in this paper provides insight into the nature of this property.  The main result is Theorem \ref{thm:dimension_result_weak}, showing that if it exists, the dimension of the tangent bundle must fulfill an equation relating the dimension of the tangent bundle to the dimension of the base.  In particular, when the dimension function is a strong tangent dimension, Theorem \ref{thm:dimension_argument_strong} shows that the dimension of the tangent bundles is either twice the dimension of the base, or equal to the dimension of the base.  Many examples of dimension functions are provided to demonstrate the utility of the definition.  In particular, a consequence of Theorem \ref{thm:dimension_result_weak} is that there are limitations on which functors may be tangent bundle endofunctors for a category.  We show that this means that there are no non-trivial tangent structures on sets, as an example.
\end{abstract}
\tableofcontents{}
\section{Introduction}
The dimension is one of the most foundational properties characterizing a smooth manifolds. While the dimension does not uniquely characterize a manifold, it is a number that is invariant under diffeomorphisms and hence it effectively differentiates manifolds whose dimensions are not equal. The dimension is fulfills many desirable properties beyond diffeomorphism invariance: it is additive under products and it is doubled when forming the tangent bundle. In this work, we generalize the concept of a dimension and investigate the consequences in tangent categories.

\textbf{Tangent categories} categorically generalize the tangent bundle of smooth manifolds, encoding geometric structures in settings which may not traditionally be considered to be part of the field of geometry.
Tangent categories were first discovered by Ji\v{r}\'i Rosický in 1984 \cite{rosicky} and then rediscoved and reinterpreted by Robin Cockett and Geoff Cruttwell in 2012 \cite{Cockett2014DifferentialST}. The main feature of a tangent category is an endofunctor which produces an analogue of the tangent bundle, along with natural transformations which correspond to the usual projection from the bundle to the base space.  There are several other important structures involved in the definition, these are laid out in Definition \ref{def:tangent_cat}.


An important but subtle property that tangent categories are required to fulfill, according to their definition, is the universality of the vertical lift which states that
\[\begin{tikzcd}
	{T_2M} && {T^2M} \\
	M && TM
	\arrow["\nu", from=1-1, to=1-3]
	\arrow["{p_M \circ \pi_0}"', from=1-1, to=2-1]
	\arrow["\lrcorner"{anchor=center, pos=0.125}, draw=none, from=1-1, to=2-3]
	\arrow["{T(p_M)}", from=1-3, to=2-3]
	\arrow["{0_M}"', from=2-1, to=2-3]
\end{tikzcd}\]
is a pullback diagram. In \cite{Cockett2014DifferentialST} Cockett and Cruttwell say that ``while being perhaps the least intuitive aspect of tangent structure, [the universality of the vertical lift] is
also perhaps the most important".

The lack of intuition for the universality of the vertical lift makes it hard to construct additional examples of tangent categories or check if a certain endofunctor is part of a tangent structure.
 
However, in the category of smooth manifolds, pullbacks along submersions fulfill that adding the dimensions along both diagonals gives the same number. It follows from a short argument that the universality of the vertical lift restricts the dimension of the tangent bundle $\dim(T(M))$ to be either $\dim(M)$ or $2 \dim(M)$.

Analogously, in Definition \ref{def:monoid-valued-dimension} we define categorical dimensions on other categories and interpret the universality of the vertical lift in terms of them.

The point is not to generalize existing notions of dimension in many different categories(though it generalizes the dimension of manifolds and the rank of modules), but to interpret the universality of the vertical lift and put restrictions on possible tangent structures.

The main result is Theorem \ref{thm:dimension_result_weak} which states that given a dimension $\dim$ on a tangent category $(\mathbb X,T)$, for any object $X$, the equation
$$
\dim(T^2(X)) + 2 \cdot \dim (X) = 3 \cdot \dim(T(X))
$$
holds. The second main result, Theorem \ref{thm:dimension_argument_strong}, states that under certain circumstances $\dim(T(X)) = \dim(X)$ or $\dim(T(X)) = 2 \cdot \dim(X)$ has to hold. Both of them are consequences of the universality of the vertical lift in the presence of  a dimension.

After revisiting the background on tangent categories in Section \ref{sec:tangent_cats}, we will continue by defining a generalized notion of dimension in Section \ref{sec:dimension_def}. One key aspect of this generalization is that the dimension can be an element of any commutative monoid, not just the natural numbers with addition. In subsection \ref{subsec:tangent_and_dimension}, we will prove Theorems \ref{thm:dimension_result_weak} and \ref{thm:dimension_argument_strong}, the main results of this paper. 
 
These results allow us to narrow down the search for tangent structures on a given category (with dimension). For different underlying categories $\mathbb X$ obtain results of the form ``any tangent structure on $\mathbb X$ fulfills this equation". 

We give several examples of categorical dimensions that go beyond the motivating example of smooth manifolds.
In Section \ref{sec:sets} we observe that the cardinality is a categorical dimension on $\Set^\mathrm{op}$ and use this to show that the trivial tangent structure is the only Cartesian tangent structure on $\Set^\mathrm{op}$.
In Section \ref{sec:groups}, we observe that the cardinality is a categorical dimension on $\Grp$. However, instead of being valued in the monoid given by addition, it is valued in the monoid given by multiplication, demonstrating the versatility of our use of monoids in the definition.

In Section \ref{sec:rings} we observe that, while the Krull dimension is not a categorical dimension for rings, the characteristic of rings is a categorical dimension. Interestingly, the monoid this dimension is valued in is given neither by addition nor multiplication, it is the least common multiple (lcm).

In Section \ref{sec:modules} we observe that the rank of a module over a ring is a categorical dimension. Using the example of the free forgetful adjunction between modules and algebras, we showcase the compatibility of dimensions under adjunctions.
In addition, in the case of vector spaces the rank is the classical dimension. The dimension can be used to prove that there are only two Cartesian tangent structures on the category $\Vect_F$ of vector spaces over a field $F$.

In Section \ref{sec:cw} we observe that the dimension of CW complexes is not a categorical dimension. However, the Betti numbers are a dimension on the opposite category of CW complexes $\CW^\mathrm{op}$. 
In Proposition \ref{prop:excisive_funcotrs_preserve_dimension} we show that, under certain circumstances, dimensions can be transported along excisive functors. Since excisive functors can be thought of as a generalization of homology, this hints at a deep connection between categorical dimensions and homotopy theory.

We include many non-examples which stem from concepts that may intuitively evoke the idea of dimension, but which turn out not to be categorical dimensions.  This highlights the main purpose of the concept that we are introducing in this paper: the dimension is meant to provide context for the universality of the vertical lift.


\noindent \textbf{Acknowledgements} I would like to thank Marcello Lanfranchi for suggesting to write this paper. For their ideas, comments, questions and discussions, I would like to thank Geoff Vooys, David Spivak, Kristaps Balodis, Jose Cruz and Tracey Balehowsky. I would also like to thank  Remy van Dobben de Bruyn for his comment on mathoverflow, pointing out a mistake in an earlier draft.
Special thanks goes to my PhD supervisor Kristine Bauer for her extensive writing support. While developing this project, I was funded by the Alberta Innovates Graduate Student Scholarship.

\section{Tangent categories}\label{sec:tangent_cats}
This section aims to recall notions from \cite{Cockett2014DifferentialST}, namely tangent categories, lax/strong tangent functors between tangent categories and (linear) tangent transformations between tangent functors. Tangent categories were originally defined by Rosick\'y in \cite{rosicky} in a slightly different way; the formulation we use is the one from \cite{Cockett2014DifferentialST}.

\begin{definition}\label{def:additive_bundle}\cite[Definition 2.1]{Cockett2014DifferentialST}
Let $\mathbb X$ be a category and $A\in \mathbb X_0$ an object. An \textbf{additive bundle} $(X,A,+, 0,p)$ \textbf{over $A$} consists of the following data:
\begin{itemize}
    \item an object $X$ and a morphism $p: X\rightarrow A$ such that pullback powers $X_n$ of $p$ exist, and 
    \item morphisms $+ : X_2 \rightarrow X$ and $0: A \rightarrow X$. 
\end{itemize}

In addition, the morphisms must satisfy the following conditions:
\begin{itemize}
\item $p \circ += p \circ \pi_0 = p \circ \pi_1$ and $p \circ 0= 1_A$,
\item the morphism $+$ must be associative, commutative and unital.  
\end{itemize}
That is, the following associativity, commutativity and unitality diagrams 
\begin{center}
\begin{tikzpicture}
\path (0,1.5) node(a) {$X_3$}
(2.5,1.5) node (b) {$X_2$}
(0,0) node (c) {$X_2$}
(2.5,0) node (d) {$X$};
\draw [->] (a) -- node[above] {$1 \times_A +$} (b);
\draw [->] (a) -- node[left] {$+ \times_A 1$} (c);
\draw [->] (c) -- node[above] {$+$} (d);
\draw [->] (b) -- node[left] {$+$} (d);
\end{tikzpicture}
\begin{tikzpicture}
\path (0,1.5) node(a) {$X_2$}
(0,0) node (c) {$X_2$}
(2.5,0) node (d) {$X$};
\draw [->] (a) -- node[above] {$+$} (d);
\draw [->] (a) -- node[left] {$\langle \pi_1 , \pi_0 \rangle $} (c);
\draw [->] (c) -- node[above] {$+$} (d);
\end{tikzpicture}
\begin{tikzpicture}
\path (0,1.5) node(a) {$X$}
(0,0) node (c) {$X_2$}
(2.5,0) node (d) {$X$};
\draw [->] (a) -- node[above] {$1_X$} (d);
\draw [->] (a) -- node[left] {$\langle 0 \circ p , 1_X \rangle $} (c);
\draw [->] (c) -- node[above] {$+$} (d);
\end{tikzpicture}
\end{center}
must commute.
\end{definition}

A morphism of additive bundles consists of morphisms that commute with these structures, as explained in the following definition.
\begin{definition}\label{def:additive_bundle_morphism}\cite[Definition 2.2]{Cockett2014DifferentialST}
For two additive bundles $(X,A, p, + , 0)$ and $(Y,B,p',+',0')$ an \textbf{additive bundle morphism} $(f,g)$ is a pair of maps $f: X \rightarrow Y$ and $g: A \rightarrow B$ such that the following diagrams commute:
$$
\begin{tikzpicture}
\path (0,1.5) node(a) {$X$}
(2.5,1.5) node (b) {$Y$}
(0,0) node (c) {$A$}
(2.5,0) node (d) {$B$};
\draw [->] (a) -- node[above] {$f$} (b);
\draw [->] (a) -- node[left] {$p$} (c);
\draw [->] (c) -- node[above] {$g$} (d);
\draw [->] (b) -- node[left] {$p'$} (d);
\end{tikzpicture}
\begin{tikzpicture}
\path (0,1.5) node(a) {$X_2$}
(2.5,1.5) node (b) {$Y_2$}
(0,0) node (c) {$X$}
(2.5,0) node (d) {$Y$};
\draw [->] (a) -- node[above] {$\langle f \circ \pi_0 , f \circ \pi_1 \rangle$} (b);
\draw [->] (a) -- node[left] {$+$} (c);
\draw [->] (c) -- node[above] {$f$} (d);
\draw [->] (b) -- node[left] {$+'$} (d);
\end{tikzpicture}
\begin{tikzpicture}
\path (0,1.5) node(a) {$A$}
(2.5,1.5) node (b) {$B$}
(0,0) node (c) {$X$}
(2.5,0) node (d) {$Y$};
\draw [->] (a) -- node[above] {$g$} (b);
\draw [->] (a) -- node[left] {$0$} (c);
\draw [->] (c) -- node[above] {$f$} (d);
\draw [->] (b) -- node[left] {$0'$} (d);
\end{tikzpicture}
$$
\end{definition}
Additive bundles are a categorification of the basic structure underlying vector spaces.  This will be helpful in defining tangent categories, since each tangent space in the classical case is a vector space.  In the following definition, beware of the notation $T_2$ (which refers to a pullback) and $T^2$ (which refers to two applications of the functor $T$).  For any category $\mathbb X$, let $1:\mathbb X \to \mathbb X$ denote the identity functor.  

\begin{definition}\cite[Definition 2.3]{Cockett2014DifferentialST}
\label{def:tangent_cat}
A \textbf{tangent category} $(\mathbb X , T , p , 0,+, \ell , c)$ consists of a category $\mathbb X$ and the following structures:
\begin{itemize}
\item a functor $T: \mathbb X \rightarrow \mathbb X$, called the tangent functor with a natural transformation $p: T \rightarrow 1$ such that pullback powers of $p_M:TM\to M$ exist for all objects $M \in \mathbb X$ and $T$ preserves these pullback powers;
\item natural transformations $+: T_2 \rightarrow T$ and $0: 1 \rightarrow T$ making each $(TM, M, p_M, +_M, 0_M)$ 
an additive bundle;
\item a natural transformation $\ell: T \rightarrow T^2$, called the vertical lift, such that $(\ell_M, 0_M)$ is an additive bundle morphism from $(TM, M, p_M, +_M, 0_M)$ to $(T^2M, TM, Tp_M, T(+_M), T(0_M))$;
\item a natural transformation $c: T^2 \rightarrow T^2$ with $c^2 = 1$ and $lc = l$, such that $(c_M, 1)$ is an additive bundlem morphism from $(T^2M, TM, Tp_M, T(+_M), T(0_M))$ to $(T^2M, TM, p_{TM}, +_{TM}, 0_{TM})$.
\end{itemize}
The pullbacks $T^n T_k$ are denoted as \textbf{foundational pullbacks}.

In addition, $\ell$ and $c$ are required to be compatible with themselves and with each other, meaning that the diagrams
\[\begin{tikzcd}[column sep=2.25em]
	TM && {T^2M} & {T^3M} & {T^3M} & {T^3M} & {T^2M} & {T^3M} & {T^3M} \\
	{T^2M} && {T^3M} & {T^3M} & {T^3M} & {T^3M} & {T^2M} && {T^3M}
	\arrow["{\ell_{TM}}"', from=2-1, to=2-3]
	\arrow["\ell", from=1-1, to=1-3]
	\arrow["\ell"', from=1-1, to=2-1]
	\arrow["{T(\ell)}", from=1-3, to=2-3]
	\arrow["{T(c)}", from=1-4, to=1-5]
	\arrow["{c_{TM}}", from=1-5, to=1-6]
	\arrow["{c_{TM}}"', from=1-4, to=2-4]
	\arrow["{T(c)}"', from=2-4, to=2-5]
	\arrow["{c_{TM}}"', from=2-5, to=2-6]
	\arrow["{T(c)}", from=1-6, to=2-6]
	\arrow["{\ell_{TM}}", from=1-7, to=1-8]
	\arrow["{T(c)}", from=1-8, to=1-9]
	\arrow["c"', from=1-7, to=2-7]
	\arrow["{T(\ell)}"', from=2-7, to=2-9]
	\arrow["{c_{TM}}", from=1-9, to=2-9]
\end{tikzcd}\]
all commute.  Finally, we require that  
\begin{equation}\label{diag:universality_tangent_structure}
\begin{tikzcd}
	{T_2M} && {T^2M} \\
	M && TM
	\arrow["\nu", from=1-1, to=1-3]
	\arrow["{p_M \circ \pi_0}"', from=1-1, to=2-1]
	\arrow["\lrcorner"{anchor=center, pos=0.125}, draw=none, from=1-1, to=2-3]
	\arrow["{T(p_M)}", from=1-3, to=2-3]
	\arrow["{0_M}"', from=2-1, to=2-3]
\end{tikzcd}
\end{equation}
is a pullback preserved by powers of $T$, where $\nu$ is a shorthand notation for $\nu = T(+) \circ \langle \ell_M \circ \pi_0 , 0_{TM} \circ \pi_1 \rangle $. This pullback is denoted as the \textbf{universality of the vertical lift}. The foundational pullbacks together with powers of $T$ applied to the universality of the vertical lift will be called the \textbf{tangent pullbacks}.
\end{definition}
While the last condition is formulated differently in Definition 2.3 of \cite{Cockett2014DifferentialST}, it is shown to be equivalent to the formulation here in Lemma 2.12 of \cite{Cockett2014DifferentialST}.

\begin{remark}
The last condition is called the universality of the vertical lift and it encodes the intuition that $T^2M$ relates to $T_2M$ in the same way that $TM$ relates to $M$. For example, in the category $\SmMan$ of manifolds and smooth maps, each object has a dimenion.  

The vertical lift ensures that $\dim(T^2M)-\dim(T_2M) = \dim (TM) - \dim(M)$. In particular if the dimension of the tangent bundle is  a multiple by $n$ of the  dimension of the manifold, this becomes $n^2-(2n-1) = n-1$, which only is the case when $n=1$ or $n=2$. 
Thus the universality of the vertical lift in \SmMan{} enforces the idea that the dimension of the tangent bundle is either the same or twice  the dimension of the original manifold. We will explore the relationship between the vertical lift and the dimension of the tangent bundle in general tangent categories in the later sections of this paper.
\end{remark}

The structures involved in Definition \ref{def:tangent_cat} of tangent categories can be summarized as a diagram:
\[\begin{tikzcd}
	& {T_2(M)} \\
	M & {T(M)} & {T^2(M)}
	\arrow["{+}", from=1-2, to=2-2]
	\arrow["0", shift left, from=2-1, to=2-2]
	\arrow["p", shift left, from=2-2, to=2-1]
	\arrow["\ell", shift left, from=2-2, to=2-3]
	\arrow["{T(p)}", shift left, from=2-3, to=2-2]
	\arrow["c", from=2-3, to=2-3, loop, in=55, out=125, distance=10mm]
\end{tikzcd}\]
for every object $M$.

The main motivating example for the definition are smooth manifolds. This tangent structure is explained in  \cite[Section 2.2]{Cockett2014DifferentialST}. Another example for a tangent structure is described below.

A Cartesian tangent category is a tangent category in which finite products exist and the product projections are compatible with the tangent structure, see \cite[Section 2.4]{Cockett2014DifferentialST}.

\begin{example}\label{ex:trivial_tangent_structure}\cite[Section 2.2]{Cockett2014DifferentialST}
Any category $\mathbb X$ with the identity functor and identity transformations for all of the natural transformations required by Defintion \ref{def:tangent_cat} give an example of a tangent category, $(\mathbb X, 1_\mathbb X, 1_{1_{\mathbb X}} , 1_{1_{\mathbb X}}, 1_{1_{\mathbb X}}, 1_{1_{\mathbb X}},1_{1_{\mathbb X}})$.
\end{example}

From this example we see that, in general, a given category $\mathbb X$ can have multiple distinct tangent structures.
For example the category of smooth manifolds has the classical tangent bundle structure from differential geometry and the trivial tangent structure.
In fact, any category with a nontrivial tangent structure $T\neq 1_\mathbb X$ has additionally the trivial tangent structure, demonstrating that it has at least two distinct tangent structures.

A central object of study in tangent categories are differential bundles, the categorical generalization of the notion of vector bundles from differential geometry. 
We now recall the definition of a differential bundles from \cite{cockett2016diffbundles}:
\begin{definition}\label{def:differential bundle}\cite[Definition 2.3]{cockett2016diffbundles}
Given a tangent category $(\mathbb X,T)$, a \textbf{differential bundle} \\ $(E,M,q,\zeta, \sigma, \lambda)$ consists of 
\begin{enumerate}
    \item an object $E$ (the total space),
    \item an object $M$ (the base space),
    \item a morphism $q:E \to M$ (the projection), admitting finite pullback powers $\{E_n\}_{n \in \mathbb N}$ over itself that are preserved by powers of the tangent functor $T^k$,
    \item a morphism $\zeta: M \to E$ (the zero section) and a morphism $\sigma: E_2 \to E$ (fiberwise addition) such that $(q,\zeta,\sigma)$ is an additive bundle, and
    \item a morphism $\lambda: E \to T(E)$ (the lift) such that $$(\lambda, 0): (E,M,q, \sigma,\zeta) \to (T(E),T(M),T(q)), T(\sigma), T(\zeta))$$ and $$(\lambda, \zeta): (E,M,q, \sigma,\zeta) \to (T(E),E,p, + , 0)$$ are additive bundle morphisms.
\end{enumerate}
In addition the diagram
\begin{equation}
\begin{tikzcd}
	{E_2} &&& TE \\
	M &&& TM
	\arrow["{q\circ \pi_0}"', from=1-1, to=2-1]
	\arrow["0", from=2-1, to=2-4]
	\arrow["{T(q)}", from=1-4, to=2-4]
	\arrow["{T(\sigma) \circ \langle \lambda \circ \pi_0 , 0 \circ \pi_1\rangle }", from=1-1, to=1-4]
	\arrow["\lrcorner"{anchor=center, pos=0.125}, draw=none, from=1-1, to=2-4]
\end{tikzcd}\label{diagram:universality_diffbun}
\end{equation}
is a pullback and the morphisms $\ell_E \circ \lambda , T(\lambda) \circ \lambda : E \to T(T(E))$ are equal.
\end{definition}

\section{Categorical dimensions of tangent bundles}\label{sec:dimension_def}

 In this section, we will present a definition of dimension in categories which generalizes the definition of dimension for smooth manifolds. The dimension of manifolds has many desirable properties, some of which we will list here. 
 \begin{remark}\label{rem:wishlist_dimension}
    Let $X, X'$ and $X''$ be smooth manifolds. The dimension of manifolds has certain properties which we would consider desirable for generalizing dimensions.
     \begin{enumerate}
     \item[\mylabel{item:dim_is_natural_number}{(a)}]\textbf{(valued in $\mathbb N$)} The dimension assigns manifolds to natural numbers $\Dim(X) \in \mathbb N = \{0,1,2...\}$.
     \item[\mylabel{item:pullbacks}{(b)}] \textbf{(additive w.r.t. pullbacks)} The dimension is additive under certain pullbacks: Given a cospan $X\rightarrow X'\leftarrow X''$ of transversal maps, $\Dim(X \times_{X'} X'') + \Dim(X') = \Dim(X) + \Dim(X'')$
     \item[\mylabel{item:cartesian_products_to_sums}{(b')}] \textbf{(additive w.r.t. products)} The dimension is additive under Cartesian products: $\Dim(X \times X') = \Dim(X) + \Dim(X')$
     \item[\mylabel{item:isomorphism}{(c)}] \textbf{(isomorphism invariant)} The dimension is invariant under isomorphism: Given $X \cong X'$, then $\Dim(X) = \Dim(X')$.
\end{enumerate}    
The topology and tangent structure of smooth manifolds also has implications for the dimension.
\begin{enumerate}
     \item[\mylabel{item:embeddings_inequalities}{(e)}] \textbf{(embeddings to inequalities)} The dimension sends embeddings to inequalities: Given an embedding $X \hookrightarrow X'$, $\Dim(X) \leq \Dim(X')$
     \item[\mylabel{item:locality}{(e')}] \textbf{(the same as locally)} The dimension is local: Given an open subset $U \hookrightarrow X$, $\Dim(U) = \Dim(X)$.
     \item[\mylabel{item:twice_dim}{(f)}] \textbf{(tangent bundle doubles dim)} Applying the tangent bundle endofunctor doubles the dimension: $\Dim(TX) = 2 \cdot \Dim(X)$.
     \item[\mylabel{item:differential_any_fiber}{(g)}] \textbf{(bundle fiber arbitrary dim)} A vector bundle can have a fiber of any dimension: For any $n \in \mathbb N$, there is a vector bundle $E \to X$ such that $\dim(E) = \dim (X) + n$.
 \end{enumerate}     
 \end{remark}

We will take property \ref{item:pullbacks} (additive w.r.t. pullbacks) as the definition of categorical dimensions. The reason we choose this condition is that we want to apply the properties of a dimension to the pullback diagram
\[\begin{tikzcd}
	{T_2X} && {T^2X} \\
	X && TX
	\arrow[from=1-1, to=1-3]
	\arrow[from=1-1, to=2-1]
	\arrow["\lrcorner"{anchor=center, pos=0.125}, draw=none, from=1-1, to=2-3]
	\arrow["{T(p_X)}", from=1-3, to=2-3]
	\arrow["{0_X}"', from=2-1, to=2-3]
\end{tikzcd}\]
encoding the universality of the vertical lift. We will replace the notion of transversality (which inherently depends on the tangent structure) with a more general notion, in order to define a dimension on a category without knowing the tangent structure. While we will not assume desired Property \ref{item:cartesian_products_to_sums} (additive w.r.t. products), however if the dimension of the terminal object is the neutral element $\dim(\ast)=0$, it follows from Property \ref{item:pullbacks} (additive w.r.t. pullbacks).

We generalize Property \ref{item:dim_is_natural_number} (valued in $\mathbb N$) by asking the target of the dimension to be a commutative monoid.  This is partially motivated by existing generalizations of dimension (such as infinite dimension and fractional dimension). We will provide examples in Sections \ref{sec:sets}--\ref{sec:cw} of a notion of dimension which satisfies the definition and is not an integer.

Since none of our results rely on it, we also do not require Property \ref{item:embeddings_inequalities} (embeddings to inequalities).
We also choose to not require property \ref{item:locality}(the same as locally) since we want to avoid using any notion of locality. 

Our definition of dimension will not be unique. For example, the constant assignment $\dim(X) = 0, \forall X$ will be a dimension and so is any other constant assignment $\dim(X) = 42, \forall X$.

As a convention, we denote the classical notion of dimension of manifolds and vector spaces as $\Dim$ with a capital D, whereas we denote the generalized dimension as $\dim$ with lowercase d.
\subsection{Monoid valued dimensions}\label{subsec:monoid_dimension_def}
In this section we will define our categorical notion of dimension. The key will be how the dimension behaves under the pullbacks
\[\begin{tikzcd}
	{T_2M} & TM & {T_2M} & {T^2M} \\
	TM & M & M & TM
	\arrow[from=1-1, to=1-2]
	\arrow[from=1-1, to=2-1]
	\arrow["\lrcorner"{anchor=center, pos=0.125}, draw=none, from=1-1, to=2-2]
	\arrow["p", from=1-2, to=2-2]
	\arrow[from=1-3, to=1-4]
	\arrow[from=1-3, to=2-3]
	\arrow["\lrcorner"{anchor=center, pos=0.125}, draw=none, from=1-3, to=2-4]
	\arrow["{T(p)}", from=1-4, to=2-4]
	\arrow["p"', from=2-1, to=2-2]
	\arrow["0"', from=2-3, to=2-4]
\end{tikzcd}\]
which are the definition of $T_2M$ and the universality of the vertical lift. Thus we specify the behaviour for pullbacks of this form.

\begin{definition}\label{def:monoid-valued-dimension}
    Let $M$ be a commutative monoid and $\mathbb X$ be a category. Then an \textbf{$M$-valued dimension} on $\mathbb X$ is a function $\dim : \pi_0(\mathbb X) \to M$ from the set of isomorphism classes of objects of $\mathbb X$ to the monoid $M$, satisfying the following property:\\
        For any pullback 
    \[\begin{tikzcd}
    	{A \times_B C} & A \\
    	C & B
    	\arrow[from=1-1, to=1-2]
    	\arrow[from=1-1, to=2-1]
    	\arrow["{r}", from=1-2, to=2-2]
    	\arrow["{f}"', from=2-1, to=2-2]
    \end{tikzcd}\]
        of $f$ along a retraction $r$,  
    \begin{itemize}
        \item if $f$ is a retraction, or
        \item if $f$ is a section,
    \end{itemize} the equation
        $$
        \dim(A \times_B C) + \dim(B) = \dim(A) + \dim(C)
        $$
        holds.
\end{definition}
While we chose to loosen desired Property \ref{item:dim_is_natural_number} (valued in $\mathbb N$) of Remark \ref{rem:wishlist_dimension}, Property \ref{item:isomorphism} (isomorphism invariant) and a variation of Property \ref{item:pullbacks} (additive w.r.t. pullbacks) holds by definition.

In a category with products, the morphism to the terminal object, $\ast$,  is a retraction and thus any pullback over the terminal object is one of the pullbacks in Definition \ref{def:monoid-valued-dimension}. If the dimension of the terminal object satisfies $\dim(\ast) = 0$, we obtain $\dim(X \times Y) = \dim(X) + \dim(Y)$.  That is, if $\dim(\ast)=0$ property \ref{item:cartesian_products_to_sums} (additive w.r.t. products) of Remark \ref{rem:wishlist_dimension} holds.

We actively rejected to generalize property \ref{item:locality}(the same as locally) of Remark \ref{rem:wishlist_dimension} in order to disregard notions of locality. Since we, in general, do not have an order on the monoid $M$, property \ref{item:embeddings_inequalities} (embeddings to inequalities) does not hold in general. However we will see that it holds for all the examples we have listed since we have ordered monoids there.

Proving properties similar to \ref{item:twice_dim}(tangent bundle doubles dim) and \ref{item:differential_any_fiber}(bundle fiber arbitrary dim) of Remark \ref{rem:wishlist_dimension} will be the topic of Section \ref{subsec:tangent_and_dimension}.

In the following proposition we see that dimensions interact well with functors and in particular adjunctions.

\begin{proposition}
    Let $\mathbb X$ and $\mathbb Y$ be categories, let $\mathbb X$ have an $R$-valued dimension $\dim_\mathbb X: \pi_0(\mathbb X) \to R$. Let $F: \mathbb Y \to \mathbb X$ be a functor that preserves finite limits. Then the assignment that sends $Y \in \pi_0(\mathbb Y)$ to
    $$
    Y \mapsto \dim_\mathbb X(F(Y))
    $$
    is an $R$-valued dimension on $\mathbb Y$.
\end{proposition}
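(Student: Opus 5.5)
The plan is to check the two requirements of Definition \ref{def:monoid-valued-dimension} directly for the assignment $\dim_\mathbb Y(Y) := \dim_\mathbb X(F(Y))$.

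\textbf{Well-definedness.} We first check that this assignment gives a function on $\pi_0(\mathbb Y)$. Any functor sends an isomorphism $Y \cong Y'$ to an isomorphism $F(Y)\cong F(Y')$. Since $\dim_\mathbb X$ is defined on isomorphism classes, $\dim_\mathbb X(F(Y)) = \dim_\mathbb X(F(Y'))$. So $\dim_\mathbb Y$ descends to a function $\pi_0(\mathbb Y)\to R$.

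\textbf{Pullback condition.} Take a pullback square in $\mathbb Y$ of $f: C\to B$ along a retraction $r: A \to B$, where $f$ is either a retraction or a section.
\begin{enumerate}
\item Apply $F$ to the square. Because $F$ preserves finite limits, the image square is a pullback in $\mathbb X$, so $F(A\times_B C)\cong F(A)\times_{F(B)}F(C)$.
\item Check that the image square is again of the allowed shape. This is the one point that needs care, though it is immediate. Functors preserve retractions and sections: if $r\circ s = 1_B$, then $F(r)\circ F(s) = F(1_B) = 1_{F(B)}$. Hence $F(r)$ is a retraction. Likewise, $F(f)$ is a retraction if $f$ is one, and a section if $f$ is one.
\item Apply the defining property of $\dim_\mathbb X$ to the image square. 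This gives
$$\dim_\mathbb X(F(A\times_B C)) + \dim_\mathbb X(F(B)) = \dim_\mathbb X(F(A)) + \dim_\mathbb X(F(C)).$$
Here we use that $\dim_\mathbb X$ is invariant under the isomorphism $F(A\times_B C)\cong F(A)\times_{F(B)}F(C)$, since any choice of pullback object is isomorphic to any other.
\end{enumerate}
Rewriting this equation with $\dim_\mathbb Y$ gives exactly the required identity
$$\dim_\mathbb Y(A\times_B C) + \dim_\mathbb Y(B) = \dim_\mathbb Y(A) + \dim_\mathbb Y(C).$$

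No step is a real obstacle. The only subtle points are that the class of admissible squares (pullbacks along retractions of retractions or sections) is stable under $F$, and that the statement is independent of the chosen pullback representative. Preservation of pullbacks alone suffices, so the full finite-limit hypothesis is stronger than needed. In particular, the result applies to right adjoints, which preserve all limits.
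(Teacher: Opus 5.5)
Your proof is correct and follows the same route as the paper. Functors preserve sections and retractions, $F$ preserves pullbacks, so the image square is again admissible and the dimension formula for $\dim_\mathbb X$ transfers directly. You additionally check well-definedness on $\pi_0(\mathbb Y)$ and rightly rely only on the stated finite-limit hypothesis, whereas the paper's proof says ``since it is right-adjoint''.
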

\begin{proof}
    Functors preserve composition and identity, thus $F$ preserves sections and retractions.
    Since it is right-adjoint $F$ preserve limits, in particular pullbacks. Thus for any pullback in $\mathbb Y$ of a section or retraction along a retraction, the dimension formula still holds.
\end{proof}
Two important classes of examples of functors that preserve finite limits are full subcategories and right-adjoint functors.
\begin{corollary}\label{cor:adjunctions_full_subcats_and_dimensions}
    Let $\mathbb X$ and $\mathbb Y$ be categories, let $\mathbb X$ have an $R$-valued dimension $\dim_\mathbb X: \pi_0(\mathbb X) \to R$. 
    \begin{enumerate}[label = (\alph*)]
        \item Let $F: \mathbb Y \to \mathbb X$ be a functor which is a right adjoint. Then the assignment that sends $Y \in \pi_0(\mathbb Y)$ to
    $$
    Y \mapsto \dim_\mathbb X(F(Y))
    $$
    is an $R$-valued dimension on $\mathbb Y$.
    \item     Let $\mathbb Y$ be a full subcategory of $\mathbb X$. Let $\dim:  \pi_0(\mathbb X) \to R$ be a $R$-valued dimension on $\mathbb X$. Then $\dim|_{\Obj(\mathbb Y)}$ is an $R$-valued dimension on $\mathbb Y$.
    \end{enumerate}
\end{corollary}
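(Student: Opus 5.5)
Both parts will follow from the preceding Proposition. Its proof uses only that the functor $F$ preserves sections, retractions, and the pullbacks appearing in Definition \ref{def:monoid-valued-dimension}. So in each case I will exhibit a functor $F:\mathbb Y\to\mathbb X$ and check that it preserves these pullbacks.

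For (a), I would apply the Proposition to $F$ directly. A right adjoint preserves all limits that exist in its domain, in particular pullbacks. Any functor preserves composition and identities, so a section $s$ with $r\circ s=1$ is sent to a section $F(s)$ with $F(r)\circ F(s)=1$, and likewise for retractions. Hence a pullback square in $\mathbb Y$ of a section or retraction $f$ along a retraction $r$ is sent to a pullback square in $\mathbb X$ of the same type. The equation $\dim_\mathbb X(F(A\times_B C))+\dim_\mathbb X(F(B))=\dim_\mathbb X(F(A))+\dim_\mathbb X(F(C))$ is then an instance of the dimension axiom in $\mathbb X$. Isomorphism invariance is automatic because $F$ sends isomorphic objects to isomorphic objects, so the assignment descends to $\pi_0(\mathbb Y)$.

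For (b), I would take $F$ to be the inclusion $\iota:\mathbb Y\hookrightarrow\mathbb X$, so that $\dim_\mathbb X\circ\iota=\dim|_{\Obj(\mathbb Y)}$, and proceed exactly as in (a). Fullness is used twice.
\begin{enumerate}
\item The map on isomorphism classes is well defined and injective: two objects of $\mathbb Y$ are isomorphic in $\mathbb Y$ if and only if they are isomorphic in $\mathbb X$.
\item Sections and retractions in $\mathbb Y$ are exactly the morphisms between objects of $\mathbb Y$ that are sections and retractions in $\mathbb X$, since their one-sided inverses already lie in $\mathbb Y$.
\end{enumerate}
The remaining ingredient is that a square in $\mathbb Y$ which is a pullback in $\mathbb Y$ is still a pullback in $\mathbb X$. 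This is the step the paper invokes when it lists full subcategories among the functors preserving finite limits, and I would cite that remark.

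I expect this last step to be the main obstacle. A fully faithful inclusion only reflects limits; it does not preserve them in general. A cone over $C\to B\leftarrow A$ with vertex outside $\mathbb Y$ need not factor through the $\mathbb Y$-pullback. My first attempt would exploit the splittings: the section $s$ of $r$ (and the splitting of $f$) produce comparison maps into $A\times_B C$ that are built only from morphisms of $\mathbb Y$. I would try to show that, for these specific split cospans, the $\mathbb X$-pullback (when it exists) is a retract of an object of $\mathbb Y$ and is therefore isomorphic to the $\mathbb Y$-pullback.

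If that fails, the honest fallback is to read the pullbacks in Definition \ref{def:monoid-valued-dimension} for $\mathbb Y$ as those squares in $\mathbb Y$ that are pullbacks in the ambient category, as the paper does implicitly. With that reading the corollary follows verbatim from the Proposition.
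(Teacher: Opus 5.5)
Your part (a) is correct and is the paper's argument: a right adjoint preserves every pullback that exists in $\mathbb Y$, every functor preserves split monos and split epis, and the formula is then an instance of the axiom in $\mathbb X$. For (b), however, the proposal does not contain a proof. The paper's entire justification is the remark you intend to cite, that inclusions of full subcategories preserve finite limits. You are right that this is false, and the gap cannot be closed, because (b) itself fails as stated.

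Here is a counterexample. Take $\mathbb X=\Grp$ with the $(\mathbb N_\infty,\cdot)$-valued cardinality, and let $\mathbb Y$ be the full subcategory of groups isomorphic to $1$, $\mathbb Z/2$ or $S_3$. In $\mathbb Y$ the sign map $S_3\to\mathbb Z/2$ is a retraction, and $1\to\mathbb Z/2$ is a section. A cone over this cospan with vertex $Y'\in\mathbb Y$ is a homomorphism $Y'\to A_3\cong\mathbb Z/3$. Only the trivial one exists: for $S_3$ this is because $S_3^{\mathrm{ab}}\cong\mathbb Z/2$. So the pullback in $\mathbb Y$ is the trivial group, and $\#1\cdot\#(\mathbb Z/2)=2\neq 6=\#S_3\cdot\#1$.

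Your first repair breaks exactly here. The $\mathbb X$-pullback $A_3$ is not a retract of any object of $\mathbb Y$, since every homomorphism $S_3\to\mathbb Z/3$ is trivial. The repair also says nothing about split cospans whose pullback exists in $\mathbb Y$ but not in $\mathbb X$, where $\dim_{\mathbb X}$ imposes no constraint at all.

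Your retract criterion is nonetheless sound as a sufficient condition. Suppose $ei=1_P$ with $Z\in\mathbb Y$, and let $q:Q\to P$ be the comparison from the $\mathbb Y$-pullback. Factoring the cone $e$ through $Q$ as $e=qu$ gives $q(ui)=1_P$, and uniqueness of factorizations through $Q$ gives $uiq=1_Q$.

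For split cospans in an additive ambient category such as $\Mod_R$ the criterion always applies. There $P$ is a retract of $Q$ via $u\pi_A+v\pi_C$, where $qu=\langle 1_A-sr,\,0\rangle$ and $qv=\langle sf,\,1_C\rangle$, using $r\pi_A=f\pi_C$. So (b) does hold there.

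In general, a true version of (b) needs an extra hypothesis. One option is that $\mathbb X$ is additive. Another is that $\mathbb Y$ is closed in $\mathbb X$ under the pullbacks of Definition \ref{def:monoid-valued-dimension}, as for $\FinGrp\subset\Grp$ and $\FinSet^{\mathrm{op}}\subset\Set^{\mathrm{op}}$. Your fallback, counting only squares that are pullbacks in $\mathbb X$, changes the definition of a dimension on $\mathbb Y$ rather than proving the stated corollary.
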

Since many adjunctions are well known from the literature, this inconspicuous corollary is a useful way to construct new dimensions from existing ones and will be frequently used throughout the paper.
One major application of Corollary \ref{cor:adjunctions_full_subcats_and_dimensions} will be in Section \ref{subsec:Alg_R}. There we will show that the forgetful functor from algebras over a ring $R$ to modules over $R$ has a left-adjoint and thus a dimension on the category of modules will induce a dimension on the category of Algebras.

Recall that in a tangent category $(\mathbb X, T)$,  a pullback is called a $T$-pullback if it is preserved by all powers of $T$. $T$-pullbacks were first developed by McAdam in \cite[Definition 1.1.3]{McAdam_arxiv} and later analyzed by Marcello Lanfranchi and Cruttwell in \cite{lanfranchi2025tangentdisplaymaps}.
We will use the definition of a $T$-pullback to define a slightly weaker version of dimension and a slightly stronger version of dimension. These definitions will be the setup of Theorem \ref{thm:dimension_result_weak} and Theorem \ref{thm:dimension_argument_strong}.
\begin{definition}\label{def:tangent_dimension}
    Let $M$ be a commutative monoid and $(\mathbb X , T)$ be a tangent category. Then an \textbf{$M$-valued tangent dimension} on $(\mathbb X, T)$ is a function $\dim : \pi_0(\mathbb X) \to M$ satisfying the following property:\\
    For any $T$-pullback 
    \[\begin{tikzcd}
    	{A \times_B C} & A \\
    	C & B
    	\arrow[from=1-1, to=1-2]
    	\arrow[from=1-1, to=2-1]
    	\arrow["{r}", from=1-2, to=2-2]
    	\arrow["{f}"', from=2-1, to=2-2]
    \end{tikzcd}\]
        along a retraction $r$,
        \begin{itemize}
        \item if $f$ is a retraction, or
        \item if $f$ is a section,
        \end{itemize} the equation
        $$
        \dim(A \times_B C) + \dim(B) = \dim(A) + \dim(C)
        $$
        holds.
    An $M$-valued tangent dimension is \textbf{strong} if $M$ is a commutative rig and there is an element $a \in \mathbb M$ such that for every object $X \in \mathrm{Ob}(\mathbb X)$ the equation
    $$
    \dim (T(X)) = a \cdot \dim(X)
    $$
    holds.
\end{definition}
Since every T-pullback is a pullback, it is immediate that every $M$-valued dimension is an $M$-valued tangent dimension (regardless of the tangent structure) but not vice versa. 

Again, $M$-valued tangent dimensions satisfy Properties \ref{item:pullbacks} (additive w.r.t. pullbacks), \ref{item:isomorphism} (isomorphism invariant) of Remark \ref{rem:wishlist_dimension}. We will explore in the next sections what the properties corresponding to \ref{item:twice_dim} (tangent bundle doubles dim) and \ref{item:differential_any_fiber} (bundle fiber arbitrary dim) are.

\begin{example}
    Given any tangent category $(\mathbb X, T, p, 0, + ,\ell, c)$ and any commutative monoid $M$, let $m \in M$ be any element of $M$. Then the constant assignment $\dim(X)= m$ is a $M$-valued dimension and a strong $M$-valued tangent dimension. All the equations become trivial in this situation. We will call dimensions of this form \textbf{trivial dimensions} on $\mathbb X$.
\end{example}

We explore the Properties of $M$-valued tangent dimensions in Theorems 3.7 and 3.8.

\subsection{The tangent structure in the presence of a dimension}\label{subsec:tangent_and_dimension} 

If we have a tangent dimension on a category, it interacts with the tangent structure since the tangent structure requires certain diagrams to be pullbacks. We see in Theorem \ref{thm:dimension_argument_strong} that in certain situations the dimension of the tangent bundle behaves in a way similar to desirable Property \ref{item:twice_dim}(tangent bundle doubles dim) of Remark \ref{rem:wishlist_dimension}.

\begin{theorem}\label{thm:dimension_result_weak}
    Let $M$ be a commutative monoid.
    Let $(\mathbb X , T)$ be a tangent category. Let
    $\dim: \mathrm{Ob}(\mathbb X) \to M$ be a $M$-valued tangent dimension. Then for every object $X \in \mathrm{Ob}(\mathbb X)$
        $$
        \dim(T^2(X)) + 2 \cdot \dim (X) = 3 \cdot \dim(T(X)).
        $$
    \end{theorem}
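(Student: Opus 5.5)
We apply the defining property of an $M$-valued tangent dimension (Definition \ref{def:tangent_dimension}) to two tangent pullbacks and combine the resulting equations in $M$. Since $M$ is only a commutative monoid, we cannot subtract. The plan is therefore to produce two equations that share the term $\dim(T_2X)$ on the same side, and to add suitable copies of $\dim$-values to both so that $\dim(T_2X)$ appears symmetrically and the desired identity falls out.

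\textbf{Step 1 (the foundational pullback).} The pullback $T_2X = TX\times_X TX$ of $p_X$ along $p_X$ is a $T$-pullback, since by Definition \ref{def:tangent_cat} $T$ preserves pullback powers of $p$, hence so do all powers $T^n$. Moreover $p_X$ is a retraction, with section $0_X$ because $p_X\circ 0_X = 1_X$. So we are in the case ``$f$ a retraction'', and we obtain
\[
\dim(T_2X) + \dim(X) = 2\cdot\dim(TX).
\]

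\textbf{Step 2 (universality of the vertical lift).} Diagram \eqref{diag:universality_tangent_structure} is a pullback of $0_X$ along $T(p_X)$, and it is preserved by powers of $T$, so it is a $T$-pullback. Here $T(p_X)$ is a retraction, with section $T(0_X)$ by functoriality. Also $0_X$ is a section, of $p_X$. Hence we are in the case ``$f$ a section'', and
\[
\dim(T_2X) + \dim(TX) = \dim(T^2X) + \dim(X).
\]

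\textbf{Step 3 (combining without subtraction).} Add $\dim(X)$ to both sides of the equation from Step 2:
\[
\dim(T_2X)+\dim(X)+\dim(TX) = \dim(T^2X)+2\cdot\dim(X).
\]
The left-hand side contains $\dim(T_2X)+\dim(X)$, which by Step 1 equals $2\cdot\dim(TX)$. So the left-hand side becomes $3\cdot\dim(TX)$, which gives $\dim(T^2X)+2\cdot\dim(X)=3\cdot\dim(TX)$. This uses only substitution of equal elements and associativity and commutativity in $M$; no cancellation is needed.

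The only point needing care is checking that both diagrams satisfy the hypotheses of the definition: that each is genuinely a $T$-pullback, and that the maps have the required section/retraction status. For Step 2 this relies on the formulation of Definition \ref{def:tangent_cat}, which states that the universality pullback is preserved by powers of $T$. The monoid arithmetic itself is routine once the summands are arranged as above.
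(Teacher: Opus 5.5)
Your proposal is correct and is essentially the paper's proof: it uses the same two squares ($T_2X$ as the pullback of $p_X$ along itself, and the universality of the vertical lift), obtains the same two equations, and combines them by the same cancellation-free step of adding $\dim(X)$ to the second equation and substituting $\dim(T_2X)+\dim(X)=2\cdot\dim(TX)$. Your explicit check that both squares are $T$-pullbacks with the right section/retraction data is slightly more careful than the paper, which cites Definition \ref{def:monoid-valued-dimension} rather than Definition \ref{def:tangent_dimension}; note only that preservation of $T_2X$ by all $T^n$ does not follow literally from $T$ preserving pullback powers of $p$ alone, but is an axiom in Cockett--Cruttwell's formulation and also follows from $T(p_X)=p_{TX}\circ c_X$ with $c_X$ invertible.
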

    \begin{proof}
    We consider the following two pullback diagrams:
\[\begin{tikzcd}
	{T_2X} & TX && {T_2X} && {T^2X} \\
	TX & X && X && TX
	\arrow[from=1-1, to=1-2]
	\arrow[from=1-1, to=2-1]
	\arrow["\lrcorner"{anchor=center, pos=0.125}, draw=none, from=1-1, to=2-2]
	\arrow["p", from=1-2, to=2-2]
	\arrow["{\nu}", from=1-4, to=1-6]
	\arrow["{\pi_0 p}"', from=1-4, to=2-4]
	\arrow["\lrcorner"{anchor=center, pos=0.125}, draw=none, from=1-4, to=2-6]
	\arrow["{T(p)}", from=1-6, to=2-6]
	\arrow["p"', from=2-1, to=2-2]
	\arrow["0"', from=2-4, to=2-6]
\end{tikzcd}\]
where $\nu$ is a shorthand notation for $\nu = T(+) \circ \langle \ell_M \circ \pi_0 , 0_{TM} \circ \pi_1 \rangle$.
The first diagram is a pullback since this is the definition of $T_2M$. The second diagram is a pullback, because this is an equivalent formulation for the universality of the vertical lift.

Since $0:X \to T(X)$ and $p: T(X) \to X$ form a section retraction pair, the pullbacks above fulfill the requirements in Definition \ref{def:monoid-valued-dimension} we obtain that
$$
\dim(T_2(X)) + \dim(X) = 2 \dim(T(X)) \text{ and } \dim(T_2(X)) + \dim(T(X)) = \dim(T^2(X)) + \dim (X).
$$
Therefore we obtain 
$$
2 \cdot \dim(T(X)) + \dim(T(X)) = \dim(T_2(X)) + \dim(X) + \dim(T(X)) = \dim(T^2(X)) + \dim(X) + \dim(X).
$$
\end{proof}

\begin{theorem}\label{thm:dimension_argument_strong}
Let $R$ be an integral domain (unital commutative ring without zero divisors).
    Let $(\mathbb X , T)$ be a tangent category. Let
$\dim: \mathrm{Ob}(\mathbb X) \to R$ be a strong $R$-valued tangent dimension on $(\mathbb X,T)$.
Then for any object $X$, $\dim(T(X)) = \dim(X)$ or $\dim(T(X)) = 2 \cdot \dim(X)$ holds.
\end{theorem}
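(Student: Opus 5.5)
The plan is to specialize Theorem \ref{thm:dimension_result_weak} to the strong case and then solve the resulting polynomial equation in the integral domain $R$. Since $\dim$ is a strong $R$-valued tangent dimension, there is $a \in R$ with $\dim(T(Y)) = a\cdot \dim(Y)$ for every object $Y$. Applying this to $Y = X$ and then to $Y = T(X)$ gives $\dim(T^2(X)) = a\cdot\dim(T(X)) = a^2 \dim(X)$. Every strong tangent dimension is in particular a tangent dimension, so Theorem \ref{thm:dimension_result_weak} applies. Here the monoid is the additive monoid of $R$, and $n\cdot m$ means the $n$-fold sum, which agrees with multiplication by $n\cdot 1_R$ in $R$.

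Substituting into $\dim(T^2(X)) + 2\dim(X) = 3\dim(T(X))$ gives
\[
a^2\dim(X) + 2\dim(X) = 3a\dim(X),
\]
that is, $(a^2 - 3a + 2)\dim(X) = 0$. Because $R$ is a ring we may subtract, and the left-hand side factors as $(a-1)(a-2)\dim(X) = 0$. Since $R$ has no zero divisors, one of $a = 1$, $a = 2$, or $\dim(X) = 0$ holds.

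For the case split: if $a=1$, then $\dim(T(X)) = \dim(X)$. If $a = 2$, then $\dim(T(X)) = 2\dim(X)$. If $\dim(X) = 0$, then $\dim(T(X)) = a\cdot 0 = 0 = \dim(X)$, so the first alternative holds again. In every case one of the two claimed equalities holds. In fact the argument gives slightly more: $a \in \{1,2\}$ unless every object has dimension zero.

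The only delicate point is bookkeeping rather than a genuine obstacle. The strong condition is stated for all objects, including $T(X)$, which is what lets us compute $\dim(T^2X)$. We also need the additive monoid structure on $R$ to be the one used in Theorem \ref{thm:dimension_result_weak}, so that the integer coefficients $2$ and $3$ become ring elements and we can cancel by passing to a difference. Cancellation genuinely uses the ring (not just rig) structure and the absence of zero divisors; in a general rig this step could fail.
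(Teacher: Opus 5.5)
Your proposal is correct and follows essentially the same route as the paper. The paper re-derives $\dim(T^2X) - 3\dim(TX) + 2\dim(X) = 0$ directly from the two tangent pullbacks rather than citing Theorem \ref{thm:dimension_result_weak}; it then substitutes $\dim(TX) = a\dim(X)$ and $\dim(T^2X) = a^2\dim(X)$, factors as $(a-1)(a-2)\dim(X) = 0$, and uses the absence of zero divisors, including the case $\dim(X)=0$, exactly as you do.
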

If there exists an object with $\dim(X) \neq 0$ this means $a=1$ or $a=2$.
\begin{proof}
    We consider the following two pullback diagrams:
\[\begin{tikzcd}
	{T_2X} & TX && {T_2X} && {T^2X} \\
	TX & X && X && TX
	\arrow[from=1-1, to=1-2]
	\arrow[from=1-1, to=2-1]
	\arrow["\lrcorner"{anchor=center, pos=0.125}, draw=none, from=1-1, to=2-2]
	\arrow["p", from=1-2, to=2-2]
	\arrow["{\nu}", from=1-4, to=1-6]
	\arrow["{\pi_0 p}"', from=1-4, to=2-4]
	\arrow["\lrcorner"{anchor=center, pos=0.125}, draw=none, from=1-4, to=2-6]
	\arrow["{T(p)}", from=1-6, to=2-6]
	\arrow["p"', from=2-1, to=2-2]
	\arrow["0"', from=2-4, to=2-6]
\end{tikzcd}\]
where $\nu$ is a shorthand notation for $\nu = T(+) \circ \langle \ell_M \circ \pi_0 , 0_{TX} \circ \pi_1 \rangle$.

The first diagram is a pullback since this is the definition of $T_2M$. The second diagram is a pullback, because this is an equivalent formulation for the universality of the vertical lift. Therefore we obtain that
$$
\dim(T(X)) + \dim(T(X))- \dim(X) = \dim(T_2X) = \dim(T^2X) + \dim(X) - \dim(TX).
$$
This can be reformulated into
$$
\dim(T^2 X ) - 3 \dim(TX) + 2 \dim(X) = 0.
$$
Using that  $\dim(TX) = a \cdot \dim(X)$, we obtain the equation
\begin{align*}
    0 &= a^2 \cdot \dim(X) -3a \cdot \dim(X) +2 \dim(X)
    \\
    0 &= (a-1) \cdot (a-2) \cdot \dim(X)
\end{align*}
This implies $a=1$, $a=2$ or $\dim(X)=0$. If $a=1$, then $\dim(TX) = \dim(X)$. If $a=2$, then $\dim(TX) = 2 \cdot \dim(X)$. If $\dim(X) = 0$, both hold.
\end{proof}

Theorem \ref{thm:dimension_argument_strong} is the analogue to Property (g)

\begin{example}\label{ex:manifolds}
    In the tangent category of smooth manifolds, the classical dimension $\dim(M) = \Dim(M)$  is an example for a $\mathbb Z$-valued strong tangent dimension. If the retraction $r: A \to B$ is a submersion, it is transversal to any smooth map $C \to B$. Thus, by \cite[Theorem 15.2]{brendon_topo_and_geo}, the pullback fulfills $\dim (A \times_B C) = \dim(A) + \dim(C)- \dim(B) $. Therefore Theorem \ref{thm:dimension_argument_strong} shows that for any tangent structure on SmMan with $\dim(TM) = a \cdot \dim(M)$, we obtain that $a=1$ or $a=2$. We prove the general statement that the classical dimension $\Dim$ fulfills Definition \ref{def:monoid-valued-dimension} in Proposition \ref{prop:dimension_is_dimension}, where we do not assume that $r$ is a submersion.
\end{example}
One important application of Theorem \ref{thm:dimension_result_weak} and Theorem \ref{thm:dimension_argument_strong} is that they provide a method to prove that some functor can not be part of a tangent structure. If a category has a good notion of dimension, but the proposed tangent functor fails to satisfy the conditions of Theorem \ref{thm:dimension_result_weak} and Theorem \ref{thm:dimension_argument_strong}, then the functor can not be a tangent functor. The following example, discovered by G. Vooys \cite{geoff_peripatetic}, showcases this application.
\begin{example}
    Let $\operatorname{SmMan}$ be the category of smooth real manifolds and smooth maps. Given the functor $\tilde T : \operatorname{SmMan} \to \operatorname{SmMan}$ that sends an object $M$ to the object $M \times \mathbb R$ and a morphism $f$ to $f \times 1_\mathbb R$. Then there is no tangent structure $(\operatorname{SmMan}, \hat T,p,+,0,\ell)$ for which the tangent functor is $\tilde T$. The reason is that according to Theorem \ref{thm:dimension_result_weak} the equation 
    $$
    \dim (M \times I \times I ) + 2 \cdot \dim(M) = 3 \cdot \dim (M \times I)
    $$
    would need to hold. However, this would mean that
    $$
    3 \cdot \dim(M) + 2 = 3 \cdot \dim (M) +3
    $$
    and thereby $3=2$ proving by contradiction that there is no such tangent structure.
\end{example}
In fact, for this endofunctor $\tilde T$, it is possible to define natural transformations $p,0,+,\ell, c$ that fulfill all the properties of a tangent structure, except the universality of the vertical lift.  

As we promised in Example \ref{ex:manifolds}, below we prove that the dimension of manifolds fulfills the pullback formula of Definition \ref{def:monoid-valued-dimension}. We begin with a lemma turning a general retraction into a submersion.

\begin{lemma} \label{lem:submersion_subset}
    Let $r: A \to B$ be a retraction with section $s: B \to A$. Then there is an open subset $U \subseteq A$ such that $\mathrm{Im}(s) \subset U$ and $r|_U$ is submersion.
\end{lemma}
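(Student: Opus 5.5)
The plan is to differentiate the identity $r\circ s = 1_B$ and then use openness of the submersion condition. Here $r: A \to B$ and $s: B \to A$ are smooth maps between smooth manifolds with $r \circ s = 1_B$. By the chain rule, for every $b \in B$ we have
$$
d r_{s(b)} \circ d s_b = 1_{T_b B}.
$$
So $dr_{s(b)}: T_{s(b)}A \to T_b B$ is surjective, which means $r$ is a submersion at every point of $\mathrm{Im}(s)$.

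Next I show that the set of points where $r$ is a submersion is open. Let
$$
U = \{ a \in A \mid dr_a \text{ is surjective} \}.
$$
Fix $a \in U$, and choose charts around $a$ and around $r(a)$. In these charts, $dr$ is represented by a Jacobian matrix whose entries depend continuously on the point. Surjectivity at $a$ means that some $\dim(B)\times\dim(B)$ minor of this matrix is nonzero at $a$. By continuity, the same minor is nonzero on a neighbourhood of $a$. Hence $U$ is open in $A$.

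It remains to check the restriction. By the first step, $\mathrm{Im}(s) \subseteq U$. Since $U$ is open, it is a submanifold of $A$ with $T_aU = T_aA$ for every $a \in U$. Therefore $d(r|_U)_a = dr_a$, which is surjective, and $r|_U$ is a submersion.

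The only point needing care is dimension bookkeeping if $A$ or $B$ is allowed to have components of different dimensions. The openness argument is local, so I would run it on each connected component separately, where the dimensions are constant. No other obstacle is expected; the main step is the openness of the submersion locus via the lower semicontinuity of rank.
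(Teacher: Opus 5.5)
Your proposal is correct and takes essentially the same route as the paper. You let $U$ be the open set of points where $dr$ is surjective, and you apply the chain rule to $r\circ s = 1_B$ to get $\mathrm{Im}(s)\subseteq U$. The only difference is that you prove openness of this regular locus directly, via a nonvanishing Jacobian minor, where the paper cites Proposition 4.1 of Lee's \emph{Introduction to Smooth Manifolds}.
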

The fact that $\mathrm{Im}(s) \subset U$ simultaneously shows that $U$ is non-empty and also that $s$ is a section of $r|_U$.
\begin{proof}
    Let $U \subset A$ be the set of regular points in $A$. 
    Due to \cite[Proposition 4.1]{Leee_intro_smooth_manifolds}, $U$ is open. By its definition, all points in $U$ are regular, therefore $r|_U$ is a submersion. 
    
    It only remains to show that $\mathrm{Im}(s) \subset U$.  Let $a = s(b)$ be in $\mathrm{Im}(s) \subset A$. In order to show that $a$ is a regular point, let $v \in T_b(B)$ be an arbitrary tangent vector. Then $T_bs (v) \in $ is a tangent vector at $a$ fulfilling that 
    $$
    T_ar \circ T_bs(v) = T_a(r \circ s)(v) = v
    $$
    Since $v$ is arbitrary, $a$ is a regular point. Since $a \in \mathrm{Im}(s)$ was arbitrary, this shows that $\mathrm{Im}(s) \subset U$, concluding the proof.
\end{proof}

\begin{proposition}\label{prop:dimension_is_dimension}
    Let $r: A \to B $ be a retraction of smooth manifolds. Suppose the pullback
    \[\begin{tikzcd}
    	{A \times_B C} & A \\
    	C & B
    	\arrow["{\pi_0}", from=1-1, to=1-2]
    	\arrow["{\pi_1}"', from=1-1, to=2-1]
    	\arrow["\lrcorner"{anchor=center, pos=0.125}, draw=none, from=1-1, to=2-2]
    	\arrow["r", from=1-2, to=2-2]
    	\arrow["f"', from=2-1, to=2-2]
    \end{tikzcd}\]
    exists. Then the equation
    $$
    \dim(A \times_B C) = \dim(A) + \dim(C)- \dim(B)
    $$
    holds.
\end{proposition}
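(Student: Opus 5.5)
The plan is to reduce to the transversal case of Example \ref{ex:manifolds}, using Lemma \ref{lem:submersion_subset} together with the locality of the classical dimension (property \ref{item:locality} of Remark \ref{rem:wishlist_dimension}). Throughout, manifolds are assumed to have pure dimension, so that any nonempty open subset has the same dimension as the whole manifold. Write $P = A \times_B C$ for the given categorical pullback in $\SmMan$, with projections $\pi_0, \pi_1$, and let $s$ be a section of $r$.

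First, apply Lemma \ref{lem:submersion_subset} to obtain an open $U \subseteq A$ with $\mathrm{Im}(s) \subset U$ and $r|_U$ a submersion. Since $U$ is open and nonempty, $\dim(U) = \dim(A)$. Because $r|_U$ is a submersion, it is transversal to $f$. Hence the set-theoretic fibre product $U \times_B C$ is an embedded submanifold of $U \times C$ of dimension $\dim(A) + \dim(C) - \dim(B)$ by \cite[Theorem 15.2]{brendon_topo_and_geo}. It is also a pullback of $r|_U$ along $f$ in $\SmMan$: a smooth map into $U \times C$ landing in an embedded submanifold is smooth into that submanifold.

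Second, relate $U \times_B C$ to $P$. Consider the open subset $V = \pi_0^{-1}(U) \subseteq P$. The key claim is that $V$, with the restricted projections, is a pullback of $r|_U$ along $f$ in $\SmMan$. Take smooth maps $g: Z \to U$ and $h: Z \to C$ with $r g = f h$. Composing $g$ with the inclusion $U \hookrightarrow A$ gives, by the universal property of $P$, a unique $Z \to P$. Its composite with $\pi_0$ is $g$, so it lands in the open set $V$, and hence it factors smoothly and uniquely through $V$. Since pullbacks are unique up to isomorphism, $V \cong U \times_B C$, and so $\dim(V) = \dim(A) + \dim(C) - \dim(B)$.

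Third, conclude that $\dim(P) = \dim(V)$ by checking that $V$ is nonempty whenever $C$ is. The pair $(s f, 1_C)$ satisfies $r s f = f$, so it induces a map $\sigma = \langle s f, 1_C \rangle : C \to P$. We have $\pi_0 \sigma = s f$, which lands in $\mathrm{Im}(s) \subset U$, so $\sigma$ lands in $V$ and $V \neq \emptyset$. Being a nonempty open subset of $P$, $V$ satisfies $\dim(V) = \dim(P)$, which gives the desired formula. When $C = \emptyset$, $P$ is empty as well, and the formula is handled by the paper's convention on the dimension of the empty manifold; this should be noted separately.

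I expect the main obstacle to be the second step, together with the standing assumptions it relies on. The categorical pullback $P$ in $\SmMan$ need not be the set-theoretic fibre product, so nothing about $P$ can be read off pointwise; only its universal property can be used. The argument that open preimages of pullbacks are pullbacks of the restricted maps must therefore be done carefully. Likewise, the locality step requires pure dimension (or a component-wise reading of $\Dim$), and I would state this assumption explicitly.
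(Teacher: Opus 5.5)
Your proof is correct and follows the paper's argument. You use Lemma \ref{lem:submersion_subset} to get $U$, show that the open preimage $\pi_0^{-1}(U)$ is a pullback of $f$ along the submersion $r|_U$, and then apply locality of dimension; the paper gets the pullback property by pasting the pullback of the open inclusion $U \hookrightarrow A$ along $\pi_0$ onto the given square, which is exactly your universal-property check. Your extra points are gaps the paper leaves implicit, and stating them is an improvement: nonemptiness of $\pi_0^{-1}(U)$ via $\langle s f, 1_C\rangle$, the pure-dimension assumption, and the degenerate case $C=\emptyset$.
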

\begin{proof}
    Due to Lemma \ref{lem:submersion_subset}, there is an open subset $U \subset A$ containing $\mathrm{Im}(s)$ such that $r|_U$ is a submersion. Since the embedding $U \hookrightarrow A$ is a submersion, the pullback 
    \[\begin{tikzcd}
    	{\pi_0^{-1}(U)} & U \\
    	{A \times_B C} & A
    	\arrow[from=1-1, to=1-2]
    	\arrow[from=1-1, to=2-1]
    	\arrow["\lrcorner"{anchor=center, pos=0.125}, draw=none, from=1-1, to=2-2]
    	\arrow[hook, from=1-2, to=2-2]
    	\arrow["{\pi_0}"', from=2-1, to=2-2]
    \end{tikzcd}\]
    exists and euqals the preimage $\pi_0^{-1}(U)$. As the preimage of an open set,  $\pi_0^{-1}(U)$ is an open subset of $A \times_B C$.

    Both inner squares in 
\[\begin{tikzcd}
	{\pi_0^{-1}(U)} & U \\
	{A \times_B C} & A \\
	C & B
	\arrow[from=1-1, to=1-2]
	\arrow[from=1-1, to=2-1]
	\arrow["\lrcorner"{anchor=center, pos=0.125}, draw=none, from=1-1, to=2-2]
	\arrow[hook, from=1-2, to=2-2]
	\arrow["{\pi_0}"', from=2-1, to=2-2]
	\arrow[from=2-1, to=3-1]
	\arrow["\lrcorner"{anchor=center, pos=0.125}, draw=none, from=2-1, to=3-2]
	\arrow["r", from=2-2, to=3-2]
	\arrow["f"', from=3-1, to=3-2]
\end{tikzcd}\]
    are pullbacks. Therefore the outer square is the pullback of $f$ along $r \circ i$. Since $r \circ i = r |_U$ is a submersion, the dimension formula
    $$
    \dim (\pi_0^{-1}(U)) = \dim(U) + \dim(C) - \dim(B)
    $$
    holds. Since the dimension of an open subset equals the dimension of the surrounding manifold, $\dim (\pi_0^{-1}(U)) = \dim(A \times_B C)$ and $\dim(U) = \dim(A)$. Therefore we obtain
    $$
    \dim(A \times_B C) = \dim(\pi_0^{-1}(U)) = \dim(U) + \dim(C) - \dim(B) = \dim(A) + \dim(C) - \dim(B)
    $$
    which concludes the proof.
\end{proof}

\subsection{Differential bundles in the presence of a dimension}\label{subsec:diff_bund_when_dimension}

For tangent structures, the dimension helps to give context to the universality of the vertical lift.  Similarly, the pullback in Diagram \ref{diagram:universality_diffbun} is the vertical lift for differential bundles.  In this section, we prove the analogues of Theorems \ref{thm:dimension_result_weak} and \ref{thm:dimension_argument_strong} for differential bundles.

In the classical case of smooth manifolds, desirable Property \ref{item:differential_any_fiber}(bundle fiber arbitrary dim) of Remark \ref{rem:wishlist_dimension} says that there is no restriction on the dimension of a differential bundle.  However, in certain non-classical circumstances there is. This will lead to a somewhat strange genaralization of desirable Property \ref{item:differential_any_fiber}(bundle fiber arbitrary dim) of Remark \ref{rem:wishlist_dimension} in Theorem \ref{thm:DiffBun_dimension_argument_strong}.

\begin{theorem}\label{thm:DiffBun_dimension_argument_weak}
    Let $M$ be a commutative monoid and let $\dim : \pi_0(X) \to M$ be a tangent dimension on a tangent category $(\mathbb X, T, p,0,+,\ell, c)$. Let $(E, M ,q, \zeta, \sigma, \lambda)$ be a differential bundle. Then the equation
    $$
    \dim(T(E)) + 2 \cdot \dim(M) = \dim(T(M)) + 2 \cdot \dim(E)
    $$
    holds.
\end{theorem}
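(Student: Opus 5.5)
We mirror the proof of Theorem \ref{thm:dimension_result_weak}, replacing the tangent bundle $p_X: TX \to X$ with the differential bundle $q: E \to M$. We apply the tangent dimension to two pullbacks, each of a section or retraction along a retraction, and then eliminate $\dim(E_2)$.

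The first pullback is the one defining $E_2$. It is the pullback of $q$ along $q$, and $q$ is a retraction with section $\zeta$, since $(q,\zeta,\sigma)$ is an additive bundle and so $q \circ \zeta = 1_M$. Definition \ref{def:differential bundle} requires these pullback powers to be preserved by all $T^k$, so this is a $T$-pullback. The dimension axiom then gives
$$
\dim(E_2) + \dim(M) = 2 \cdot \dim(E).
$$

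The second pullback is the universality square (\ref{diagram:universality_diffbun}): $E_2$ is the pullback of $T(q): TE \to TM$ along $0_M: M \to TM$. Here $T(q)$ is a retraction with section $T(\zeta)$, by functoriality, and $0_M$ is a section of $p_M$. The dimension axiom then gives
$$
\dim(E_2) + \dim(TM) = \dim(TE) + \dim(M).
$$
Adding $\dim(M)$ to the second equation and substituting the first yields
$$
\dim(TE) + 2\dim(M) = \dim(E_2) + \dim(M) + \dim(TM) = 2\dim(E) + \dim(TM).
$$
This argument needs no cancellation, which is important because $M$ is only a monoid.

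The main obstacle is to check that the universality square (\ref{diagram:universality_diffbun}) is a $T$-pullback, and not merely a pullback, since a tangent dimension only applies to $T$-pullbacks. The definition of a differential bundle as recalled only states that this square is a pullback. I would first try to appeal to the standard result of \cite{cockett2016diffbundles}, which says that the universality pullback of a differential bundle is automatically preserved by each $T^n$. Alternatively, I would derive this from the fact that $T$ preserves the pullback powers $E_n$ and from the $T$-pullback property of the vertical lift. If neither works, the fallback is to interpret the statement as assuming that the lift's universality square is a $T$-pullback, as in the tangent-category case.
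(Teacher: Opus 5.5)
Your proof is the paper's proof: it uses the same two squares (the one defining $E_2$, and the universality square of the lift), the same two instances of the dimension axiom, and the same cancellation-free way of combining them. The $T$-pullback issue you flag for the universality square is genuine, and the paper's proof passes over it silently: it invokes the axiom of Definition \ref{def:monoid-valued-dimension} (a full dimension) although the statement only assumes a tangent dimension, and Definition \ref{def:differential bundle} as recalled only asks this square to be a pullback, so your fallback assumption that the square is preserved by all $T^n$ is exactly what both arguments need.
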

\begin{proof}
    The diagrams
    \[\begin{tikzcd}
    	{E_2} && E && {E_2} && {T(E)} \\
    	E && M && M && {T(M)}
    	\arrow["{\pi_0}", from=1-1, to=1-3]
    	\arrow["{\pi_1}"', from=1-1, to=2-1]
    	\arrow["\lrcorner"{anchor=center, pos=0.125}, draw=none, from=1-1, to=2-3]
    	\arrow["q", from=1-3, to=2-3]
    	\arrow["{T(\sigma) \circ \langle \lambda \circ \pi_0 , 0 \circ \pi_1\rangle}", from=1-5, to=1-7]
    	\arrow["{q \circ \pi_0}"', from=1-5, to=2-5]
    	\arrow["\lrcorner"{anchor=center, pos=0.125}, draw=none, from=1-5, to=2-7]
    	\arrow["{T(q)}", from=1-7, to=2-7]
    	\arrow["q"', from=2-1, to=2-3]
    	\arrow["0"', from=2-5, to=2-7]
    \end{tikzcd}\]
    are pullbacks. Since $q$ is a retraction, $0$ is a section and $T(q)$ is a retraction, a dimension in the sense of Definition \ref{def:monoid-valued-dimension} will fulfill the equations
    \begin{align*}
        \dim(E_2) + \dim(M) &= \dim(E) + \dim(E)
        \\
        \dim(E_2) + \dim(T(M)) &= \dim(M) + \dim(T(E)).
    \end{align*}
    Thus we obtain that
    $$
    \dim(T(E)) + 2 \cdot \dim (M)= \dim(E_2) + \dim(M) + \dim(T(M)) = \dim(T(M)) + 2 \cdot \dim(E).
    $$
\end{proof}
\begin{theorem}\label{thm:DiffBun_dimension_argument_strong}
    Let $R$ be a ring and let $\dim : \pi_0(X) \to M$ be a strong tangent dimension on a tangent category $(\mathbb X, T, p,0,+,\ell, c)$ with $a \in R$ such that $\dim(T(X)) = a \cdot \dim(X)$. Then for any differential bundle $(E,M,q,\zeta, \sigma, \lambda)$ the equation
    $$
    (a-2) \dim(E) = (a-2) \dim(M)
    $$
    holds
\end{theorem}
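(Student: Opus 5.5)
The plan is to derive this directly from Theorem \ref{thm:DiffBun_dimension_argument_weak}. A strong tangent dimension is in particular a tangent dimension, so that theorem applies and gives
$$
\dim(T(E)) + 2 \cdot \dim(M) = \dim(T(M)) + 2 \cdot \dim(E).
$$
Next substitute the strong hypothesis $\dim(T(Y)) = a \cdot \dim(Y)$. Applied to $Y = E$ and to $Y = M$, the equation becomes
$$
a \cdot \dim(E) + 2 \cdot \dim(M) = a \cdot \dim(M) + 2 \cdot \dim(E).
$$
Since $R$ is a ring, additive inverses exist. Subtracting $2\dim(E) + 2\dim(M)$ from both sides and using distributivity in $R$ gives $(a-2)\dim(E) = (a-2)\dim(M)$, which is the claim.

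The one point needing care is that Theorem \ref{thm:DiffBun_dimension_argument_weak} uses pullbacks to which the tangent-dimension formula must actually apply. In Definition \ref{def:tangent_dimension} these have to be $T$-pullbacks along retractions. The relevant squares are:
\begin{itemize}
\item the pullback $E_2$ of $q$ along $q$, which is preserved by all $T^k$ by the definition of a differential bundle;
\item the universality square (\ref{diagram:universality_diffbun}), which is a pullback of the section $0$ along the retraction $T(q)$.
\end{itemize}
Here $q$ is a retraction with section $\zeta$, and $T(q)$ is a retraction with section $T(\zeta)$. I would verify that the universality square is a $T$-pullback. If the paper's definition does not guarantee this, I would either cite the standard fact that this square is preserved by $T^k$ (as in the literature on differential bundles), or note that Theorem \ref{thm:DiffBun_dimension_argument_weak} is assumed as stated. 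This is the only real obstacle; once it is settled, the rest is the algebra above.

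Finally, I would note that the result holds in any commutative ring, and no integral-domain assumption is needed. If $R$ is an integral domain and $a \neq 2$, it gives $\dim(E) = \dim(M)$. This is the ``strange'' analogue of Property \ref{item:differential_any_fiber} mentioned before the theorem: fibers can have arbitrary dimension only when $a = 2$.
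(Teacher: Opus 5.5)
Your proposal is correct and follows the paper's proof exactly: invoke Theorem \ref{thm:DiffBun_dimension_argument_weak}, substitute $\dim(T(Y)) = a\cdot\dim(Y)$ for $Y=E$ and $Y=M$, and subtract $2\dim(E)+2\dim(M)$ from both sides. Your extra check that the two squares are $T$-pullbacks concerns the hypotheses of Theorem \ref{thm:DiffBun_dimension_argument_weak} itself, which the paper simply takes as given at this step.
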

\begin{proof}
    Due to Theorem \ref{thm:DiffBun_dimension_argument_weak} the euqation
    $$
        \dim(T(E)) + 2 \cdot \dim(M) = \dim(T(M)) + 2 \cdot \dim(E)
    $$
    has to hold. Substituting in $\dim(T(E)) = a \cdot \dim(E)$ and $\dim(T(M)) = a \cdot \dim(M)$, we obtain
    $$
        a \cdot \dim(E) + 2 \cdot \dim(M) = a \cdot \dim(M) + 2 \cdot \dim(E).
    $$
    Subtracting $2 \cdot \dim(M) + 2 \cdot \dim(E)$ on both sides leads to the desired result that
    $$
    (a-2) \dim(E) = (a-2) \dim(M).
    $$
\end{proof}
Due to Theorem \ref{thm:dimension_argument_strong}, we know that $a$ is either 1 or 2.
If $a=2$, then there is no condition on the dimension of a differential bundle. If $a=1$, then $\dim(E) = \dim (M)$, which can be interpreted as differential bundles having 0-dimensional fibre. Thus desired Property \ref{item:differential_any_fiber}(bundle fiber arbitrary dim) may only hold in tangent categories with a strong tangent dimension $a=2$. In the tangent category of smooth manifolds this is the case.

In the following sections \ref{sec:sets}-\ref{sec:cw} we list some examples of dimensions on well-known categories and the consequences stemming from the dimension. In case it is a strong dimension, the consequences for differential bundles are either no condition (if $a=2$) or that $\dim(E) = \dim(M)$ (if $a=1$). Thus we will only mention differential bundles in case we have an example of a tangent category with a tangent dimension that is not strong.

\section{Cardinality as a non-trivial tangent structure on $\Set^\mathrm{op}$ }\label{sec:sets}
All mathematical objects are constructed from sets. 
In Corollary \ref{cor:adjunctions_full_subcats_and_dimensions}, we saw that precomposition with right-adjoint functors preserves dimensions. Most algebraic structures have a free functor from sets that is left-adjoint to the fogetful functor to sets. Thus, by precomposition with the forgetful funcor, a dimension on sets would give rise to many dimensions in many different categories. 

Since sets are discrete, there is no obvious structure in sets from which we could build tangents.  Thus, we do not expect the category of sets to have a non-trivial tangent structure.

However, that does not mean that there can not be a dimension on the category of sets. In fact, as we see in Proposition \ref{prop:only_trivial_on_setop}, a dimension exists on $\Set^\mathrm{op}$ and this dimension can be used to show that there is no nontrivial tangent structure.

The dimension of a vector space (and therefore the dimension of a manifold) is defined by the cardinality of a basis. Like the dimension for vector spaces, the cardinality is the main quantity describing the size of a set.

The cardinality fulfills desirable Properties \ref{item:dim_is_natural_number} (valued in $\mathbb N$), \ref{item:isomorphism} (isomorphism invariant) of Remark \ref{rem:wishlist_dimension}, since it is valued in the natural numbers and preserved by isomorphisms. When choosing multiplication as the monoid structure, it also fulfills \ref{item:cartesian_products_to_sums} (additive w.r.t. products), since $\#(A \times B) = \#A \cdot \#B$. 
However the cardinality is not a categorical dimension in the sense of Definition \ref{def:monoid-valued-dimension} on the category $\Set$. Additionally, we will prove that it is a dimension on $\Set^\mathrm{op}$, the opposite category of sets.

As a consequence we will be able to describe potential tangent structures on the category $\Set^\mathrm{op}$. In particular we will use the dimension to prove that there is no nontrivial Cartesian tangent structure on $\Set^\mathrm{op}$. 

\begin{definition}\label{def:monoids_N_infty}
    Let $\mathbb N= \{0,1,2,...\}$ be the set of natural numbers.
    Let $\mathbb N_\infty$ be the set $\mathbb N \cup \{\infty\}$. Then we define the following three commutative monoid structures on $\mathbb N_\infty$.
    \begin{enumerate}[label=(\alph*)]
        \item The addition $+: \mathbb N_\infty \times \mathbb N_\infty \to \mathbb N_\infty$ is defined to be the usual addition $a+b$ on natural numbers and to give $\infty$ if any of the two inputs is $\infty$. Together with the neutral element $0\in \mathbb N \subset \mathbb N_\infty$ this is a commutative monoid.
        \item The multiplication $\cdot: \mathbb N_\infty \times \mathbb N_\infty \to \mathbb N_\infty$ is defined to be the usual multiplication on natural numbers and to give $\infty$ if any of the two inputs is $\infty$ (in particular $0 \cdot \infty = \infty$). Together with the neutral element $1\in \mathbb N \subset \mathbb N_\infty$ this is a commutative monoid.
        \item  The maximum $\max: \mathbb N_\infty \times \mathbb N_\infty \to \mathbb N_\infty$ is defined to output the lowest number that is greater or equal to both inputs and to give $\infty$ if any of the two inputs is $\infty$. Together with the neutral element $0\in \mathbb N \subset \mathbb N_\infty$ this is a commutative monoid.
    \end{enumerate}
\end{definition}

\begin{remark}
    The cardinality of a set is not an $\mathbb N_\infty$-valued dimension (for either of the monoid structures in Definition \ref{def:monoids_N_infty}) on the category $\Set$ of all sets and functions. The reason for this is that coproducts (disjoint unions) of sets are preserved by pullbacks and introduce some non-trivial combinatorics to the cardinality of pullbacks. A counterexample can be easily constructed by taking the disjoint union of any pullback diagram with a singleton.
\end{remark}

However, on the opposite category $\Set^\mathrm{op}$, the cardinality is a dimension. 
\begin{theorem}\label{thm:dimension_on_setop}
    For the opposite category of sets, $\Set^\mathrm{op}$ the cardinality $\dim(X) = \#X$ is an $\mathbb N$-valued dimension on $\Set^\mathrm{op}$.
\end{theorem}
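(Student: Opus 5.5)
Throughout, I will read $\#$ as valued in $\mathbb N$ with its usual addition, extended to $\mathbb N_\infty$ with the additive structure of Definition \ref{def:monoids_N_infty} when infinite sets are allowed. The plan is to dualize the statement into a statement about pushouts in $\Set$, and then identify those pushouts explicitly.

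\textbf{Dualizing the hypotheses.} A pullback in $\Set^\mathrm{op}$ of $f$ along a retraction $r$ is a pushout in $\Set$. In that pushout, $r$ corresponds to a map $i: B \to A$ that has a left inverse. In particular $i$ is injective. The other leg $f$ corresponds to a map $g: B \to C$, which is injective if $f$ was a retraction and surjective if $f$ was a section. The pullback object $A\times_B C$ in $\Set^\mathrm{op}$ is the pushout $A \sqcup_B C$ in $\Set$. So the required equation becomes
$$
\#(A\sqcup_B C) + \#B = \#A + \#C .
$$
I expect the proof to go through for \emph{any} $g$, using only that $i$ is injective. The case distinction on $f$ should therefore play no role.

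\textbf{Identifying the pushout.} Next I identify the pushout of an injection $i: B \hookrightarrow A$ along an arbitrary $g: B \to C$. The claim is that it is $P = C \sqcup (A \setminus i(B))$, with the following maps:
\begin{itemize}
\item $C \to P$ is the inclusion;
\item $A \to P$ sends $a \notin i(B)$ to itself and sends $a = i(b)$ to $g(b)$.
\end{itemize}
The second map is well defined because $i$ is injective. The square commutes by construction.

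For the universal property, take maps $u: A \to Z$ and $v: C \to Z$ with $u i = v g$. Define $P \to Z$ by $v$ on $C$ and by $u$ on $A\setminus i(B)$. Compatibility on $i(B)$ is exactly the equation $u i = v g$. Uniqueness holds because the two structure maps are jointly surjective onto $P$.

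This verification is the only real step. The main subtlety is that the pushout of sets is a quotient of $A \sqcup C$. One must check that gluing along an injective $i$ identifies each point of $i(B)$ with exactly one point of $C$ and nothing else. This holds because the equivalence relation generated by $i(b)\sim g(b)$ never relates two distinct elements of $C$, since each $a\in A$ has at most one preimage under $i$.

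\textbf{Counting.} The description gives $\#P = \#C + \#A - \#B$ for finite sets, since $\#(A\setminus i(B)) = \#A - \#B$ when $i$ is injective. Rearranged, this is the required equation. For infinite sets, work in $\mathbb N_\infty$ with addition, where both sides equal $\infty$ whenever any of the sets is infinite. Here $\#P$ is infinite if $C$ or $A$ is. If $B$ is infinite then so is $A$, because $i$ is injective.

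Finally, the assignment depends only on the isomorphism class of $X$, so it descends to a function on $\pi_0(\Set^\mathrm{op})$. This completes the check of Definition \ref{def:monoid-valued-dimension}.
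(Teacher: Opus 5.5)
Your proposal is correct and follows the paper's own route: you dualize to a pushout in $\Set$ along an injection, identify that pushout as $C \sqcup (A\setminus i(B))$, and count. You also verify the universal property, which the paper leaves implicit.

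There is one slip in the infinite case. The claim ``$\#P$ is infinite if $A$ is'' is false: take $A=B=\mathbb N$, $i$ the identity, and $C$ a point, so $P$ is a point. The conclusion still holds, because in that situation $B$ is infinite. You can also drop the case split entirely by computing $\#P+\#B=\#C+\#(A\setminus i(B))+\#i(B)=\#C+\#A$ with no subtraction. This is exactly the paper's computation, and it holds verbatim in $(\mathbb N_\infty,+)$.
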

\begin{proof} Since we are in the opposite category of sets, the retractions in $\Set^\mathrm{op}$ are exactly the injections of sets.
For every pushout along an injection $i$
    \[
    \begin{tikzcd}
    	{C \sqcup_A B} & B \\
    	C & A
    	\arrow[from=1-2, to=1-1]
    	\arrow[from=2-1, to=1-1]
    	\arrow[hook',"i"', from=2-2, to=1-2]
    	\arrow["f", from=2-2, to=2-1]
    \end{tikzcd}
    \]
    the set ${C \sqcup_A B}$ can be written as a disjoint union of $C$ with $B-A$. Thus
    $$
    \#(C \sqcup_A B) + \#A = \#C + \# (B-A)+ \# A = \#C + \#B
    $$
    and thereby the condition of Definition \ref{def:monoid-valued-dimension} is fulfilled.
\end{proof}

\begin{remark}\label{rem:finiteness_possible_for_sets}
    All the results about $\Set$ in this section also hold for $\FinSet$, the category of finite sets. Since $\FinSet$ is a full subcategory of $\Set$, this follows from Corollary \ref{cor:adjunctions_full_subcats_and_dimensions} and the fact that we never used infinite sets in any proof.

    In particular the cardinality is an $(\mathbb N,+)$-valued dimension on $\FinSet^\mathrm{op}$.
\end{remark}

Below, we will apply Theorems \ref{thm:dimension_result_weak} and \ref{thm:dimension_argument_strong} to the cardinality as a dimension. A particular situation are Cartesian tangent categories. Cartesian tangent categories are defined in \cite[Definition 2.8]{Cockett2014DifferentialST} as tangent categories that admit finite products and where the products are compatible with the tangent structure.

\begin{corollary}\label{cor:tangents_on_Finsetop}~
\begin{enumerate}[label = (\alph*)]
    \item Any tangent structure on $\Set^\mathrm{op}$ will fulfill that 
    $$
    \# T^2(X) + 2 \cdot \#X = 3 \cdot \#T(X).
    $$
    \item Every Cartesian tangent structure on $\FinSet^\mathrm{op}$ fulfills $\# T(X) = \#T(\{*\}) \cdot \#X$ and $\#T(\{*\})$ is either $\{*\}$ or $\{*,*\}$.
\end{enumerate}
\end{corollary}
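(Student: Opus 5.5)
Part (a) will follow directly from Theorem \ref{thm:dimension_result_weak} once we know that the cardinality is an $(\mathbb N,+)$-valued dimension on $\Set^\mathrm{op}$, which is Theorem \ref{thm:dimension_on_setop}. Every $M$-valued dimension is in particular an $M$-valued tangent dimension for any tangent structure, since every $T$-pullback is a pullback. So for any tangent structure $T$ on $\Set^\mathrm{op}$ the theorem gives $\#T^2(X) + 2\cdot\#X = 3\cdot\#T(X)$ in $\mathbb N_\infty$, where we read the equation with the additive monoid structure of Definition \ref{def:monoids_N_infty} to handle infinite sets.

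For part (b) I would work in $\FinSet^\mathrm{op}$, where by Remark \ref{rem:finiteness_possible_for_sets} the cardinality is an $(\mathbb N,+)$-valued dimension. We first check that it is strong for a Cartesian tangent structure.

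Products in $\FinSet^\mathrm{op}$ are coproducts (disjoint unions) of finite sets, and the terminal object is $\emptyset$. A Cartesian tangent structure preserves finite products, so $T(X \sqcup Y) \cong T(X)\sqcup T(Y)$ and $T(\emptyset)\cong\emptyset$. Every finite set $X$ is the $\#X$-fold coproduct of singletons, and this is a product in $\FinSet^\mathrm{op}$. Hence $T(X)\cong \bigsqcup_{x\in X} T(\{*\})$, which gives $\#T(X) = \#T(\{*\})\cdot \#X$. Thus $\dim$ is a strong tangent dimension with $a=\#T(\{*\})$, valued in $\mathbb N\subset\mathbb Z$. To use Theorem \ref{thm:dimension_argument_strong}, which needs an integral domain, we view the dimension as $\mathbb Z$-valued. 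This is harmless because the defining equations of a dimension hold in $\mathbb N$ and hence in $\mathbb Z$, and the same holds for $T$-pullbacks.

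Applying Theorem \ref{thm:dimension_argument_strong} to $X=\{*\}$, which has $\dim(X)=1\neq 0$, gives $(a-1)(a-2)=0$ in $\mathbb Z$, so $a\in\{1,2\}$. Equivalently, $T(\{*\})$ is a one-element or a two-element set, which is what the statement means by $\{*\}$ or $\{*,*\}$.

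The main subtlety is the Cartesian step: we must make sure that "Cartesian tangent structure" really means that $T$ preserves finite products, including the empty product, up to the canonical isomorphism. We must also check that the identification of products in $\FinSet^\mathrm{op}$ with disjoint unions is used with the correct variance. The rest is bookkeeping.
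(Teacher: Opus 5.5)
Your proposal is correct and follows the paper's proof. Part (a) is Theorem \ref{thm:dimension_result_weak} applied to the cardinality dimension of Theorem \ref{thm:dimension_on_setop}, and part (b) uses preservation of disjoint unions (the products in $\FinSet^\mathrm{op}$) to get $\#T(X)=\#T(\{*\})\cdot\#X$ and then applies Theorem \ref{thm:dimension_argument_strong}; your extra remarks (reading (a) in $(\mathbb N_\infty,+)$ for infinite sets, passing to $\mathbb Z$ to meet the integral-domain hypothesis, and evaluating at $X=\{*\}$ where the dimension is nonzero) only make explicit details the paper leaves implicit.
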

\begin{proof}
    Part (a) is a direct application of Theorem \ref{thm:dimension_result_weak} to the cardinality.
    
    For part (b), observe that
    every Cartesian tangent structure on $\mathrm{FinSet}^\mathrm{op}$ will preserve disjoint unions (coproducts) and therefore $T(X) \cong T(\bigsqcup_{x \in X} \{*\}) \cong \bigsqcup_{x \in X} T(\{*\})$ which means $\# T(X) = \#T(\{*\}) \cdot \#X$. Then the hypothesis of Theorem \ref{thm:dimension_argument_strong} is fulfilled and therefore $\#T(X) = \#X$ or $\#T(X) = 2 \cdot \#X$, which shows that $\#T(\{*\})$ is either $\{*\}$ or $\{* , *\}$.
\end{proof}

We can further specify the result of Corollary \ref{cor:tangents_on_Finsetop}.b by showing that only one of these cases is possible. As a result we can show that $\mathrm{FinSet}^\mathrm{op}$ has no Cartesian tangent structure beyond the trivial one.

\begin{proposition}\label{prop:only_trivial_on_setop}
    Let $(\mathrm{FinSet}^\mathrm{op},T,p,+,0,\ell,c)$ be a Cartesian tangent structure on $\mathrm{FinSet}^\mathrm{op}$. Then it is in fact the trivial tangent structure i.e. $T=1_{\mathrm{FinSet}^\mathrm{op}}$, $p=+=0=\ell = c = 1$.
\end{proposition}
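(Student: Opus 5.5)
The plan is to turn everything into a statement about coproduct-preserving endofunctors of $\FinSet$, and then use Corollary \ref{cor:tangents_on_Finsetop}.b to reduce to two cases. Since $T$ is Cartesian on $\FinSet^\mathrm{op}$, the underlying functor $T:\FinSet\to\FinSet$ preserves finite coproducts. Put $K := T(\{*\})$.

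\textbf{Step 1: $T$ is multiplication by $K$.} Writing $X=\bigsqcup_{x\in X}\{*\}$ gives $T(X)\cong X\times K$. A map $f:X\to Y$ is determined by where it sends each summand $\{*\}$. The only endomorphism of $\{*\}$ is the identity, so $T(f)$ must be $f\times 1_K$. Thus $T\cong (-)\times K$ as functors on $\FinSet$, and I transport all the structure maps along this isomorphism.

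\textbf{Step 2: the structure maps are determined by data on $K$.} Each structure map is a natural transformation between functors of the form $(-)\times S$. Such a transformation is determined by its component at $\{*\}$, because naturality with respect to the inclusions $\{*\}\to X$ forces the component at $X$ to be $1_X\times(\text{component at }\{*\})$. I read off the following.
\begin{itemize}
\item $p$ is, in $\FinSet$, a map $X\to X\times K$. It is therefore given by a chosen point $k_0\in K$, namely $x\mapsto (x,k_0)$.
\item $0$ is a map $X\times K\to X$ with $0\circ p=1$, so it is the projection.
\item $T_2X$ is, in $\FinSet$, the pushout of two copies of $x\mapsto(x,k_0)$. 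It is therefore $X\times(K\vee K)$, where $K\vee K$ is the wedge of two copies of $K$ glued at $k_0$.
\item $+$ corresponds to a pointed map $\mu:K\to K\vee K$.
\end{itemize}
By Corollary \ref{cor:tangents_on_Finsetop}.b, $\#K\in\{1,2\}$.

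\textbf{Step 3: the case $\#K=1$.} Here $T\cong 1$, and $p,0,+,\ell,c$ are all natural endomorphisms of the identity functor on $\FinSet$ (up to the canonical isomorphisms). By Step 2, each such endomorphism is determined by an endomorphism of the one-point set, so each is the identity. This gives the trivial tangent structure.

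\textbf{Step 4: the case $\#K=2$ (the main obstacle).} This case cannot be excluded by cardinality alone: the universality equation $\#T_2X+\#TX=\#T^2X+\#X$ reads $3+2=4+1$, which holds. Instead I would use the unit axiom of the additive bundle, dualized. Write $K=\{k_0,k_1\}$ and $K\vee K=\{k_0,a,b\}$, where $a$ and $b$ are the two copies of $k_1$.
\begin{itemize}
\item In $\FinSet$, the left unit law $+\circ\langle 0\circ p,1\rangle=1$ says the following: after collapsing the first wedge summand onto the basepoint, $\mu$ becomes the identity. This forces $\mu(k_1)$ to lie in the second summand, i.e. $\mu(k_1)=b$.
\item The right unit law, obtained from the left one by commutativity, likewise forces $\mu(k_1)=a$.
\end{itemize}
These two requirements are incompatible since $a\neq b$. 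This is the step that needs care: the pushouts and wedge maps have to be dualized correctly. Hence $\#K=2$ is impossible, and only the trivial structure remains.
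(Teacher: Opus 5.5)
Your proof is correct and follows the paper's argument. You reduce via Corollary \ref{cor:tangents_on_Finsetop}(b) to $\#T(\{*\})\in\{1,2\}$, rule out the two-element case because the left and right unit laws force $+$ to send the non-base point to two different elements of the three-element pushout, and get the trivial structure in the one-element case from uniqueness of maps between singletons plus coproduct preservation; your Steps 1--2 (identifying $T\cong(-)\times K$ and writing each structure map as $1_X\times(\text{its component at }\{*\})$) make precise what the paper's Case 2 only sketches, and your Step 4 argument in fact excludes any $K$ with a non-base point, so the appeal to the dimension is not even needed.
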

\begin{proof}
    According to Corollary \ref{cor:tangents_on_Finsetop}, there are two cases, either the tangent functor sends the one point set to two points $T(\{*\}) = \{0,1\}$ or $T(\{*\}) = \{*\}$.
    \begin{enumerate}
        \item[Case 1:] Suppose that $T(\{*\}) = \{0,1\}$. We will show that this is actually impossible and it leads to a contradiction. For this we look at the following diagrams 
        \begin{equation}\label{diag:addition_unitality}
        \begin{tikzcd}
    	{T(M)} && {T_2(M)} && {T(M)} && {T_2M} \\
    	&& {T(M)} &&&& {T(M)}
    	\arrow["{\langle0 \circ p, 1_{T(M)} \rangle}", from=1-1, to=1-3]
    	\arrow["{1_{T(M)}}"', from=1-1, to=2-3]
    	\arrow["{+}", from=1-3, to=2-3]
    	\arrow["{\langle 1_{T(M)} , 0 \circ p \rangle}", from=1-5, to=1-7]
    	\arrow["{1_{T(M)}}"', from=1-5, to=2-7]
    	\arrow["{+}", from=1-7, to=2-7]
        \end{tikzcd}
        \end{equation}
        which have to commute in any tangent category
        In $\FinSet^\mathrm{op}$ with $T(\{*\}) = \{0,1\}$, the pullback of $T$ along itself needs to have 3 elements (because cardinality is a dimension), we denote it as $T_2(*) = \{a,b,c\}$. The projection $p: T(\{*\}) \to \{*\}$ in $\FinSet^\mathrm{op}$ corresponds to a function $\{*\} \to \{0,1\}$ in $\FinSet$. Without loss of generality we assume it sends $*$ to $0$, i.e. the projection morphism is the inclusion of zero function $p=i_0$. The zero section $0 : \{*\} \to T(\{*\})$ starts at the initial object of $\FinSet^\mathrm{op}$ and thus is the unique such morphism. It corresponds to the unique function $\{0,1\} \to \{*\}$. 
        Using these insights we now write the diagrams \ref{diag:addition_unitality} as 
        \[\begin{tikzcd}
        	{\{0,1\}} &&& {\{a,b,c\}} & {\{0,1\}} &&& {\{a,b,c\}} \\
        	&&& {\{0,1\}} &&&& {\{0,1\}}
        	\arrow["{a \mapsto 0, b \mapsto0, c \mapsto1}"', from=1-4, to=1-1]
        	\arrow["{a \mapsto 0, b \mapsto 1, c \mapsto 0}"', from=1-8, to=1-5]
        	\arrow["{1_{\{0,1\}}}", from=2-4, to=1-1]
        	\arrow["{+}"', from=2-4, to=1-4]
        	\arrow["{1_{\{0,1\}}}", from=2-8, to=1-5]
        	\arrow["{+}"', from=2-8, to=1-8]
        \end{tikzcd}\]
        in $\FinSet$. The maps $\{a,b,c\} \to \{0,1\}$ are obtained because the unique morphisms into pullbacks correspond to unique morphisms out of pushouts.
        The first diagram only commutes if $+(1) = c$ and the second diagram only commutes if $+(1) = b$. Thus they can not both commute, a tangent structure with $T(\{*\}) = \{0,1\}$ is impossible
        \item[Case 2:] Suppose that $T(\{*\}) = \{*\}$. Then $p_{\{*\}}$, $+_{\{*\}}$, $0_{\{*\}}$, $\ell_{\{*\}}$ and $c_{\{*\}}$ are morphisms from $\{*\}$ to $\{*\}$. Since $\{*\}$ is initial in, the only such morphism is the identity. Now, since $(\FinSet^\mathrm{op},T,p,+,0,\ell,c)$ is Cartesian and every object is a product power (disjoint union) of $\{*\}$, the functor and structure morphisms also need to be the identity on every other object.
    \end{enumerate}
\end{proof}

As a consequence of the cardinality as a dimension on $\Set^\mathrm{op}$, the vertex set cardinality is also a dimension on $\Grph^\mathrm{op}$, the opposite category of looped graphs.

The category $\Grph$ of looped graphs has a forgetful functor $U : \Grph \to \Set$ given by taking the vertex set. This forgetful functor has a right-adjoint $R: \Set \to \Grph$ that sends a set to the complete graph on the set.
\begin{proposition}\label{prop:graph_vertex_number}~
\begin{enumerate}[label = (\alph*)]
    \item The assignment $v:\pi_0(\Grph) \to \mathbb N_\infty$ that sends a graph to its number of vertices is an $(\mathbb N_\infty, +)$-valued dimension on $\Grph^\mathrm{op}$.
    \item Let $(\Grph^\mathrm{op},T, p,0,+,\ell,c)$ be a tangent structure on $\Grph^\mathrm{op}$. Then for any graph $G$, the equation
    $$
    v(T^2(G))+ 2 \cdot v(G) = 3  \cdot v(T(G))
    $$
    holds.
    \item Let $(\Grph^\mathrm{op},T, p,0,+,\ell,c)$ be a tangent structure on $\Grph^\mathrm{op}$ and let $a \in \mathbb N$ be such that for every graph $G$, $v(T(G)) = a \cdot v(G)$. Then $a =1$ or $a=2$.
\end{enumerate}
\end{proposition}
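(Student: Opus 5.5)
Part (a) follows by transporting the cardinality dimension on $\Set^\mathrm{op}$ (Theorem \ref{thm:dimension_on_setop}) along the vertex-set functor, via Corollary \ref{cor:adjunctions_full_subcats_and_dimensions}. Since $U \dashv R$ with $U:\Grph\to\Set$, taking opposites gives $R^\mathrm{op} \dashv U^\mathrm{op}$, where $U^\mathrm{op}:\Grph^\mathrm{op}\to\Set^\mathrm{op}$. So $U^\mathrm{op}$ is a right adjoint. Corollary \ref{cor:adjunctions_full_subcats_and_dimensions}(a) then shows that $G\mapsto \#U(G)=v(G)$ is a dimension on $\Grph^\mathrm{op}$.

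One small point needs care: Theorem \ref{thm:dimension_on_setop} is stated with values in $\mathbb N$, but graphs may have infinitely many vertices. I would first note that its proof works verbatim with values in $(\mathbb N_\infty,+)$. The identity $\#(C\sqcup_A B)+\#A=\#C+\#B$ holds for all cardinals, and collapsing every infinite cardinal to $\infty$ is a monoid homomorphism from cardinal addition. With that in hand, the corollary applies directly.

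I should also double-check the adjunction direction in the paper's setup. The paper says $R$ sends a set to the complete looped graph, and a graph map into a complete graph is just a vertex function, so $U\dashv R$ is correct. If it had instead been $R\dashv U$, I would fall back on the limit-preservation proposition and argue directly that $U$ preserves pushouts of graphs along injections. Vertex sets of pushouts are computed as pushouts of vertex sets, so this fallback also works.

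Part (b) is then immediate from Theorem \ref{thm:dimension_result_weak}. Every dimension is a tangent dimension, so applying that theorem to $v$ gives $v(T^2G)+2\,v(G)=3\,v(TG)$ in $\mathbb N_\infty$.

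For part (c), Theorem \ref{thm:dimension_argument_strong} requires an integral domain, and $\mathbb N_\infty$ is not one, so I would argue by hand.
\begin{itemize}
\item Take $G$ to be the one-vertex looped graph, so $v(G)=1$. Then $v(TG)=a$ and $v(T^2G)=a\cdot v(TG)=a^2$, all finite because $a\in\mathbb N$.
\item Part (b) gives $a^2+2=3a$ in $\mathbb N$. This can be viewed inside $\mathbb Z$, where it factors as $(a-1)(a-2)=0$, so $a\in\{1,2\}$.
\end{itemize}
The only real obstacle in the whole argument is the bookkeeping around $\infty$ and the adjunction direction in (a). Parts (b) and (c) are routine.
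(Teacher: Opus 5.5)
Your proof is correct. For (a) and (b) it matches the paper: $U\dashv R$ makes $U^{\mathrm{op}}$ a right adjoint, so Corollary \ref{cor:adjunctions_full_subcats_and_dimensions} transports the cardinality dimension of Theorem \ref{thm:dimension_on_setop}, and (b) is Theorem \ref{thm:dimension_result_weak} applied to $v$. You also note that Theorem \ref{thm:dimension_on_setop} has to be read with values in $(\mathbb N_\infty,+)$ rather than $\mathbb N$, via the cardinal identity and the collapsing homomorphism. That is a genuine point the paper passes over, since $\Set$ contains infinite sets.

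The real difference is in (c). The paper just cites Theorem \ref{thm:dimension_argument_strong}. That theorem assumes the dimension takes values in an integral domain, and its proof subtracts. Subtraction is not available in $\mathbb N_\infty$, where $\infty$ absorbs everything and cannot be cancelled.

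You instead evaluate (b) at the one-vertex graph, where every quantity is finite, and solve $a^2+2=3a$ in $\mathbb Z$. This avoids the hypothesis mismatch. It also supplies an object with nonzero dimension, which the paper's remark after Theorem \ref{thm:dimension_argument_strong} needs in order to pass from ``$\dim(TX)=\dim(X)$ or $2\dim(X)$'' to ``$a=1$ or $a=2$''. Your version of (c) is therefore the more rigorous of the two; the paper's citation gains only brevity.
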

\begin{proof}
\begin{enumerate}[label = (\alph*)]
\item Since $U:\Grph \to \Set$ is left-adjoint, $U^\mathrm{op}:\Grph^\mathrm{op} \to \Set^\mathrm{op}$ is right-adjoint. Now Corollary \ref{cor:adjunctions_full_subcats_and_dimensions} and Theorem \ref{thm:dimension_on_setop} produce the dimension on the opposite category of looped graphs described above.
\item This is a direct application of Theorem \ref{thm:dimension_result_weak}.
\item This is a direct application of Theorem \ref{thm:dimension_argument_strong}.
\end{enumerate}    
\end{proof}

An important extension of the opposite category of sets is the category of polynomial functors. Thus one could hope that the cardinality can be extended to a dimension on the category of polynomial functors.


\begin{example}
The category $\Poly$ of polynomial functors and natural transformations has been extensively analyzed in the literature, for example in \cite{spivak_book}.
For the precise definitions and constructions in $\Poly$ we ask the reader to read \cite{spivak_book}.
One of its key properties is that it contains $\Set^\mathrm{op}$ as a full subcategory, concretely as the full subcategory of monomials. The degree of a monomial corresponds to the cardinality of a set.

Since $\Poly$ is an extension of $\Set^\mathrm{op}$ it seems promising to define the degree of a polynomial functor as the dimension.
However, we will present an example of a pullback where the degree of polynomials does not add up like in Definition \ref{def:monoid-valued-dimension}.

Pullbacks in $\Poly$ are explained in \cite[Example 5.38]{spivak_book}. They are given by taking the pullback of monomial-sets and the pushout of exponents. Due to this characterization of pullbacks, the following square is a pullback in $\Poly$.
\[\begin{tikzcd}
	{y^{\{1,2,3\}}+ y^{\{a,b,c\}}} & {y^{\{3\}}+y^{\{a,b\}}} \\
	{y^{\{1,2\}}+ y^{\{c\}}} & {y^\emptyset+y^\emptyset}
	\arrow[from=1-1, to=1-2]
	\arrow[from=1-1, to=2-1]
	\arrow["{y^0 + y^0}", from=1-2, to=2-2]
	\arrow["{y^0 + y^0}"', from=2-1, to=2-2]
\end{tikzcd}\]
where for any set $X$, the map $0: \emptyset  \to X$ is the unique element of $\Hom_\Set(\emptyset,X)$ and $y: \Set^\mathrm{op} \to \Fun(\Set, \Set)$ is the Yoneda embedding. 
Since $0: \emptyset  \to X$ is a section in $\Set$, the natural transformations $y^0$ and $y^0 + y^0$ are retractions in $\Poly$.
Then
$$
\deg({y^{\{3\}}+y^{\{a,b\}}}) + \deg({y^{\{1,2\}}+ y^{\{c\}}}) = 2+2 = 4
$$
while
$$
\deg({y^{\{1,2,3\}}+ y^{\{a,b,c\}}}) + \deg(y^0 + y^0) = 3+0 =3.
$$
Thus
$$
\deg({y^{\{3\}}+y^{\{a,b\}}}) + \deg({y^{\{1,2\}}+ y^{\{c\}}}) \neq \deg({y^{\{1,2,3\}}+ y^{\{a,b,c\}}})
$$
showing that the polynomial functor degree is not a dimension on $\Poly$.
\end{example}

\section{Cardinality of groups as a dimension valued in a non-standard monoid}\label{sec:groups}
One of the most important algebraic structures on a set is a group. In \cite{ikonicoff2025abelianizationtangentcategories}, a tangent structure on the category of groups based on the abelianization of groups is defined. Like for sets the cardinality is the most important quantity to describe the size of a group. Therefore we will check below, if it is a dimension on $\Grp$ or $\Grp^\mathrm{op}$.

When thinking about tangent structures on groups, one may first think of the underlying manifold structure of Lie groups, which is a trivial tangent structure on discrete groups.
However, in \cite{ikonicoff2025abelianizationtangentcategories}, Sacha Ikonicoff, Jean-Simon Lemay and Tim Van der Linden showed that there is a different tangent structure on the category of groups that is based purely on algebraic properties, not topological properties of a group. We will explore the consequence of dimensions on this tangent structure in Example \ref{ex:tangent_structure_on_grp}.

An interesting aspect of the dimension on $\Grp$ is that it is valued in $(\mathbb N_\infty, \cdot)$ and not in $(\mathbb N_\infty, +)$. Unlike in all previous examples, the commutative monoid structure it is valued in is given by the multiplication, not the addition. This showcases that the use coming from being valued in any commutative monoid.
 
When considering the cardinality as a candidate for a dimension on $\Grp$, one needs to choose multiplication as the monoid operation to obtain an analogue for desired Property \ref{item:cartesian_products_to_sums} (additive w.r.t. products) of Remark \ref{rem:wishlist_dimension} as $\# (G \times G') = \#G \cdot \#G'$.

\begin{lemma}\label{lem:cardinality_groups_formula}
    Let $A \xrightarrow{f} C \xleftarrow{g} B$ be a cospan in \Grp{}, the category of groups, where $f$ is an epimorphism. Then the pullback $A \times_C B$ exists and
    $$
    \#(A \times_C B) \cdot \#C = \# A \cdot \#B . 
    $$
\end{lemma}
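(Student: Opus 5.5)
The plan is to use the standard set-theoretic model of the pullback in \Grp{} and count it fibrewise via the first isomorphism theorem. Pullbacks always exist in \Grp{}: for any cospan $A \xrightarrow{f} C \xleftarrow{g} B$ the subgroup
$$
A \times_C B = \{(a,b) \in A \times B \mid f(a) = g(b)\}
$$
of $A \times B$, with the two restricted projections, has the universal property. Being a subgroup is immediate from $f$ and $g$ being homomorphisms, and the universal property is inherited from the product $A\times B$. So existence needs no hypothesis; the epimorphism assumption enters only in the counting. I interpret the equation in $(\mathbb N_\infty,\cdot)$ from Definition \ref{def:monoids_N_infty}, so any infinite factor makes the corresponding side $\infty$.

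The key step is to study the projection $\pi_1 : A\times_C B \to B$. Its kernel is $\{(a,e) \mid f(a) = e\} \cong \ker f$. Its image is $g^{-1}(\mathrm{Im} f)$. Here I use that an epimorphism in \Grp{} is surjective, a standard fact proved via the amalgamated-product / permutation argument, which I will cite rather than reprove. Hence the image is all of $B$. The first isomorphism theorem then gives $(A\times_C B)/\ker f \cong B$. Likewise $f$ surjective gives $A/\ker f \cong C$. Lagrange's theorem yields, for finite groups,
$$
\#(A\times_C B) = \#\ker f \cdot \#B, \qquad \#A = \#\ker f \cdot \#C .
$$
Multiplying the first by $\#C$ and substituting the second gives $\#(A\times_C B)\cdot\#C = \#A\cdot\#B$.

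The main obstacle is the infinite case, where cancellation is unavailable. I handle it by cardinal arithmetic, which holds for infinite cardinals as well: $\#G = \#H\cdot\#(G/H)$ for any subgroup $H \le G$. This gives $\#(A\times_C B) = \#\ker f\cdot\#B$ and $\#A = \#\ker f\cdot\#C$ as cardinals. I then check that the identity survives collapsing all infinite cardinals to $\infty$. If $A$ is infinite, then $\ker f$ or $C$ is infinite, so the left side is infinite and both sides equal $\infty$. If $B$ is infinite, the pullback is infinite, so again both sides equal $\infty$. If $A$ and $B$ are finite but $C$ is infinite, this is impossible, since $C$ is a quotient of $A$. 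Groups are nonempty, so no factor is $0$, and the convention $0\cdot\infty=\infty$ never comes into play. This settles all cases.
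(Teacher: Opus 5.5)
Your proof is correct, and it organizes the count differently from the paper. The paper decomposes the pullback as $\bigsqcup_{c\in\mathrm{Im}(g)} g^{-1}(c)\times f^{-1}(c)$, counts each piece as a product of cosets, and divides, obtaining $\#(A\times_C B)=\#\mathrm{Im}(g)\cdot\frac{\#B}{\#\mathrm{Im}(g)}\cdot\frac{\#A}{\#C}$. You instead note that $\pi_1\colon A\times_C B\to B$ is a surjective homomorphism with kernel $\ker f\times\{e\}$, that is, the pullback of a surjection is a surjection with the same kernel.

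Your route gives a division-free pair of identities, $\#(A\times_C B)=\#\ker f\cdot\#B$ and $\#A=\#\ker f\cdot\#C$, valid for arbitrary cardinals. The paper's quotients such as $\#B/\#\mathrm{Im}(g)$ only make sense for finite groups, so its ``$C$ finite'' case is informal when $A$ or $B$ is infinite. Your case analysis treats that situation explicitly.

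You actually need less than you claim, because no cancellation is used anywhere. Substitution gives $\#(A\times_C B)\cdot\#C=\#\ker f\cdot\#B\cdot\#C=\#A\cdot\#B$ directly in cardinal arithmetic. Sending every infinite cardinal to $\infty$ is a homomorphism of multiplicative monoids on nonzero cardinals, so your three cases merely verify this.

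You are also more careful than the paper on two side points. First, existence of the pullback holds in \Grp{} for every cospan via the explicit subgroup; the paper appeals to finite groups instead. Second, the step from ``epimorphism'' to ``surjective'' is a genuine standard theorem about \Grp{}, which the paper uses tacitly.
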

\begin{proof}
    The pullback exists since the category of finite groups is complete. Since the forgetful functor from finite groups to finite sets preserves limits, the underlying set of the pullback can explicitly be described by the formula
    $$
    A \times _C B = \{ (a,b) \in A \times B~ |~ f(a) = g(b)\}.
    $$
    We now rewrite this formula as 
    $$
    A \times_C B = \bigsqcup_{c \in \mathrm{Im}(g)} g^{-1}(c) \times f^{-1}(c) ,
    $$
    where $\mathrm{Im}(g)$ denotes the image and $g^{-1}(c)$ and $f^{-1}(c)$ denote pre-images. If $C$ and thus $\mathrm{Im}(g)$ is finite, it follows from the first isomorphism theorem and the surjectiveness of $f$ that for any $c \in \mathrm{Im}(g)$,
     this means that $\#g^{-1}(c) = \frac{\# B}{\# \mathrm{Im}(g)}$ and $\#f^{-1}(c) = \frac{\# A}{\# C}$. Therefore we conlude that 
    $$
    \# (A \times_C B) = \# \mathrm{Im}(g) \cdot \frac{\# B}{\# \mathrm{Im}(g)} \cdot \frac{\# A}{\# C} = \frac{\# B \cdot \#A}{\#C}.
    $$
    Multiplying both sides of the equation with $\# C$ concludes the proof for the case that $C$ is finite. 

    If $C$ is infinite, the surjectivity of $f$ implies that $A$ is also infinite. Thus 
    $$
    \#A \cdot \# B = \infty = \# C \cdot \# (A \times_B C).
    $$
    Here we use the convention of Definition \ref{def:monoids_N_infty}, that $0 \cdot \infty = \infty$.
\end{proof}
We observe that this formula looks very like Definition \ref{def:monoid-valued-dimension} with multiplication instead of addition. Therefore we will now consider the commutative monoid $(\mathbb N_\infty, \cdot)$, as defined in Definition \ref{def:monoids_N_infty}.

\begin{theorem}\label{thm:cardinality_groups}
The cardinality is 
\begin{enumerate}[label = (\alph*)]
    \item an $(\mathbb N_\infty , \cdot)$-valued dimension on $\Grp$ and
    \item an $(\mathbb N , \cdot)$-valued dimension on $\FinGrp$.
\end{enumerate}
\end{theorem}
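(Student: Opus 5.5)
The plan is to reduce everything to Lemma \ref{lem:cardinality_groups_formula}, which already does the counting. By Definition \ref{def:monoid-valued-dimension}, I must check the multiplicative identity
$$\#(A\times_B C)\cdot \#B = \#A\cdot \#C$$
for every pullback of a morphism $f: C\to B$ along a retraction $r: A\to B$, in the two cases where $f$ is a retraction or a section. Well-definedness on $\pi_0(\Grp)$ is immediate, since isomorphic groups have the same cardinality.

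The key observation is that a retraction $r$ in $\Grp$ has a section $s$ with $r\circ s = 1_B$. Hence $r$ is surjective on underlying sets, and in particular $r$ is an epimorphism. So the cospan $A \xrightarrow{r} B \xleftarrow{f} C$ satisfies the hypothesis of Lemma \ref{lem:cardinality_groups_formula}, with $r$ playing the role of the epimorphism. That lemma gives the required identity directly, regardless of whether $f$ is a retraction, a section, or neither. Thus the condition on $f$ in the definition is not even needed, and part (a) follows. The identity is taken in $(\mathbb N_\infty,\cdot)$, with the infinite case handled by the convention $0\cdot\infty=\infty$ as in the lemma.

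The main point to be careful about is that the lemma's proof uses surjectivity of $f$ (there called an epimorphism). In $\Grp$, epimorphisms are in fact surjective, but I avoid that fact entirely, because a retraction is visibly surjective. I would also double-check the infinite case in the lemma. If $B$ is infinite, then $A$ is infinite because $r$ surjects onto $B$, so both sides equal $\infty$. If $B$ is finite but $A$ or $C$ is infinite, the finite-$C$ argument of the lemma still applies: the fibres of $r$ all have size $\#A/\#B$ via cosets of the kernel, and the pullback is a union over $\mathrm{Im}(f)$ of products of fibres. So both sides are infinite, with one caveat. If $\#A\cdot\#C$ involves a factor $0$, no factor is actually $0$, since groups are nonempty. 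Hence the conventions pose no problem.

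For part (b), I would apply Corollary \ref{cor:adjunctions_full_subcats_and_dimensions}(b): $\FinGrp$ is a full subcategory of $\Grp$, so the restriction is an $(\mathbb N_\infty,\cdot)$-valued dimension. It takes values in $\mathbb N$, and the defining equation holds there. Alternatively, I would note that pullbacks of finite groups are finite and coincide with those in $\Grp$, so the same lemma applies verbatim with $\mathbb N$-values.
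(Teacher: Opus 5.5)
Your proposal is correct and follows the paper's proof. Part (a) comes from Lemma \ref{lem:cardinality_groups_formula}, with the retraction $r$ as the surjective leg, and part (b) from Corollary \ref{cor:adjunctions_full_subcats_and_dimensions}(b) applied to the full subcategory $\FinGrp \subseteq \Grp$. Your remark that pullbacks of finite groups are finite, and so agree with those computed in $\Grp$, is what actually makes the restriction step legitimate, since a full inclusion need not preserve pullbacks in general.
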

\begin{proof}~
    \begin{enumerate}[label = (\alph*)]
    \item This is follows from Lemma \ref{lem:cardinality_groups_formula}.
    \item This follows from Corollary \ref{cor:adjunctions_full_subcats_and_dimensions} and the fact that $\FinGrp$ is a full subcategory of $\Grp$.
    \end{enumerate}
\end{proof}

\begin{corollary}\label{cor:cardinality_groups} 
Let $X$ be a group.
\begin{enumerate}[label = (\alph*)]
    \item For any tangent structure $(T,p,0,+,\ell,c)$ on $\Grp$ or $\FinGrp$, the equation
    $$
    \# T^2(X) \cdot (\#X)^2 = (\#T(X))^3
    $$ has to hold.
    \item For any tangent structure $(T,p,0,+,\ell,c)$ on $\FinGrp$ with $\#T(X) = (\#X)^n$, the constant $n$ can only be $n=1$ or $n=2$.
    \item The only tangent structure $(T,p,0,+,\ell,c)$ on $\FinGrp$ with $\#T(X) = n \cdot \#X$ fulfills $T(X) \cong X$.
\end{enumerate}
\end{corollary}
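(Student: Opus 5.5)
Since the cardinality is an $(\mathbb N_\infty,\cdot)$-valued dimension on $\Grp$ and an $(\mathbb N,\cdot)$-valued dimension on $\FinGrp$ (Theorem \ref{thm:cardinality_groups}), and every dimension is a tangent dimension for any tangent structure, all three parts are obtained by applying the results of Section \ref{subsec:tangent_and_dimension} to the cardinality, written multiplicatively.

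For (a), I apply Theorem \ref{thm:dimension_result_weak} with the monoid operation taken to be multiplication. The additive identity $\dim(T^2X) + 2\dim(X) = 3\dim(TX)$ then reads $\#T^2(X)\cdot(\#X)^2 = (\#T(X))^3$. This holds on $\Grp$, with the $\infty$ conventions, and on $\FinGrp$.

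For (b), I cannot use Theorem \ref{thm:dimension_argument_strong} directly, because $(\mathbb N,\cdot)$ is not an integral domain under the relevant operations. I argue by hand from (a) instead. Put $m=\#X$, so that $\#T(X)=m^n$ and $\#T^2(X)=(m^n)^n=m^{n^2}$. Then (a) gives $m^{n^2+2}=m^{3n}$. For any finite group with $m\ge 2$ (for example $\mathbb Z/2$), this forces $n^2-3n+2=0$, that is $(n-1)(n-2)=0$, so $n\in\{1,2\}$. The one bookkeeping point is that $n$ must be a constant independent of $X$. We therefore need at least one nontrivial group, and $\mathbb Z/2$ supplies it.

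For (c), suppose $\#T(X)=n\cdot\#X$ for all $X$. Then $\#T^2X=n^2 m$, and (a) gives $n^2 m\cdot m^2 = n^3 m^3$. For $m\ge1$, which always holds for groups, this yields $n^2=n^3$, so $n=1$ because $n\ge1$: $T(X)$ is a group and $0$ is not a possible order. Hence $\#T(X)=\#X$.

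The hard step is upgrading the equality of orders to an isomorphism $T(X)\cong X$. My plan uses the zero section $0_X\colon X\to T(X)$, which is a section of $p_X$ and therefore injective. An injective homomorphism between finite groups of the same order is bijective, so $0_X$ is an isomorphism, with inverse $p_X$. This is the main obstacle, and I would present this finite-cardinality argument explicitly.
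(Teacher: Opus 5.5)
Your proposal is correct and follows the paper's proof essentially step for step: (a) is Theorem~\ref{thm:dimension_result_weak} read multiplicatively, and (b) and (c) come from the same comparisons $m^{n^2+2}=m^{3n}$ and $n^2=n^3$. The only cosmetic difference is in the last step of (c). You use injectivity of the section $0_X$ between groups of equal finite order, while the paper uses surjectivity of the retraction $p_X$ together with the first isomorphism theorem, which is the same counting argument from the other side.
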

\begin{proof}~
    \begin{enumerate}[label = (\alph*)]
    \item This follows from Theorems \ref{thm:cardinality_groups} and \ref{thm:dimension_result_weak}. When formulated for the commutative monoid $(\mathbb N , \cdot)$ the commutative monoid operation that is denoted as $``+"$ in Theorem \ref{thm:dimension_result_weak} becomes the multiplication $``\cdot"$.
    \item Due to Part (a), the equation
    $$
    ((\#X)^n)^n \cdot (\#X)^2 = (\#X)^{3n}
    $$
    has to hold. For brevity, we denote $a:=\#X$ and simplify the equation to 
    $$
    a^{n^2+2} = a^{3n}.
    $$
    Since this has to hold for any finite group $X$, it has to hold for any $a \in \mathbb N$ and this implies that $n^2+2 = 3n$ which only holds for $n=1$ or $n=2$.
    \item Due to Part (a), the equation
    $$
    n^2 \cdot \#X \cdot (\#X)^2 = (n \cdot \#X)^3 
    $$
    has to hold. This implies $n^2 = n^3$, which is only true for $n=0$ and $n=1$. However, $n=0$, since every group has at least one element and thus $\#T(X) = 0$ is impossible. Thus $\#X = \#T(X)$. As a consequence of the first isomorphism theorem, the epimorphism $p:T(X) \to X$ that does not change the cardinality can only be an isomorphism.
\end{enumerate}
\end{proof}

As an example, we will now see how this formula applies to the known nontrivial tangent structure on \Grp.

\begin{example}[\cite{ikonicoff2025abelianizationtangentcategories}, Example 3.8]\label{ex:tangent_structure_on_grp}
    There is a tangent structure on \Grp{} whose tangent bundle endofunctor is given by 
    $$
    T(G) = G \times \mathrm{Ab}(G)
    $$
    where $\mathrm{Ab}(G)$ denotes the abelianization. This implies that 
    $$
    T^2(G) \cong G \times \mathrm{Ab}(G)\times \mathrm{Ab}(G) \times \mathrm{Ab}(G)
    $$
    because $\mathrm{Ab}(\mathrm{Ab}(G)) \cong \mathrm{Ab}(G)$.
    Define $n_G:=\#G$ and $k_G:=\#\mathrm{Ab}(G)$. 
    Since $\mathrm{Ab}(G)$ is a quotient of $G$, $k_G \leq n_G$ and $k_G$ divides $n_G$. 
    Then $\#T(G) = n_G \cdot k_G$ and $\#T^2(G) = n_G \cdot (k_G)^3$. Thus the equation
    $$
    \# T^2(G) \cdot (\#G)^2 = (\#T(G))^3
    $$
    becomes 
    $$
    n_G \cdot (k_G)^3 \cdot n_G^2 = (n_G \cdot k_G )^3.
    $$
    Thus, in this example, the equation from Corollary \ref{cor:cardinality_groups}(a) holds.
\end{example}
    In fact, Lemay and Ikonicoff have also shown a result on differential bundles in \cite[Example 4.16]{ikonicoff2025abelianizationtangentcategories}:  Every differential bundle is of the form $M \times A \xrightarrow{\pi_0} M$ for some abelian group $A$. This implies that 
    $$
    \frac{\# \mathrm{Ab}(M \times A)}{\#(M \times A)} = \#A = \frac{\# \mathrm{Ab}(M)}{\#M}.
    $$    
    We will now showcase the possibilities of a dimension by providing a different proof for this equation using the cardinality as a dimension.
\begin{proposition}
    Let $(\Grp, T, p,0,+,\ell)$ be the tangent structure from \cite{ikonicoff2025abelianizationtangentcategories} described in Example \ref{ex:tangent_structure_on_grp}. Let $E \to M$ be a differential bundle in $(\Grp, T, p,0,+,\ell)$. Then
    $$
    \frac{\# \mathrm{Ab}(E)}{\#E} = \frac{\# \mathrm{Ab}(M)}{\#M}
    $$
\end{proposition}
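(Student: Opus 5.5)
The plan is to apply Theorem \ref{thm:DiffBun_dimension_argument_weak} to the cardinality, which by Theorem \ref{thm:cardinality_groups} is an $(\mathbb N_\infty,\cdot)$-valued dimension on $\Grp$ and hence also a tangent dimension for the tangent structure of Example \ref{ex:tangent_structure_on_grp}. In the multiplicative monoid, the equation of Theorem \ref{thm:DiffBun_dimension_argument_weak} reads
$$
\#T(E)\cdot (\#M)^2 = \#T(M)\cdot (\#E)^2 .
$$
Substituting $T(G)=G\times \mathrm{Ab}(G)$, so that $\#T(G)=\#G\cdot\#\mathrm{Ab}(G)$, this becomes
$$
\#E\cdot\#\mathrm{Ab}(E)\cdot(\#M)^2 = \#M\cdot \#\mathrm{Ab}(M)\cdot (\#E)^2 .
$$

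The second step is to cancel one factor of $\#E$ and one factor of $\#M$. This gives $\#\mathrm{Ab}(E)\cdot \#M = \#\mathrm{Ab}(M)\cdot\#E$. Dividing by $\#E\cdot\#M$ then yields the claimed equality of ratios.

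Cancellation is the main obstacle, since $(\mathbb N_\infty,\cdot)$ with $0\cdot\infty=\infty$ is not cancellative. For finite groups there is no problem: all cardinalities are positive integers, because groups are nonempty, so we may cancel freely in $\mathbb N$. I would therefore state the argument for $E$ and $M$ finite, or read the statement with the convention that the ratios are only meaningful there.

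If $E$ or $M$ is infinite, I would argue separately. The projection $q$ is a retraction with section $\zeta$, so $M$ is a quotient of $E$ and $M$ embeds in $E$. Hence if $M$ is infinite so is $E$. If only $E$ is infinite, the equation of cardinalities degenerates, and no information follows from the dimension argument alone. I would remark that in this case the formula should be interpreted with infinite cardinals, or simply restrict the proposition to finite groups, which is where the ratio makes sense. For the finite case I would also check that the two diagrams used in Theorem \ref{thm:DiffBun_dimension_argument_weak} meet the hypotheses of Lemma \ref{lem:cardinality_groups_formula}. In both, the maps along which we pull back are the retractions $q$ and $T(q)$, which are surjective, so the lemma applies directly.
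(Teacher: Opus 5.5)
Your proposal is correct and follows the paper's own proof: apply Theorem \ref{thm:DiffBun_dimension_argument_weak} to the cardinality as an $(\mathbb N_\infty,\cdot)$-valued dimension, substitute $\#T(G)=\#G\cdot\#\mathrm{Ab}(G)$, and cancel. Your remark that cancellation is only valid for finite $E$ and $M$, because $(\mathbb N_\infty,\cdot)$ is not cancellative, is a point the paper passes over when it divides by $(\#E)^2(\#M)^2$, so restricting to finite groups is the right way to read the statement.
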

    \begin{proof}
        Due to Theorem \ref{thm:DiffBun_dimension_argument_weak}, the equation
        $$
            \#T(E)  \cdot (\#M)^2 = \#T(M) \cdot  (\#E)^2
        $$
        holds. Since $\#T(E) = \#E \cdot \# \mathrm{Ab}(E)$ and $\#T(M) = \#M \cdot \# \mathrm{Ab}(M)$, this can be expanded into
        $$
        \#\mathrm{Ab}(E) \cdot \#E  \cdot (\#M)^2 = \#\mathrm{Ab}(E) \cdot \#M \cdot (\#E)^2.
        $$
        Dividing both sides by $\#E^2 \cdot \# M^2$ leads to the equation
        $$
        \frac{\# \mathrm{Ab}(E)}{\#E} = \frac{\# \mathrm{Ab}(M)}{\#M}.
        $$
    \end{proof}

    In Example \ref{exs:vect_abGrp_as_examples_of_modules} we will further see that there is another nontrivial categorical dimension on the subcategory abelian groups, given by the rank over the integers. The reason for this is that every abelian group is a module over the integers.
Since the cardinality is an $(\mathbb N_\infty, + )$-valued dimension on $\Set^\mathrm{op}$ might suspect that the cardinality is also an $(\mathbb N_\infty, + )$-valued dimension on $\Grp^\mathrm{op}$, the opposite category of groups, since it was already a dimension on $\Set^\mathrm{op}$. However, Corollary \ref{cor:adjunctions_full_subcats_and_dimensions} does not apply here, since the forgetful functor $\Grp^\mathrm{op} \to \Set^\mathrm{op}$ does only have a right adjoint, not a left-adjoint.

In fact the cardinality is not a dimension on $\Grp^\mathrm{op}$. The reason is that the product in $\Grp^\mathrm{op}$ is the coproduct in $\Grp$, the free product of groups. The free product of groups does not fulfill desired Property \ref{item:cartesian_products_to_sums} (additive w.r.t. products) of Remark \ref{rem:wishlist_dimension}. For finite groups $A$ and $B$, the cardinality of $\#(A \otimes B)$ can even be infinite.

\section{Characteristic, Krull dimension and the lowest common multiple}\label{sec:rings}
In this section we investigate different approaches to dimensions of rings. First we define several different categories of rings.  The different choices we consider depend on whether or not the ring is commutative, whether or not the ring is unital, and whether or not the homomorphisms must preserve the unit.

Then we will show that the characteristic of a unital ring is a categorical dimension. This will imply that any tangent structure on the category of unital rings must satisfy certain constraints. This is part way of showing that any tangent structure on rings has to preserve the characteristic.

 Then we will show that the Krull dimension is neither a categorical dimension on the category of rings, nor on its opposite. This is surprising since the opposite category of commutative unital rings is equivalent to the category of affine schemes and the dimension of the scheme corresponds to the Krull dimension of the ring. However, the dimension of an affine scheme is only compatible with the tangent structure for smooth schemes. We suspect the Krull dimension will be a categorical dimension on the opposite of the subcategory of commutative unital rings which corresponds to smooth schemes.

Let $\Ring_{n}$ be the category of all rings (possibly without unit, the $n$ stands for non-unital) and all ring homomorphisms (functions that preserve zero, addition and multiplication). 
Let $\Ring_1$ denote the full subcategory of $\Ring$ consisting of all unital rings, i.e. rings with a unit element. The morphisms may not preserve the unit.
Let $\Ring_u$ denote the subcategory of $\Ring_1$ that only allows unit-preserving homomorphisms.
We define $\CRing_n$, $\CRing_1$ and $\CRing_u$ as the full subcategories of $\Ring_n$, $\Ring_1$ and $\Ring_u$ where the objects are commutative rings. For overview, we summarize this in Table \ref{tab:different_ring_cats}. 
\begin{table}[h]
    \centering
        \begin{tabular}{c|c|c}
             Name & Objects & Morphisms \\
             \hline
            $\Ring_n$ & any rings & any ring homomorphisms \\
            $\Ring_1$ & unital rings & any ring homomorphisms \\
            $\Ring_u$ & unital rings & unit preserving ring homomorphisms \\
            $\CRing_n$ & any commutative rings & any ring homomorphisms \\
            $\CRing_1$ & unital commutative rings & any ring homomorphisms \\
            $\CRing_u$ & unital commutative rings & unit preserving ring homomorphisms
        \end{tabular}
    \caption{The conventions we use for the different categories of rings}
    \label{tab:different_ring_cats}
\end{table}

In other literature, $\Ring$ sometimes denotes $\Ring_n$ and sometimes $\Ring_u$ while $\CRing$ most often denotes $\CRing_u$. For clarity we do not use undecorated $\Ring$ and $\CRing$.

Before we define dimensions on the categories of rings, we will first describe pullbacks of rings.
\begin{lemma}\label{lem:pullbacks_of_rings}
    Any pullback
    \[\begin{tikzcd}
    	{A \times_B C} & A \\
    	C & B
    	\arrow[from=1-1, to=1-2]
    	\arrow[from=1-1, to=2-1]
    	\arrow["f", from=1-2, to=2-2]
    	\arrow["g"', from=2-1, to=2-2]
    \end{tikzcd}\]
    in $\Ring_n$, $\Ring_1$, $\Ring_u$, $\CRing_n$, $\CRing_1$ or $\CRing_u$, is of the form
    $$
    A \times_B C = \{(a,b) \in A \times B ~|~f(a) = g(b) \},
    $$
    if it exists. The ring operations are defined componentwise.
\end{lemma}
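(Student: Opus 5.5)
The statement has a typo: the pullback should sit inside $A \times C$, not $A \times B$. So I will prove that $A \times_B C = \{(a,c) \in A \times C \mid f(a) = g(c)\}$ with componentwise operations, whenever the pullback exists. The plan is to first check that this set $P$ is a ring in each of the six categories, then show it has the universal property, and finally invoke uniqueness of limits up to isomorphism.

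\textbf{Step 1: $P$ is an object of the category.} Since $f$ and $g$ preserve $0$, $+$, negation and multiplication, $P$ is a subring (possibly non-unital) of the product ring $A \times C$. It is commutative whenever $A$ and $C$ are. The projections $\pi_A : P \to A$ and $\pi_C : P \to C$ are ring homomorphisms, and $f \circ \pi_A = g \circ \pi_C$ holds by construction.

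\textbf{Step 2: universal property.} Let $h_A : D \to A$ and $h_C : D \to C$ be morphisms with $f h_A = g h_C$. The map $d \mapsto (h_A(d), h_C(d))$ lands in $P$ and is a ring homomorphism. It is the unique map compatible with the projections, because a map into $P$ is determined by its two components (the underlying set of $P$ is the set-theoretic pullback).

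\textbf{Step 3: the unital categories.} This is the main obstacle.

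In $\Ring_u$ and $\CRing_u$, the element $(1,1)$ lies in $P$ because $f$ and $g$ preserve units. So $P$ is unital, the projections are unit-preserving, and the induced map from Step 2 is unit-preserving whenever $h_A$ and $h_C$ are. Hence $P$ is the pullback.

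In $\Ring_1$ and $\CRing_1$, morphisms need not preserve units, and $P$ may fail to have a unit: $(1,1) \in P$ only if $f(1) = g(1)$. This is exactly why the lemma is stated with "if it exists". The argument I intend:
\begin{itemize}
\item Suppose a pullback $Q$ exists in the full subcategory $\Ring_1$.
\item Compare $Q$ with $P$ by testing against the unital ring $\mathbb Z$. Ring homomorphisms $\mathbb Z \to R$ that need not preserve the unit correspond bijectively to idempotents of $R$.
\item Also test against $\mathbb Z[x]$, or against unitalizations, to probe general elements.
\end{itemize}
A cleaner route, which I would try first, is to use the unitalization $\tilde D = \mathbb Z \oplus D$. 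Its key property is that non-unital homomorphisms $D \to R$ correspond to unit-preserving homomorphisms $\tilde D \to R$ when $R$ is unital. Applying this to $D = \mathbb Z[x]$ shows that maps $\mathbb Z[x] \to Q$ (equivalently, elements of $Q$, modulo the unit issue) must match pairs $(a,c)$ with $f(a) = g(c)$. That identifies the underlying set of $Q$ with $P$, with the ring operations induced componentwise via the projections.

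If this identification becomes too delicate, I would fall back on showing directly that the canonical map $Q \to A \times C$ is injective with image $P$. Injectivity comes from uniqueness in the universal property tested on $\mathbb Z[x]$-shaped maps. Surjectivity onto $P$ comes from constructing, for each $(a,c) \in P$, a compatible pair of maps out of a suitable unital test ring.
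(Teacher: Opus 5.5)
Your Steps 1 and 2 and your treatment of $\Ring_u$ and $\CRing_u$ are correct, and Steps 1--2 already settle $\Ring_n$ and $\CRing_n$. You were also right to single out $\Ring_1$ and $\CRing_1$ as the real issue. But your plan cannot succeed there, because the statement is false in those two categories. The step that breaks is surjectivity onto $P$.

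If $(h_A,h_C)$ is a cone from a unital ring $D$, then $\langle h_A,h_C\rangle\colon D\to P$ sends $1_D$ to an idempotent $e'$ of $P$, so its image lies in the corner $e'Pe'$. An element of $P$ lying in no such corner is never reached by any unital test ring, $\mathbb Z[x]$ or a unitalization included.

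Concretely, take $A=C=\mathbb Z\times\mathbb Z$, $B=\mathbb Z\times\mathbb Z\times\mathbb Z/2$, $f(x,y)=(x,y,0)$ and $g(x,y)=(x,y,[y])$. Then $P\cong\mathbb Z\times 2\mathbb Z$, which has no unit. Take a cone from a unital $D$. The second components of $h_A$ and $h_C$ agree and send $1_D$ to an idempotent $e\in\{0,1\}$ of $\mathbb Z$ with $[e]=0$. Hence $e=0$ and these components vanish, so every cone factors uniquely through $\mathbb Z$ with both legs $n\mapsto(n,0)$. Thus the pullback exists in $\Ring_1$ and $\CRing_1$, equals $\mathbb Z$, and is not $P$. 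The element $((0,2),(0,2))\in P$ is one of the unreachable ones. Dropping the first factor gives the paper's own example after the lemma. Its pullback in $\Ring_1$ is in fact the zero ring (unless rings with $0=1$ are excluded), so it does exist.

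The paper's proof has the same hole. It derives the $\Ring_1$ case from ``the embedding of full subcategories preserves limits'', which is backwards: a full embedding reflects limits but need not preserve them, as this example shows. Reflection is exactly what works for $\CRing_n$ and $\CRing_u$, since there $P$ already lies in the subcategory. It also gives the correct version for $\Ring_1$ and $\CRing_1$: $P$ is the pullback whenever $P$ has a unit, e.g.\ when $f(1_A)=g(1_C)$.

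Your $\mathbb Z[x]$ injectivity argument is valid. Combined with the corner observation, it gives the general picture. The pullback in $\Ring_1$ exists iff $P$ has a largest idempotent $e$, meaning $ee'=e'e=e'$ for every idempotent $e'$. It is then the corner $ePe$, which equals $P$ exactly when $P$ is unital. So the fix is to restrict the statement, not to look for a cleverer test object.
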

\begin{proof}
    The formula for $\Ring_n$ is an easy application of the definition of a limit.
    The formula for $\Ring_1$, $\CRing_n$ and $\CRing_u$ follows from the fact that the embedding of full subcategories preserves limits. The formula for $\Ring_u$ is a direct application of the definition of a limit and the formula for $\CRing_u$ follows from the fact that $\CRing_u$ is a a full subcategory of $\Ring_u$. 
\end{proof}

Pullbacks may not always exist in $\Ring_1$ and $\CRing_1$. An example for this is the pullback
\[\begin{tikzcd}
	{2 \mathbb Z} && { \mathbb Z} \\
	{\mathbb Z} && {\mathbb Z  \times \mathbb Z/2}
	\arrow[hook, from=1-1, to=1-3]
	\arrow[hook, from=1-1, to=2-1]
	\arrow["\lrcorner"{anchor=center, pos=0.125}, draw=none, from=1-1, to=2-3]
	\arrow["{x \mapsto(x,0)}", from=1-3, to=2-3]
	\arrow["{x \mapsto (x,[x])}"', from=2-1, to=2-3]
\end{tikzcd}\]
in $\Ring_n$ which does not exist in $\Ring_1$ since $2 \mathbb Z$, the set of even integers, has no multiplicative unit. 

\subsection{Characteristic as a dimension valued in the least common multiple monoid}
In this section we will use a monoid structure on the natural numbers which is neither of the operations on the rig N. At first one might think that the characteristic of a ring is a strange choice for a dimension, since in general $\mathrm{char}(R \times R') \neq \mathrm{char}(R) +\mathrm{char}(R') $ and therefore desirable Property \ref{item:cartesian_products_to_sums} (additive w.r.t. products) does not hold. 
However, the least common multiple is also a monoid structure on $\mathbb N_\infty^*$. 
\begin{definition}
    Let $\mathbb N^*_\infty$ denote the set of nonzero natural numbers and infinity
    $$
    \mathbb N^*_\infty := \mathbb N \setminus \{0\} \sqcup \{ \infty \} = \{1,2,3,...,\infty\}.
    $$
    Given $a,b \in \mathbb N_\infty^*$, we define their \textbf{least common multiple} as
    $$
    \lcm(a,b) = \min(\{ x \in \mathbb N_\infty^* ~|~ \exists s,t \in \mathbb N_\infty^* , x = s \cdot a = t \cdot b \}).
    $$
    We define $\lcm(a,\infty) = \lcm(\infty, b) = \infty$ for all $a,b \in \mathbb N_\infty\setminus \{0\}$.
\end{definition}
The least common multiple is a commutative monoid with unit $1$. In particular $\lcm$ is associative, since the least common multiple of three numbers $a,b,c$ is both equal to $\lcm(a,\lcm(b,c))$ and to $\lcm(\lcm(a,b),c)$.

The characteristic is additive with respect to this monoid structure, i.e.
$$\mathrm{char}(R \times R') = \lcm (\mathrm{char}(R),\mathrm{char}(R')),$$and thus Property \ref{item:cartesian_products_to_sums} (additive w.r.t. products) holds with respect to this monoid structure.

In this section we will show that the characteristic of a unital ring is a dimension. The characteristic is defined as follows.
\begin{definition}
    Let $R$ be a ring. The characteristic $\mathrm{char}(R)$ is defined as
    $$
    \mathrm{char}(R) = \min\{ n \in \mathbb N\setminus\{0\} ~|~ n \cdot x =0 ~,\forall x \in R \}.
    $$
    If there is no number $n \in \mathbb N$ such that  for all $x \in R$, $n \cdot x=0$, we say the characteristic is $\infty$. 
    
    If the characteristic is finite, this is the classical definition in \cite{Gallian_contemporary_abstract_algebra}. Classically, the characteristic is defined to be zero if there is no number $n \in \mathbb N$ such that $n \cdot x=0$ for all $x$.
\end{definition}
In the literature it is more common to call the characteristic zero when we call it infinity. 
We use this unusual convention since it will make it easier to define a monoid structure such that the characteristic is a categorical dimension.

\begin{lemma}\label{lem:maximal_order_elem}
Let $(R,+,\cdot,0)$ be a ring of characteristic $n$. Then there exists an element $x_R \in R$ such that $k \cdot x_R \neq 0$ for all $k < n$. We call this element a \textbf{maximal order element}.
\end{lemma}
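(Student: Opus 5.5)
The plan is to reduce the statement to a fact about the additive group $(R,+)$: the characteristic $n$ is exactly the exponent of this abelian group (with $n=\infty$ when the exponent is unbounded). A maximal order element is then an element whose additive order is at least $n$; for finite $n$ this means order exactly $n$. I would split into the unital case, which is what the rest of Section \ref{sec:rings} needs, and the general case.

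\textbf{Unital case.} If $R$ has a unit, I would take $x_R = 1$. For any $k \in \mathbb N\setminus\{0\}$ and $x \in R$ we have $k\cdot x = (k\cdot 1)\, x$. Hence $k\cdot 1 = 0$ forces $k \cdot x = 0$ for all $x$, which gives $k \geq n$. So $k\cdot 1 \neq 0$ for every $0<k<n$, and this covers $n=\infty$ as well.

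\textbf{Finite $n$, general ring.} Here I would use the standard fact that a finite-exponent abelian group contains an element whose order equals the exponent. Factor $n = \prod_i p_i^{e_i}$. For each $i$, some $y_i \in R$ has additive order divisible by $p_i^{e_i}$. Otherwise $p_i^{e_i-1}\cdot (n/p_i^{e_i})$ would annihilate every element of $R$, contradicting the minimality of $n$. Replacing $y_i$ by a suitable multiple, I get $z_i$ of order exactly $p_i^{e_i}$. Then $x_R = \sum_i z_i$ has order $n$, because the orders $p_i^{e_i}$ are pairwise coprime and the $z_i$ commute additively. Consequently $k\cdot x_R \neq 0$ for $0<k<n$.

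\textbf{Main obstacle.} The hard step is $n=\infty$ without a unit, and I expect it to fail as stated. Take $R=\bigoplus_{m\geq 1} \mathbb Z/2^m$ with zero multiplication. Its characteristic is $\infty$, yet every element has finite additive order. So in the proof I would state the lemma for unital rings, or for rings of finite characteristic, and note that this suffices for the later arguments, which concern $\Ring_1$ and $\Ring_u$.
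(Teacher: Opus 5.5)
Your argument is correct, and so is your counterexample. $\bigoplus_{m\geq 1}\mathbb{Z}/2^m$ with zero multiplication is an object of $\Ring_n$ of characteristic $\infty$ in which every element has finite additive order. So the lemma is false as stated for $n=\infty$ when there is no unit.

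The paper's proof never meets this problem because it only treats finite $n$. It quotes the structure theorem for abelian groups of bounded exponent to write $(R,+)\cong \prod_{i}\mathbb{Z}/n_i\mathbb{Z}$ with $\lcm(n_i)=n$, and takes $x_R=\varphi^{-1}(1,1,\dots)$.

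Your route is genuinely different: you replace that imported theorem with an elementary primary-component argument. Minimality of $n$ gives, for each prime power $p_i^{e_i}$ exactly dividing $n$, an element of order exactly $p_i^{e_i}$. Coprime orders then multiply under addition. This is self-contained, and it also avoids a slip in the paper's version. The structure theorem gives a direct sum, not a product, and $(1,1,\dots)$ need not be finitely supported. The fix is to pick one coordinate for each of the finitely many distinct $n_i$, which amounts to your construction. The paper's approach is shorter once the structure theorem is granted. Yours also gives the explicit choice $x_R=1$ in the unital case, which handles $n=\infty$ whenever a unit is available.

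One correction to your closing remark concerns where the lemma is used: Case~2 of Lemma~\ref{lem:characteristic_section_retraction} and the finite-characteristic part of Proposition~\ref{prop:characteristic_as_dim_on_Ring}. Both are carried out in $\Ring_n$, i.e.\ for possibly non-unital rings. So unitality is not what makes your restricted statement sufficient. What does is that the paper only invokes the lemma for finite characteristic. The infinite-characteristic cases are handled separately, by choosing for each $k$ some $x_k$ with $k\cdot x_k\neq 0$. It is therefore your finite-$n$ case for arbitrary rings that does the work downstream.
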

\begin{proof}
    The key insight for this proof is that the mulitplicative structure of $R$ does not matter this is just a statement about the abelian group $(R,+,0)$.
    Due to the characterization of finite exponent abelian groups in \cite[Theorem 8.6]{Kaplansky_infinite_abelian_groups},
    the abelian group $(R,+,0)$ is isomorphic (as a group) to a product of cyclic groups.
    $$
    R \overset{\varphi}{\cong} \prod_{i \in i} \mathbb Z/n_i\mathbb Z    
    $$
    where $n$ is the least common multiple of the $n_i$.
    The element $x_R = \varphi^{-1}(1,1,...)$ fulfills the required property since $k \cdot (1,1,...) \neq 0$ for $k <n$.
\end{proof}

Section retraction pairs have a key role in the definition of categorical dimensions and in the definition of tangent categories. Thus we will frequently reason about characteristics using section retraction pairs. For this, it is good to keep in mind that sections can not decrease characteristic.
\begin{lemma}\label{lem:characteristic_section_retraction}
    Given a section $A \xrightarrow{s} B $ with retraction $ B \xrightarrow{r} A$ in $\Ring_n$, the inequality
    $$
    \mathrm{char}(A) \leq \mathrm{char}(B)
    $$
    holds. Additionally, $\mathrm{char}(B)$ is a multiple of $\mathrm{char}(A)$.
\end{lemma}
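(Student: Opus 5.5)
The key observation is that $s$ is injective as a map of underlying sets (since $r \circ s = 1_A$), and it is additive. This lets us compare the additive orders of elements in $A$ with those in $B$. I also need to handle the convention $\mathrm{char} = \infty$, in which no positive integer kills the whole ring.

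First I would show that every integer annihilating $B$ also annihilates $A$. Suppose $n \cdot y = 0$ for all $y \in B$, and let $x \in A$. Then
$$
s(n \cdot x) = n \cdot s(x) = 0 = s(0).
$$
Injectivity of $s$ gives $n\cdot x = 0$. Equivalently, one can apply $r$ directly: $n \cdot x = r(s(n\cdot x)) = r(n \cdot s(x)) = r(0) = 0$, which avoids the injectivity argument. So the set $\{n \geq 1 : nB = 0\}$ is contained in $\{n \geq 1 : nA = 0\}$.

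The inequality now follows immediately. If $\mathrm{char}(B) = \infty$, there is nothing to show. Otherwise $\mathrm{char}(B)$ lies in the set of annihilating integers for $A$, so the minimum $\mathrm{char}(A)$ is at most $\mathrm{char}(B)$; in particular $\mathrm{char}(A)$ is finite.

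For the divisibility claim, assume $\mathrm{char}(B) = m$ is finite (if it is $\infty$, the statement is read in $\mathbb N^*_\infty$, where $\infty$ is a multiple of everything). Let $n = \mathrm{char}(A)$, and write $m = qn + t$ with $0 \leq t < n$. For any $x \in A$ we have
$$
t \cdot x = m \cdot x - q(n \cdot x) = 0,
$$
since $m \cdot x = 0$ by the first step and $n \cdot x = 0$ by definition of $n$. Minimality of $n$ then forces $t = 0$, so $n \mid m$.

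The only delicate points are the infinite-characteristic bookkeeping and noting that the multiplicative structure plays no role. Nothing in the argument needs the maximal order element of Lemma \ref{lem:maximal_order_elem}. I expect no real obstacle here.
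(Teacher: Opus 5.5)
Your proof is correct, but it takes a different and more elementary route than the paper. The paper splits on whether $\mathrm{char}(A)$ is finite. In the finite case it uses the maximal order element $x_A$ from Lemma~\ref{lem:maximal_order_elem}, an element whose additive order is exactly $\mathrm{char}(A)$. It gets the inequality from $n\cdot s(x_A)\neq 0$ for $n<\mathrm{char}(A)$, and the divisibility from $\mathrm{char}(B)\cdot x_A = r(\mathrm{char}(B)\cdot s(x_A))=0$.

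You never single out a witness element. You show $\{n\geq 1 : n\cdot B=0\}\subseteq\{n\geq 1 : n\cdot A=0\}$ for all of $A$ at once, and then use the division algorithm. This amounts to saying that a finite characteristic generates the annihilator ideal of the ring in $\mathbb Z$, so containment of annihilator ideals is exactly divisibility of characteristics.

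This buys you two things. First, you do not depend on the structure theorem for bounded abelian groups that underlies Lemma~\ref{lem:maximal_order_elem}. (Incidentally, that lemma's proof writes the group as a product of cyclic groups and takes $\varphi^{-1}(1,1,\dots)$, whereas the cited theorem gives a direct sum, so that step needs adjusting when there are infinitely many summands.) Second, your treatment of $\infty$ is cleaner: the paper's finite case writes $\mathrm{char}(B)\cdot x_A$ without first dismissing $\mathrm{char}(B)=\infty$, while you handle that case up front and get finiteness of $\mathrm{char}(A)$ for free.

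Your identity $n\cdot x=r(n\cdot s(x))$ also makes plain that only surjectivity of $r$ on the additive groups is used. So the conclusion holds for any surjective homomorphism $B\to A$, not just retractions. The paper's witness-based version mainly buys uniformity with the proof of Proposition~\ref{prop:characteristic_as_dim_on_Ring}, where the same $x_A$ is reused. Your annihilator argument adapts to that proof as well.
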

\begin{proof}
    We distinguish the cases when $A$ has finite characteristic and when $A$ has infinite characteristic.
    
    \textit{Case 1}: $A$ has infinite characteristic. 
    Since $A$ has infinite characteristic, for every $n \in \mathbb N\setminus\{0\}$, there is an $x_n \in A$ such that $n \cdot x_n \neq 0$. Then $s(x_n) \in B$ fulfills that 
    $$
    n \cdot x_n = n \cdot r(s(x_n)) = r (n \cdot s(x_n)) \neq 0.
    $$
    Thus $n \cdot s(x_n) \neq 0$. Since $n$ was arbitrary, this proves that $B$ has infinite characteristic.

    \textit{Case 2}: $A$ has finite characteristic. Let $x_A$ be the element constructed in Lemma \ref{lem:maximal_order_elem}.
    Since $r \circ s=1_A$, for every $n < \mathrm{char}(A)$, we know that 
    $$
    0\neq n \cdot r(s(x_A)) = r(n \cdot s(x_A)).
    $$
    Since $r$ is a ring-homomorphism, this implies for $s(x_A) \in B$ that $n \cdot s(x_A) \neq 0$ implying that $\mathrm{char}(B) \geq \mathrm{char}(A)$.

    On the other hand, 
    $$
    s(\mathrm{char(B)} \cdot x_A) =  \mathrm{char}(B) \cdot s(x_A) = 0
    $$
    and therefore 
    $$
    \mathrm{char(B)} \cdot x_A = r(s(\mathrm{char(B)} \cdot x_A)) = r(0) = 0
    $$
    which implies that $\mathrm{char(B)}$ is a multiple of $\mathrm{char}(A)$.  
\end{proof}

\begin{proposition}\label{prop:characteristic_as_dim_on_Ring}
    The characteristic $\mathrm{char}$ is an $(\mathbb N_\infty^*, \lcm )$-valued dimension on $\Ring_n$, $\Ring_1$ and on $\Ring_u$. 
\end{proposition}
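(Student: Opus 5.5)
The plan is to verify directly the defining equation of Definition \ref{def:monoid-valued-dimension}, which in the monoid $(\mathbb N_\infty^*,\lcm)$ reads
$$
\lcm(\mathrm{char}(A\times_B C),\mathrm{char}(B)) = \lcm(\mathrm{char}(A),\mathrm{char}(C)),
$$
for a pullback of $f:C\to B$ along a retraction $r:A\to B$ with section $s$. By Lemma \ref{lem:pullbacks_of_rings} we may take $P:=A\times_B C=\{(a,c)~|~r(a)=f(c)\}$ with componentwise operations, in each of $\Ring_n$, $\Ring_1$ and $\Ring_u$. Throughout I use the following reformulation. The characteristic is the exponent of the additive group. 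Hence for $m\in\mathbb N\setminus\{0\}$ we have $\mathrm{char}(R)\mid m$ if and only if $m\cdot R=0$, and $\mathrm{char}(R)=\infty$ if and only if no such $m$ exists. With this, proving the equation amounts to two divisibility statements: for every finite $m$, $m$ kills $P$ and $B$ if and only if $m$ kills $A$ and $C$. The case where all characteristics are infinite is then automatic.

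For ``$\Leftarrow$'': if $m\cdot A=0$ and $m\cdot C=0$, then $m$ kills $P\subseteq A\times C$ componentwise. Moreover $m\cdot B=0$, since $B$ is a retract of $A$: by Lemma \ref{lem:characteristic_section_retraction}, $\mathrm{char}(B)\mid\mathrm{char}(A)$.

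For ``$\Rightarrow$'', suppose $m\cdot P=0$ and $m\cdot B=0$. First, $C$ is a retract of $P$: the map $c\mapsto (s f(c),c)$ lands in $P$ because $rsf=f$, and it is a ring homomorphism. In $\Ring_u$ it is also unital, since $(sf(1),1)=(1,1)$. Its composite with the projection $P\to C$ is the identity, so Lemma \ref{lem:characteristic_section_retraction} gives $\mathrm{char}(C)\mid \mathrm{char}(P)$, hence $m\cdot C=0$. For $A$, I would decompose any $a\in A$ as $a=(a-sr(a))+sr(a)$. The element $(a-sr(a),0)$ lies in $P$ because $r(a-sr(a))=0=f(0)$, so $m\cdot(a-sr(a))=0$. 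Also $m\cdot sr(a)=s(m\cdot r(a))=0$ since $m\cdot B=0$. Therefore $m\cdot a=0$ and $m\cdot A=0$.

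Notice that this argument never uses that $f$ is a section or a retraction, only that $r$ is a retraction. So both cases of Definition \ref{def:monoid-valued-dimension} are covered at once. The only points needing care are that the element $(a-sr(a),0)$ is an element of the pullback rather than a morphism, so no unitality issue arises in $\Ring_u$ or $\Ring_1$, and that the $\infty$ conventions of $\lcm$ match the ``no killing $m$ exists'' case. I expect that bookkeeping, rather than any real obstacle, to be the main thing to check.
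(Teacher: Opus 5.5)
Your argument is correct, and it takes a genuinely different and leaner route than the paper.

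The paper works case by case:
\begin{itemize}
\item In $\Ring_u$ it observes that unit-preserving section--retraction pairs force all four corners of the square to have the same characteristic.
\item In $\Ring_n$ it first assumes $\mathrm{char}(A)$ and $\mathrm{char}(C)$ finite and picks maximal-order elements from Lemma \ref{lem:maximal_order_elem}. It then treats the case of $f$ a retraction (lifting $r(x_A)$ through the surjection $f$) and the case of $f$ a section (splitting $x_A=sr(x_A)+z_A$ with $(z_A,0)$ in the pullback) separately. Infinite characteristic is handled by an additional WLOG argument.
\item $\Ring_1$ is deduced from Corollary \ref{cor:adjunctions_full_subcats_and_dimensions}.
\end{itemize}
You instead translate equality in $(\mathbb N_\infty^*,\lcm)$ into equality of two sets: the positive integers annihilating both $P$ and $B$, and those annihilating both $A$ and $C$. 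This absorbs the $\infty$ convention automatically. You then apply the paper's splitting to every $a\in A$ rather than to one maximal-order element, together with the graph map $c\mapsto(sf(c),c)$.

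This buys several things:
\begin{itemize}
\item There is no case split on $f$; you actually prove the formula for the set-theoretic pullback of an arbitrary $f$ along a retraction.
\item There is no separate infinite case and no separate treatment of the three categories.
\item The proof is independent of Lemma \ref{lem:maximal_order_elem} and the structure theory of bounded abelian groups. Your two appeals to Lemma \ref{lem:characteristic_section_retraction} can be replaced by the one-liners $m\cdot B=r(m\cdot A)$ and $m\cdot(sf(c),c)=0\Rightarrow m\cdot c=0$.
\end{itemize}

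One caveat applies equally to the paper's reduction to $\Ring_1$ through Corollary \ref{cor:adjunctions_full_subcats_and_dimensions}. Lemma \ref{lem:pullbacks_of_rings} is only reliable in $\Ring_1$ when the set-theoretic pullback $P$ is unital, because a full inclusion reflects pullbacks but need not preserve them.
\begin{itemize}
\item When $f$ is a retraction this is automatic: surjections between unital rings preserve units, so $(1_A,1_C)\in P$.
\item A section, however, need not preserve units. For $f\colon 0\to\mathbb Z$ and $r=\mathrm{ev}_0\colon\mathbb Z[x]\to\mathbb Z$ one gets $P\cong x\mathbb Z[x]$, whereas the pullback in $\Ring_1$ is the zero ring.
\end{itemize}
The formula still holds for the $\Ring_1$-pullback $Q$, but it takes one more step in each direction.
\begin{itemize}
\item The universal property of $Q$ gives $\mathrm{char}(Q)\mid\mathrm{char}(P)$. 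If $n\cdot P=0$, the comparison map $Q\to P$ factors through $Q/nQ$, and uniqueness forces $n\cdot Q=0$.
\item Conversely, set $e=s(1_B)$. The ring maps $c\mapsto(sf(c),c)$ on $C$ and $y\mapsto(y,0)$ on the unital corner $(1_A-e)A(1_A-e)$ both factor through $Q$. So any $m$ killing $Q$ and $B$ kills $C$, $1_A-e$ and $e$, hence all of $A$.
\end{itemize}
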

\begin{proof}
    We will first check the criteria of Definition \ref{def:monoid-valued-dimension} for $\Ring_u$ and then for $\Ring_1$.

    In $\Ring_u$, given any morphism $f: R \to R'$, the characteristic can not increase $\mathrm{char}(R')\leq \mathrm{char}(R) $ since $ 0 = f(0) = f(\mathrm{char}(R) \cdot 1)$. Thus, given a section retraction pair between $R$ and $R'$, $\mathrm{char}(R) = \mathrm{char}(R')$. Now let 
    \[\begin{tikzcd}
    	{A \times_B C} & A \\
    	C & B
    	\arrow[from=1-1, to=1-2]
    	\arrow[from=1-1, to=2-1]
    	\arrow["{r}", from=1-2, to=2-2]
    	\arrow["{f}"', from=2-1, to=2-2]
    \end{tikzcd}\]
    be any of the pullbacks from Definition \ref{def:monoid-valued-dimension} in $\Ring_u$.
    Then $f$ and $r$ are both parts of section-retraction pairs and the characteristic of all 4 vertices is the same. Therefore
    $$
    \lcm(\mathrm{char}(A), \mathrm{char}(C)) = \lcm(\mathrm{char}(A \times_B C) , \mathrm{char}(B))
    $$
    holds trivially.

    Now let 
    \[\begin{tikzcd}
    	{A \times_B C} & A \\
    	C & B
    	\arrow[from=1-1, to=1-2]
    	\arrow[from=1-1, to=2-1]
    	\arrow["{r}", from=1-2, to=2-2]
    	\arrow["{f}"', from=2-1, to=2-2]
    \end{tikzcd}\]
    be any of the pullbacks from Definition \ref{def:tangent_dimension} in $\Ring_n$. 
    Assuming that $A$ and $C$ have finite characteristic, let $x_A \in A$ and $x_C \in C$ be the maximal order elements constructed in Lemma \ref{lem:maximal_order_elem}.
    We will distinguish the cases when $f$ is a section and when $f$ is a retraction.

    \textit{Case 1:} Suppose $f$ is a retraction, in particular it is surjective. Then there is a $y_C \in C$ such that $f(y_c) = r(x_A)$ and thus we have $(x_a, y_C) \in A \times_B C$ such that for $k\leq \mathrm{char}(A)$, $k \cdot (x_A, y_C) = (k \cdot x_A , k \cdot y_C) \neq 0$. Thus $\mathrm{char}(A \times_B C)$ is a multiple of $\mathrm{char}(A)$.

    Analogously there is an $y_A \in A$ such that $(y_A, x_C) \in A \times_B C$ and thus $\mathrm{char}(A \times_B C)$ is a multiple of $\mathrm{char}(C)$. This shows that $\mathrm{char}(A \times_B C)$ is a multiple of $\lcm(\mathrm{char}(A), \mathrm{char}(C))$.
    Since $A \times_B C$ is a subset of $A \times C$, the equation
    $$
    \mathrm{char}(A \times_B C) = \lcm(\mathrm{char}(A), \mathrm{char}(C))
    $$
    holds. Due to Lemma \ref{lem:characteristic_section_retraction}, $\mathrm{char}(B)$ divides $\mathrm{char}(A)$, implying that
    $$
    \lcm(\mathrm{char}(A \times_B C), \mathrm{char(B)}) = \lcm( \lcm(\mathrm{char}(A), \mathrm{char}(C)),\mathrm{char}(B)) = \lcm(\mathrm{char}(A), \mathrm{char}(C)).
    $$
    This is exactly the equation we needed to show for Definition \ref{def:monoid-valued-dimension}.

    \textit{Case 2:} Suppose $f$ is a section. Then, due to Lemma \ref{lem:characteristic_section_retraction}, $\mathrm{char}(B)$ and $\mathrm{char}(C)$ both divide $\mathrm{char}(A)$. 
    
    Let $s: B \to A$ be a section for the retraction $r$. We can express the maximal order element $x_A \in A$ as $x_A = s(r(x_a))+ z_A$ with $r(z_A) = 0$, by choosing $z_A = x_A - s(r(x_A))$.

    For $k = \lcm(\mathrm{char}(A \times_B C), \mathrm{char}(B))$, we observe that 
    $$
    k \cdot x_A = k \cdot s(r(x_A)) + k \cdot z_A.
    $$
    Since $k$ is a multiple of $\mathrm{char}(B)$, $k \cdot s(r(x_A)) = s(k \cdot r(x_A)) = 0$. 
    Since $r(z_A) = 0$, we have the element $(z_A ,0) \in A \times_B C$ and since $k$ is a mutiple of $\mathrm{char}(A \times_B C)$, $k \cdot (z_A,0) = 0$, implying that $k \cdot z_A =0$.

    Thus $k \cdot x_A = 0$ and since $x_A$ had maximal order this shows that $\mathrm{char}(A)$ divides $\lcm(\mathrm{char}(A \times_B C), \mathrm{char}(B))$. Since $\mathrm{char}(C)$ divides $\mathrm{char}(A)$, the equation $\lcm(\mathrm{char}(A), \mathrm{char}(C)) = \mathrm{char}(A)$ holds, implying that $\lcm(\mathrm{char}(A), \mathrm{char}(C)) = \mathrm{char}(A)$ divides $\lcm(\mathrm{char}(A \times_B C), \mathrm{char}(B))$. Since $A \times_B C$ is a subset of $A \times C$, this shows that 
    $$
    \lcm(\mathrm{char}(A \times_B C), \mathrm{char}(B)) = \lcm(\mathrm{char}(A), \mathrm{char}(C)).
    $$
    This is exactly the equation we needed to show for Definition \ref{def:monoid-valued-dimension}.

    Now it only remains to show that the equation 
    $$
    \lcm(\mathrm{char}(A \times_B C), \mathrm{char}(B)) = \lcm(\mathrm{char}(A), \mathrm{char}(C)).
    $$
    also holds when $\mathrm{char}(A)$ or $\mathrm{char}(C)$ are infinite. If $\mathrm{char}(B)=\infty$, it holds trivially. Suppose $\mathrm{char}(B)$ is finite. Since in the case that $f$ is a section $\mathrm{char}(C) \leq \mathrm{char}(B)$, and in the case that $f$ is a retraction, $C$ and $A$ can be swapped, we can assume without loss of generality that $\mathrm{char}(A) = \infty$. 

    Let $n \in \mathbb N\setminus\{0\} $ be arbitrary. Let $x_n \in A$ be such that $n \cdot \mathrm{char}(B) \cdot x_n \neq 0$. Then $r(\mathrm{char}(B) \cdot x_n) = 0$ and thus $(\mathrm{char}(B) \cdot x_n, 0) \in A \times_B C$ fulfills that 
    $$
    n \cdot (\mathrm{char}(B) \cdot x_n, 0) \neq 0.
    $$
    Since $n$ was arbitrary, this shows that $\mathrm{char}(A \times_B C) = \infty$ and thus 
    $$
    \lcm(\mathrm{char}(A \times_B C), \mathrm{char}(B)) = \lcm(\mathrm{char}(A), \mathrm{char}(C)).
    $$
    holds. This concludes the proof that $\mathrm{char}$ is a $(\mathbb N^*_\infty, \lcm)$-valued dimension on $\Ring_n$. Since $\Ring_1$ is a full subcategory of $\Ring_n$, Corollary \ref{cor:adjunctions_full_subcats_and_dimensions} implies that it also is a $(\mathbb N^*_\infty, \lcm)$-valued dimension on $\Ring_1$.
\end{proof}
\begin{corollary}\label{cor:characteristic_as_dim_on_CRing}
    The characteristic $\mathrm{char}$ is an $(\mathbb N_\infty^*, \lcm )$-valued dimension on $\CRing_1$ and on $\CRing_u$. 
\end{corollary}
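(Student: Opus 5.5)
The plan is to deduce this from Proposition \ref{prop:characteristic_as_dim_on_Ring} by restricting to full subcategories, using Corollary \ref{cor:adjunctions_full_subcats_and_dimensions}(b). First, $\CRing_1$ is by definition the full subcategory of $\Ring_1$ on the commutative unital rings. Second, $\CRing_u$ is the full subcategory of $\Ring_u$ on the commutative objects. Since $\mathrm{char}$ is already an $(\mathbb N_\infty^*, \lcm)$-valued dimension on $\Ring_1$ and on $\Ring_u$, the corollary gives the claim in both cases.

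The one point I would check explicitly is that restricting to a full subcategory does preserve the hypotheses of Definition \ref{def:monoid-valued-dimension}. There are three things to verify.
\begin{enumerate}
\item \emph{Sections and retractions.} A section–retraction pair in the subcategory is also one in the ambient category, because $r\circ s = 1$ is an equation between morphisms and does not depend on which category we view it in.
\item \emph{Pullbacks.} A pullback computed in the subcategory must also be a pullback in the ambient category. For this I would use Lemma \ref{lem:pullbacks_of_rings}: in all six categories the pullback, when it exists, is the set-theoretic fibre product with componentwise operations. So the same ring serves as the pullback in both places.
\item \emph{Closure under the construction.} If the three given rings are commutative, then the fibre product, being a subring of a product of commutative rings, is commutative as well.
\end{enumerate}
With these checks the equation $\lcm(\mathrm{char}(A\times_B C),\mathrm{char}(B)) = \lcm(\mathrm{char}(A),\mathrm{char}(C))$ is inherited directly from the ambient category.

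The main obstacle is a mismatch in the proof of Proposition \ref{prop:characteristic_as_dim_on_Ring}. It argues $\Ring_u$ directly, but it reaches $\Ring_1$ as a full subcategory of $\Ring_n$. So the case of $\CRing_1$ should go through the chain $\CRing_1 \subseteq \Ring_1 \subseteq \Ring_n$ of full subcategories, which is harmless. The case of $\CRing_u$ cannot go through $\Ring_n$, because $\CRing_u$ is not full in $\Ring_n$; it should go through $\Ring_u$ instead. As a fallback, the $\Ring_u$ argument applies verbatim to $\CRing_u$. Unit-preserving morphisms cannot increase the characteristic, so all four corners of a section/retraction pullback have equal characteristic, and the $\lcm$ equation then holds trivially.
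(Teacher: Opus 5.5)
Your route is the paper's own: restrict Proposition~\ref{prop:characteristic_as_dim_on_Ring} along full inclusions via Corollary~\ref{cor:adjunctions_full_subcats_and_dimensions}(b). You correctly identify that the real issue is whether pullbacks in the subcategory are pullbacks in the ambient category.

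For $\CRing_u$ everything works. The fibre product of unit-preserving maps is unital, so it is the pullback in both $\CRing_u$ and $\Ring_u$. Your fallback argument is then a complete proof. For the corner $A\times_B C$, add that its projection to $C$ is a pullback of the split epi $r$, hence split, so it too has the common characteristic.

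The gap is step 2 for $\CRing_1$. There a section $f$ need not preserve units, so the fibre product $\{(a,c): r(a)=f(c)\}$ need not be unital. It is then \emph{not} the pullback in $\CRing_1$, contrary to what Lemma~\ref{lem:pullbacks_of_rings} asserts. For example, take
\begin{itemize}
\item $A=\mathbb Z/2\times(\mathbb Z/4)[\epsilon]/(\epsilon^2)$ and $B=\mathbb Z/2\times\mathbb Z/4$, with $r$ killing $\epsilon$ (split by the inclusion);
\item $C=\mathbb Z/2$ and $f(x)=(x,0)$ (split by the first projection).
\end{itemize}
The fibre product is the ring $\mathbb Z/2\times\epsilon\,\mathbb Z/4$ with square-zero second factor. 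It is non-unital and has characteristic $4$. Every ring map from a unital ring into a square-zero ring vanishes, so the pullback in $\CRing_1$ is $\mathbb Z/2$, of characteristic $2$. Thus ``the same ring serves as the pullback in both places'' is false. The step $\Ring_1\subseteq\Ring_n$ that you call harmless is exactly where the inheritance breaks. The paper's one-line proof has the same defect: full inclusions reflect pullbacks but need not preserve them. The identity still holds in this example, $\lcm(2,4)=\lcm(4,2)$, so the statement is fine but needs a direct argument in $\CRing_1$.

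Here is one such argument.
\begin{itemize}
\item \emph{$f$ a retraction.} Then $r$ and $f$ are surjective, hence unit-preserving. So $(1_A,1_C)$ is a unit of the fibre product, which is therefore the pullback, of characteristic $\mathrm{ord}(1_A,1_C)=\lcm(\mathrm{char}(A),\mathrm{char}(C))$. Also $\mathrm{char}(B)\mid\mathrm{char}(A)$, which gives the identity.
\item \emph{$f$ a section with retraction $g$.} Let $P$ be the pullback in $\CRing_1$.
\begin{itemize}
\item For $x$ in the kernel of $P\to A\times C$, the unital maps $\mathbb Z[t]\to P$ with $t\mapsto x$ and $t\mapsto 0$ have equal composites with both projections, so $x=0$. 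The unit-preserving surjections $g$ and $r$ give $\mathrm{char}(C)\mid\mathrm{char}(B)\mid\mathrm{char}(A)$. Hence $\mathrm{char}(P)$ divides $\lcm(\mathrm{char}(A),\mathrm{char}(C))=\mathrm{char}(A)$.
\item Conversely, $\epsilon=1_A-s(1_B)$ is an idempotent in $\ker r$. The cone $(\epsilon A\hookrightarrow A,\ 0)$ from the unital ring $\epsilon A$ factors through $P$, so $\mathrm{ord}(\epsilon)\mid\mathrm{char}(P)$.
\item Since $1_A=s(1_B)+\epsilon$ is a sum of orthogonal idempotents and $s$ is injective, $\mathrm{char}(A)=\lcm(\mathrm{char}(B),\mathrm{ord}(\epsilon))$. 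This divides $\lcm(\mathrm{char}(P),\mathrm{char}(B))$.
\end{itemize}
Hence both sides of the required identity equal $\mathrm{char}(A)$.
\end{itemize}
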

\begin{proof}
    This follows directly from Proposition \ref{prop:characteristic_as_dim_on_Ring} and Corollary \ref{cor:adjunctions_full_subcats_and_dimensions}.
\end{proof}

As a consequence, the characteristic of the tangent bundle is now determined in $\Ring_1$ and $\Ring_u$. However, the characteristic of a tangent bundle in $\Ring_u$ is determined anyways.
\begin{proposition}
    Let $(T,p,0,+, \ell , c)$ be a tangent structure on $\Ring_u$. Then for any unital ring $R$, 
    $$
    \mathrm{char}(T(R)) = \mathrm{char}(R).
    $$
\end{proposition}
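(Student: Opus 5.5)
The plan is to avoid the dimension machinery entirely and use the section–retraction pair $0_R : R \to T(R)$, $p_R : T(R) \to R$ directly, together with the observation made in the proof of Proposition \ref{prop:characteristic_as_dim_on_Ring}. That observation is that in $\Ring_u$ a unit-preserving homomorphism can never increase the characteristic.

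First, I would recall the argument. If $g : S \to S'$ is a morphism in $\Ring_u$ and $\mathrm{char}(S) = n$ is finite, then
$$
n \cdot 1_{S'} = n \cdot g(1_S) = g(n \cdot 1_S) = 0 .
$$
In a unital ring, $n \cdot x = (n \cdot 1) x$, so $n \cdot x = 0$ for all $x \in S'$. Hence $\mathrm{char}(S') \le \mathrm{char}(S)$; in fact $\mathrm{char}(S')$ divides $\mathrm{char}(S)$. If $\mathrm{char}(S) = \infty$, the inequality $\mathrm{char}(S') \le \infty$ is trivial.

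Second, I would apply this to the two components of the tangent structure at $R$. These are morphisms of $\Ring_u$ because $T$ is an endofunctor of $\Ring_u$ and $p, 0$ are natural transformations of it.
\begin{itemize}
\item Applying it to $p_R : T(R) \to R$ gives $\mathrm{char}(R) \le \mathrm{char}(T(R))$.
\item Applying it to $0_R : R \to T(R)$ gives $\mathrm{char}(T(R)) \le \mathrm{char}(R)$.
\end{itemize}
Combining the two inequalities yields $\mathrm{char}(T(R)) = \mathrm{char}(R)$, with the convention that $\infty = \infty$ in the infinite case. Note that the identity $p_R \circ 0_R = 1_R$ is not even needed; only the existence of morphisms in both directions matters.

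Alternatively, one could invoke Lemma \ref{lem:characteristic_section_retraction} for the inequality $\mathrm{char}(R) \le \mathrm{char}(T(R))$, and use the unit argument only for the reverse inequality.

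There is essentially no obstacle here. The only point needing care is the infinite-characteristic case, where "divides" should be read as equality being automatic. In the finite case with $n = \mathrm{char}(R)$, one direction shows $n \cdot 1_{T(R)} = 0$. The other direction shows that $\mathrm{char}(T(R)) \cdot 1_R = 0$, so $n \le \mathrm{char}(T(R))$. This also covers the possibility that $\mathrm{char}(T(R)) = \infty$ while $\mathrm{char}(R)$ is finite: that case is excluded by the first direction. I would remark that this is why the paper says the characteristic of the tangent bundle in $\Ring_u$ "is determined anyways", independently of Theorem \ref{thm:dimension_result_weak}.
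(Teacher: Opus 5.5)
Your proof is correct and is the same as the paper's. Both arguments use that a unit-preserving homomorphism cannot increase the characteristic, and apply this to the two morphisms $0_R : R \to T(R)$ and $p_R : T(R) \to R$; the paper's argument likewise uses only the existence of morphisms in both directions, not the identity $p_R \circ 0_R = 1_R$.
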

\begin{proof}
    This simply follows from the fact that any unit-preserving ring homomorphism can only decrease characteristic. Since both $R \xrightarrow{0} T(R)$ and $T(R) \xrightarrow{p} R$ are unit-preserving ring homomorphisms, the characteristics have to equal.
\end{proof}
For $\Ring_1$, however the dimension brings an additional insight.
\begin{corollary}\label{cor:char_of_t2_lcm}
        Let $(T,p,0,+, \ell , c)$ be a tangent structure on $\Ring_n$ or $\Ring_1$.
        Then for every object $R$,
        $$
        \lcm (\mathrm{char}(T^2(R)) , \mathrm{char}(R) ) = \mathrm{char}(T(R)) 
        $$
\end{corollary}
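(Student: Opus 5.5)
We will apply Theorem \ref{thm:dimension_result_weak} to the characteristic, which Proposition \ref{prop:characteristic_as_dim_on_Ring} shows to be an $(\mathbb N_\infty^*,\lcm)$-valued dimension on $\Ring_n$ and $\Ring_1$. Every dimension is a tangent dimension, so the theorem applies to any tangent structure on these categories. Since the monoid operation is $\lcm$, the expression $k\cdot m$ in the monoid means $\lcm(m,\dots,m)=m$. Every element of $(\mathbb N_\infty^*,\lcm)$ is therefore idempotent, and the equation of Theorem \ref{thm:dimension_result_weak} becomes
$$
\lcm(\mathrm{char}(T^2(R)),\mathrm{char}(R)) = \mathrm{char}(T(R)).
$$
This is exactly the claimed identity.

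To make the argument transparent, we will also retrace the two intermediate equations from the proof of Theorem \ref{thm:dimension_result_weak}. Both use the section--retraction pair $0_R,p_R$. The pullback defining $T_2R$ gives $\lcm(\mathrm{char}(T_2R),\mathrm{char}(R))=\mathrm{char}(TR)$. The universality of the vertical lift gives $\lcm(\mathrm{char}(T_2R),\mathrm{char}(TR))=\lcm(\mathrm{char}(T^2R),\mathrm{char}(R))$.

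Combining them, the left side of the second equation equals $\lcm(\mathrm{char}(T_2R),\mathrm{char}(R),\mathrm{char}(TR))$. By Lemma \ref{lem:characteristic_section_retraction} applied to $0_R$, $\mathrm{char}(R)$ divides $\mathrm{char}(TR)$, so adding $\mathrm{char}(R)$ to this $\lcm$ changes nothing. By the first equation the expression is then $\lcm(\mathrm{char}(TR),\mathrm{char}(TR))=\mathrm{char}(TR)$, which yields the result.

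The only delicate point is making sure the hypotheses of Definition \ref{def:monoid-valued-dimension} hold for both squares. For the $T_2R$ square, $p$ is pulled back along the retraction $p$. For the vertical-lift square, the section $0_R$ is pulled back along $T(p_R)$, which is a retraction with section $T(0_R)$ by functoriality. Infinite characteristics need no separate treatment, since $\infty$ is absorbing for $\lcm$ and the divisibility conventions remain valid there.
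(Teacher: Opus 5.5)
Your proposal is correct and is the paper's own argument: the paper proves this corollary in one line by applying Theorem \ref{thm:dimension_result_weak} to the $(\mathbb N_\infty^*,\lcm)$-valued characteristic of Proposition \ref{prop:characteristic_as_dim_on_Ring}, leaving implicit the idempotence of $\lcm$ that turns $2\cdot$ and $3\cdot$ into the identity. Your retracing of the two intermediate pullback equations, together with your check that both squares have the admissible section/retraction form, only makes explicit what the paper leaves to the reader (the appeal to Lemma \ref{lem:characteristic_section_retraction} is harmless but unnecessary, since $\mathrm{char}(R)\mid\mathrm{char}(TR)$ already follows from your first equation).
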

\begin{proof}
    This is Theorem \ref{thm:dimension_result_weak} applied to the dimension defined in Proposition \ref{prop:characteristic_as_dim_on_Ring}.
\end{proof}
However, since we have a section retraction pair $R \xrightarrow{0} T(R) , T(R) \xrightarrow{p} R$, we can even conclude more.
\begin{corollary}\label{cor:tangent_structures_on_Ring_n}
    Let $(T,p,0,+, \ell , c)$ be a tangent structure on $\Ring_n$ or $\Ring_1$.
        Then for every object $R$ there is a number $n_R$ such that
        $$
         \mathrm{char} (T(R)) = n_R \cdot \mathrm{char} (R)
        $$
        and
        $$
        \mathrm{char}(T^2(R)) = \mathrm{char} (T(R)).
        $$
\end{corollary}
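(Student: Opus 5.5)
The plan is to combine Corollary \ref{cor:char_of_t2_lcm} with Lemma \ref{lem:characteristic_section_retraction}, applied to two section–retraction pairs supplied by the tangent structure.

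First, the pair $0_R : R \to T(R)$ and $p_R : T(R) \to R$ satisfies $p_R \circ 0_R = 1_R$. Lemma \ref{lem:characteristic_section_retraction} then says that $\mathrm{char}(T(R))$ is a multiple of $\mathrm{char}(R)$. This gives the number $n_R$ with $\mathrm{char}(T(R)) = n_R \cdot \mathrm{char}(R)$. In the infinite case we read the lemma as forcing $\mathrm{char}(T(R)) = \infty$, and we take $n_R = \infty$ (or any value) under the convention of $\mathbb N^*_\infty$.

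Second, apply the same reasoning one level up. The pair $0_{T(R)} : T(R) \to T^2(R)$ and $p_{T(R)} : T^2(R) \to T(R)$ is a section–retraction pair, because $p_{T(R)} \circ 0_{T(R)} = 1_{T(R)}$. (Alternatively, $T(0_R), T(p_R)$ works by functoriality.) Lemma \ref{lem:characteristic_section_retraction} then shows that $\mathrm{char}(T(R))$ divides $\mathrm{char}(T^2(R))$.

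Third, invoke Corollary \ref{cor:char_of_t2_lcm}: $\lcm(\mathrm{char}(T^2(R)),\mathrm{char}(R)) = \mathrm{char}(T(R))$. In particular, $\mathrm{char}(T^2(R))$ divides $\mathrm{char}(T(R))$. Combined with the second step, two positive integers dividing each other must be equal, so $\mathrm{char}(T^2(R)) = \mathrm{char}(T(R))$.

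For the infinite cases the argument needs separate handling. If $\mathrm{char}(T(R)) = \infty$, then the second step (Case 1 of Lemma \ref{lem:characteristic_section_retraction}) gives $\mathrm{char}(T^2(R)) = \infty$. If $\mathrm{char}(T(R))$ is finite, then the $\lcm$ identity forces $\mathrm{char}(T^2(R))$ to be finite, and the divisibility argument applies.

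The only step needing care is this bookkeeping of $\infty$ in the divisibility relations, together with making sure the section–retraction pairs live in the right category. In $\Ring_1$ the maps $0$ and $p$ are morphisms of $\Ring_1$, hence of $\Ring_n$, which is where Lemma \ref{lem:characteristic_section_retraction} is stated. Otherwise the argument is a direct combination of the two cited results.
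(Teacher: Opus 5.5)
Your proof is correct. It shares the paper's skeleton: two opposite divisibilities between $\mathrm{char}(T(R))$ and $\mathrm{char}(T^2(R))$, then antisymmetry. It differs in where those divisibilities come from.

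The paper extracts everything from the $\lcm$ identity of Corollary \ref{cor:char_of_t2_lcm}. Applied at $R$, it gives both $\mathrm{char}(R)\mid\mathrm{char}(T(R))$ and $\mathrm{char}(T^2(R))\mid\mathrm{char}(T(R))$. Applied again at $T(R)$, where it involves $T^3(R)$, it gives $\mathrm{char}(T(R))\mid\mathrm{char}(T^2(R))$.

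You obtain the two upward divisibilities from Lemma \ref{lem:characteristic_section_retraction}, using the pairs $(0_R,p_R)$ and $(0_{T(R)},p_{T(R)})$. You invoke the $\lcm$ identity only once, for the downward divisibility $\mathrm{char}(T^2(R))\mid\mathrm{char}(T(R))$.

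Your route separates the elementary part from the real input. The existence of $n_R$ and $\mathrm{char}(T(R))\mid\mathrm{char}(T^2(R))$ hold as soon as $p\circ 0=1$. Only the downward divisibility genuinely uses the universality of the vertical lift, through the dimension. Your explicit treatment of the $\infty$ cases also supplies bookkeeping the paper leaves implicit.

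The paper's route is shorter and uses a single tool uniformly.
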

\begin{proof}
    Corollary \ref{cor:char_of_t2_lcm} implies that the characteristic of $T(R)$ is a multiple of the characteristic of $R$.
    
    Corollary \ref{cor:char_of_t2_lcm} also implies that it is a multiple of the characteristic of $T^2(R)$.
    Due to the same argument applied to $T(R)$ instead of $R$, $\mathrm{char}(T^2(R))$ is a multiple of $\mathrm{char}(T(R))$. This implies that $\mathrm{char}(T(R)) = \mathrm{char}(T^2(R))$.
\end{proof}

In \cite{rosicky} an example of a tangent structure on $\CRing_u$ is constructed.
\begin{example}\label{ex:tangent_on_ring}\cite[Example 2]{rosicky}
    For the category $\CRing_u$ of unital commutative rings and unit-preserving ring homomorphisms, the there is a tangent structure on $\CRing_u$ where
    the tangent bundle endofunctor $T$ sends a ring $R$ to $R[x]/(x^2)$.
\end{example}
Indeed, this tangent bundle endofunctor preserves the characteristic: $\mathrm{char}(T(R)) = \mathrm{char}(R)$.

All pullbacks of $\CRing_u$ are squares of ring homomorphisms that are pullbacks of underlying sets. Thus all pullbacks of $\CRing_u$ are also pullbacks in $\CRing_1$, implying that any tangent structure on $\CRing_u$ is also tangent structure on $\CRing_1$. Thus \ref{ex:tangent_on_ring} is also an example of a tangent structure on $\CRing_1$ that preserves the characteristic.

It is unclear if an example of a tangent structure on $\Ring_n$, $\CRing_n$, $\Ring_1$ or $\CRing_1$  exists where the tangent bundle endofunctor does not preserve the characteristic. If it exists, it needs to fulfill the requirement of Corollary \ref{cor:tangent_structures_on_Ring_n}. 

\subsection{Krull dimension satisfying products but not pullbacks}
The term ``dimension of a ring" is commonly used for the Krull dimension of a commutative ring, which is defined through ascending chains of prime ideals and has extensive applications in algebraic geometry. Since 
$$
\dim(R \times R') = \max(\dim(R), \dim(R')),
$$
desirable property \ref{item:cartesian_products_to_sums} (additive w.r.t. products) of Remark \ref{rem:wishlist_dimension} holds when choosing $\max$ as the monoid structure.

However, we will show that the pullback condition of Definition \ref{def:monoid-valued-dimension} does not hold and it thus is not a dimension on $\CRing_u$.
\begin{definition}\label{def:Krull_dimension}\cite[Page 217]{Eisenbud:CommutativeAlgebrawithaViewTowardAlgebraicGeometry}
    Given a commutative ring $R$ with unit, its Krull dimension is the length $n$ of the longest chain of prime ideals
    $$
    0 \subsetneq p_1 \subsetneq ... \subsetneq p_n \subsetneq R.
    $$
\end{definition}
The Krull dimension fulfills some useful properties.
    \begin{enumerate}[label =(\alph*)]
        \item Let $F$ be a field. Then 
        $
        \dim (F[x_1, ... ,x_n]) = n.
        $
        \item Let $R, R'$ be commutative rings then $\dim(R \times R') = \max(\dim(R), \dim(R'))$.
    \end{enumerate}

We show here that this is not an $\mathbb N_\infty$ valued dimension on $\CRing_u$ in the sense of Definition \ref{def:monoid-valued-dimension}.
The following examples will show that it is a dimension valued in none of the monoid structures on $\mathbb N_\infty$ defined in \ref{def:monoids_N_infty}.


The following example will now show that it can not be an $\mathbb (N_\infty , \max)$-valued dimension.
\begin{example}\label{ex:Krull_not_max}
    The diagram 
\begin{equation}
\begin{tikzcd}
{\mathbb Q [y]} & {\mathbb Q[x,y] / (x y)} \\
	{\mathbb Q} & {\mathbb Q[x]}
	\arrow[from=1-1, to=1-2]
	\arrow[from=1-1, to=2-1]
	\arrow["\lrcorner"{anchor=center, pos=0.125}, draw=none, from=1-1, to=2-2]
	\arrow[two heads, from=1-2, to=2-2]
	\arrow[hook, from=2-1, to=2-2]
\end{tikzcd}\label{diag:Krull_not_max}
\end{equation}
    is a pullback in $\CRing_u$, where the map $\mathbb Q \hookrightarrow \mathbb Q[x]$ includes a number as the constant term and the map $\mathbb Q[x,y]/(xy) \to \mathbb Q [x]$ evaluates at $y=0$. It is a pullback because giving a pair $(r,p)$ of $r \in \mathbb Q$ and $p\in \mathbb Q[x,y]/(xy)$ such that $p(x,0) = r$ is the same as giving a polynomial $p \in \mathbb C[x,y]/(xy)$ with no $x$-terms, i.e. a polynomial $p \in \mathbb C[y]$.
\end{example}
The very same example shows that it is not a categorical dimension on $\CRing_1$.

For a field $F$, the coproduct $F[x_1 ,...,x_n] \otimes F[x_1 ,...,x_k] = F[x_1 , ... , x_{n+k}]$. Since $\dim(F[x_1 ,...,x_n]) = n$, this suggests that for the opposite category $\CRing_u^\mathrm{op}$ desirable property \ref{item:cartesian_products_to_sums} (additive w.r.t. products) from Remark \ref{rem:wishlist_dimension} holds for  $(\mathbb N_\infty , +)$.

The context of algebraic geometry would also suggest that the Krull dimension is a categorical dimension on $\CRing_u^\mathrm{op}$. The category $\CRing_u^\mathrm{op}$ is equivalent to the category of affine schemes and for smooth affine schemes the scheme-theoretical dimension equals the Krull dimension. However, using the counterexample in Example \ref{ex:not_krull_on_ring_op} that corresponds to a pullback of non-smooth schemes, we will show that it still is not a Dimension in the sense of Definition \ref{def:monoid-valued-dimension}. 

\begin{example}\label{ex:not_krull_on_ring_op}
    Let $\mathbb Q$ denote the field of rational numbers. Let $i'$ denote the embedding $\mathbb Q[x] \hookrightarrow \frac{\mathbb Q[x,y]}{\langle x \cdot y\rangle}$. The diagram
    \[\begin{tikzcd}
    	{\mathbb Q[x,y] \over \langle x \cdot y\rangle} && {\mathbb Q[x,y,z] \over \langle x \cdot y\rangle} \\
    	{\mathbb Q[x]} && {\mathbb Q[x,y,z] \over \langle x \cdot y\rangle}
    	\arrow["i", hook, from=1-1, to=1-3]
    	\arrow["{\mathrm{ev_0}}"', two heads, from=1-1, to=2-1]
    	\arrow["1", from=1-3, to=2-3]
    	\arrow["{i \circ i'}"', from=2-1, to=2-3]
    \end{tikzcd}\]
     is a pushout square in $\CRing$ and thus corresponds to a pullback square in $\CRing^\mathrm{op}$. However, the Krulld dimensions of the rings involved are
     $$
     \dim(\mathbb Q[x]) = 1 ~,~ \dim\left({\mathbb Q[x,y,z] \over \langle x \cdot y\rangle}\right) = 3 ~,~ \dim\left({\mathbb Q[x,y] \over \langle x \cdot y\rangle}\right)=2
     $$
     and thus
     $$
    \dim\left({\mathbb Q[x,y,z] \over \langle x \cdot y\rangle}\right) + \dim(\mathbb Q[x]) = 3+1 \neq 3+2 = \dim\left({\mathbb Q[x,y,z] \over \langle x \cdot y\rangle}\right) + \dim\left({\mathbb Q[x,y] \over \langle x \cdot y\rangle}\right).
    $$
\end{example}

\begin{remark}
    The category of affine schemes is equivalent to the opposite category of $\CRing_u$ and the Krull dimension corresponds to the dimension of a scheme. However, the dimension of a scheme is only well-behaved with respect to the tangent structure for smooth schemes over a field, as one can read in \cite[Chapter 13]{Vakil_rising_sea}.
    
    Thus the geometric intuition suggests that the Krull dimension is an $(\mathbb N,+)$-valued dimension in the sense of Definition \ref{def:monoid-valued-dimension} on the smooth objects of $\CRing_u^\mathrm{op}$. We suspect this is true, but the proof will require technical details from algebraic geometry which are beyond the scope of this paper. 
\end{remark}

\section{Transporting a dimension between modules and algebras with an adjunction}\label{sec:modules}

In this section we will give an example of a categorical dimension on the category of $R$-modules and transport this dimension to the category of $R$-algebras using an andjunction.

First, we will show that the rank of an $R$-modules is a categorical dimension on the category $\Mod_R$ of $R$-modules and $R$-modules homomorphisms for any PID, $R$. In particular, this shows that the dimension of a vector space is a categorical dimension on the category $\Vect_F$ for a field, $F$. As a consequence, a tangent structure on $\Mod_R$ must fullfil the equation in Corollary \ref{cor:tangent_structures_on_Mod}.

Next, we show that using the free-forgetful adjunction between $R$-algebras and $R$-modules, we obtain a category dimension on $R$-algebras.  Finally, as an additional observation, we use the categorical dimension on $\Vect$ and show that there can only be two tangent structures on $\Vect$, and we identify them.


In \cite{maclane:71}[Section IV.2] Mac Lane shows that there is a free forgetful adjunction between $\Mod_R$ and $\AbGrp$. Thus, by Corollary \ref{cor:adjunctions_full_subcats_and_dimensions}, the cardinality of the underlying abelian group is already an $(\mathbb N_\infty, \cdot)$-valued dimension on $\Mod_R$. In Section \ref{subsec:rank_of_a_module} we will show that the rank is a second categorical dimension on $\Mod_R$.

\begin{example}\label{ex:degenerate_module_tangent}
The category $\Mod_R$ has a somewhat degenerate tangent structure given for an $R$-module $V$ by
\begin{itemize}
    \item the tangent bundle endofunctor $T(V) = V \times V$ with $T(f) = f \times f$,
    \item the projection $p_V = \pi_0: V \times V \to V$,
    \item the zero section $0_V = \langle 1_V, \mathrm{const}_0 \rangle: V \to V \times V$,
    \item the addition $+_V = \langle \pi_0, \mathrm{sum}(\pi_1, \pi_2)\rangle $, where $\mathrm{sum}(v,w) = v+w$,
    \item the vertical lift $\ell_V = \langle \pi_0, \mathrm{const}_0 , \mathrm{const}_0, \pi_1 \rangle : V \times V \to V \times V \times V \times V $
    \item the canonical flip $c_V = \langle \pi_0 , \pi_2, \pi_1 , \pi_3 \rangle : V \times V \times V \times V \to V \times V \times V \times V$.
\end{itemize}
These maps fulfill all required equations and are natural transformations since all morphisms are $R$-linear. 
The universality of the vertical lift holds since the diagram
\[\begin{tikzcd}
	{V \times V \times V} && {V \times V \times V \times V} \\
	V && {V \times V}
	\arrow["{\langle \pi_0  , \pi_2, \mathrm{const}_0 ,\pi_1 \rangle}", from=1-1, to=1-3]
	\arrow["{\pi_0}"', from=1-1, to=2-1]
	\arrow["{\langle \pi_0 , \pi_2\rangle}", from=1-3, to=2-3]
	\arrow["{\langle 1_V,\mathrm{const}_0\rangle}"', from=2-1, to=2-3]
\end{tikzcd}\]
is a pullback square.    
\end{example}

\subsection{Rank of a module over a ring}\label{subsec:rank_of_a_module}
The rank of is the most common generalization of the dimension of vector spaces to modules. 
A module's rank is a natural number or infinity (like in desired Property \ref{item:dim_is_natural_number} (valued in $\mathbb N$) of Remark \ref{rem:wishlist_dimension}), invariant under isomorphism (like in desired Property \ref{item:isomorphism} (isomorphism invariant) of Remark \ref{rem:wishlist_dimension}) and it is additive under cartesian products (like in desired Property \ref{item:cartesian_products_to_sums} (additive w.r.t. products) of Remark \ref{rem:wishlist_dimension}).
Desired Property \ref{item:embeddings_inequalities} (embeddings to inequalities) holds since the rank of a submodule is less than or equal to the rank of the module itself.
Desired Property \ref{item:locality}(the same as locally) does not make sense to ask, because an open subset of a module is in general not a module itself. 

We will show below that it is indeed an $(\mathbb N_\infty, +)$-valued dimension in the sense of Definition \ref{def:monoid-valued-dimension}, so the generalized version of desired property \ref{item:pullbacks} (additive w.r.t. pullbacks) holds. 

Recall that a \textbf{principal ideal domain} (PID) is a unital commutative ring without zero divisors where every ideal is generated by a single element.
Throughout this section we fix a principal ideal domain $R$. 
\begin{definition}
    Given a $R$-module $A$. Then \textbf{the rank} $\mathrm{rk}(A) \in \mathbb N_\infty$ is the cardinality of any maximal linear independent subset of $A$. If there is no finite maximal linearly independent subset of $A$, the rank is $\rk(A) = \infty$.
\end{definition}
It is a standard result, for example in \cite{Aluffi}, that any maximal linear independent subset of $A$ has the same cardinality.
A key property of the rank is its behaviour under short exact sequences.

\begin{lemma}\label{lem:module_rank_for_short_exact_sequence_PID_finitely_generated}
    Let $R$ be a PID and let
    $$
    0 \to A \to B \to C \to 0
    $$
    be a short exact sequence of $R$-modules with finite rank. Then $\rk(A) + \rk(C) = \rk(B)$   
\end{lemma}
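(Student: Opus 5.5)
The cleanest route is to reduce the statement to the additivity of dimension for vector spaces by tensoring with the field of fractions. Let $K = \mathrm{Frac}(R)$, which exists since a PID is an integral domain. For any $R$-module $M$, the rank as defined above equals $\Dim_K(K \otimes_R M)$. A subset of $M$ is $R$-linearly independent if and only if its image in $K\otimes_R M$ is $K$-linearly independent, because denominators can be cleared. A maximal $R$-independent set $S$ spans $K\otimes_R M$ over $K$, since maximality gives, for every $m\in M$, a nonzero $r\in R$ with $rm$ in the $R$-span of $S$.

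The key steps, in order, are these:
\begin{enumerate}
\item Prove the identification $\rk(M) = \Dim_K(K\otimes_R M)$ as just sketched.
\item Use that $K = S^{-1}R$ with $S = R\setminus\{0\}$ is a localization, hence flat over $R$. Applying $K\otimes_R -$ to $0\to A\to B\to C\to 0$ therefore gives a short exact sequence $0\to K\otimes A\to K\otimes B\to K\otimes C\to 0$ of $K$-vector spaces.
\item Apply rank–nullity for vector spaces to obtain $\Dim_K(K\otimes B) = \Dim_K(K\otimes A)+\Dim_K(K\otimes C)$, which is exactly the claim.
\end{enumerate}
The finite-rank hypothesis makes this equation one of honest natural numbers.

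There is an alternative, more elementary route that avoids flatness. Take a maximal independent set $a_1,\dots,a_n$ in $A$ and elements $b_1,\dots,b_m\in B$ lifting a maximal independent set of $C$. One then shows that $\{a_i\}\cup\{b_j\}$ is independent and maximal in $B$.
\begin{itemize}
\item \textbf{Independence.} Project a relation to $C$ to kill the $b_j$-coefficients, then use independence in $A$.
\item \textbf{Maximality.} For $x\in B$, some nonzero multiple $rx$ has image in the span of the images of the $b_j$. Then $rx - \sum s_j b_j$ lies in $A$, and some nonzero multiple of it lies in the span of the $a_i$. This uses that $R$ has no zero divisors, so the product of the two nonzero scalars is nonzero.
\end{itemize}

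The main obstacle is step 1, the well-definedness and identification of rank with the vector space dimension. This is where torsion could interfere, since torsion elements die under $K\otimes_R -$. I would handle it by explicitly checking that torsion contributes neither to independent sets nor to $K\otimes M$. Everything after that is formal.
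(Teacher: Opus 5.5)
Your proposal is correct. The paper gives no argument for this lemma: it cites Exercise VI.3.14 of Aluffi and omits the proof. So there is no internal proof to match, and your sketch supplies what the paper outsources. The paper gains brevity; you gain a self-contained argument.

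Your first route is the standard one: identify $\rk(M)$ with $\Dim_K(K\otimes_R M)$, use flatness of the localization $K=S^{-1}R$, then apply rank--nullity. It proves more than the lemma asks. It uses only that $R$ is an integral domain, not a PID. It also never uses finiteness: if every infinite dimension is recorded as $\infty$, it gives Proposition~\ref{prop:module_rank_for_short_exact_sequence_PID} directly, so the paper's three-case analysis for infinite rank becomes unnecessary. As a by-product, it proves the well-definedness of rank (all maximal independent sets have the same cardinality), which the paper quotes from Aluffi.

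The one step to write out carefully is the direction ``$R$-independent $\Rightarrow$ $K$-independent''. Clearing denominators only yields $(\sum r_i m_i)\otimes 1=0$, i.e.\ $t\sum r_i m_i=0$ in $M$ for some nonzero $t\in R$. Independence then gives $t r_i=0$, and the absence of zero divisors gives $r_i=0$. This is exactly the torsion issue you flagged.

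Your second, elementary route is also correct and likewise works over any integral domain. However, it takes the well-definedness of rank as an input, since it only exhibits one maximal independent subset of $B$ of size $\rk(A)+\rk(C)$.
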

This is a standard result, for example Exercise 3.14 in chapter VI of \cite{Aluffi}, thus we omit the proof here.

 Actually, as we show below, the requirement that $A,B$ and $C$ had finite rank was unnecessary. 
\begin{proposition}\label{prop:module_rank_for_short_exact_sequence_PID}
    Let $R$ be a PID and let
    $$
    0 \to A \xrightarrow{f} B \xrightarrow{g} C \to 0
    $$
    be a short exact sequence of $R$-modules. Then $\rk(A) + \rk(C) = \rk(B)$ in $\mathbb N_\infty$.
\end{proposition}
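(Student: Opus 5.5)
The plan is to reduce to Lemma \ref{lem:module_rank_for_short_exact_sequence_PID_finitely_generated} when all three ranks are finite, and to handle infinite rank by showing both sides equal $\infty$. Since $\infty$ absorbs addition in $(\mathbb N_\infty,+)$, the statement splits into two implications. First, if $\rk(B)<\infty$, then $\rk(A)$ and $\rk(C)$ are finite. Second, if $\rk(A)<\infty$ and $\rk(C)<\infty$, then $\rk(B)<\infty$. Once these are established, the finite case is exactly the cited lemma, and in every other case both sides are $\infty$.

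For the first implication, I will show $\rk(A)\le \rk(B)$ and $\rk(C)\le\rk(B)$ directly.
\begin{itemize}
\item Because $f$ is injective and $R$-linear, it sends a linearly independent subset of $A$ to a linearly independent subset of $B$. Hence a linearly independent subset of $A$ of size $n$ gives one of size $n$ in $B$, so $\rk(A)\le\rk(B)$.
\item For $C$, take linearly independent $c_1,\dots,c_n\in C$ and lift them along the surjection $g$ to $b_1,\dots,b_n\in B$. A relation $\sum r_ib_i=0$ maps under $g$ to $\sum r_ic_i=0$, which forces all $r_i=0$. So the lifts are independent and $\rk(C)\le\rk(B)$.
\end{itemize}
Here I use that the rank is the supremum of the sizes of finite linearly independent subsets. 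This follows from the definition together with the cited fact that all maximal independent sets have the same cardinality, since any independent set extends to a maximal one by Zorn's lemma.

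For the second implication, suppose $\rk(A)=m$ and $\rk(C)=k$ are finite, and assume towards a contradiction that $B$ contains $m+k+1$ linearly independent elements. Let $B'$ be the submodule they generate, which is free of rank $m+k+1$. Restricting the sequence gives a short exact sequence
\[
0\to f^{-1}(B')\to B'\to g(B')\to 0 .
\]
All three modules are finitely generated over the PID $R$, using that submodules of finitely generated modules over a PID are finitely generated, so all have finite rank. By the monotonicity just shown for submodules and quotients, $\rk(f^{-1}(B'))\le m$ and $\rk(g(B'))\le k$. Lemma \ref{lem:module_rank_for_short_exact_sequence_PID_finitely_generated} then gives $m+k+1=\rk(B')\le m+k$, a contradiction. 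Hence $\rk(B)\le m+k<\infty$.

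The main obstacle is bookkeeping rather than any deep step. I need to confirm that rank is monotone under submodules, which is immediate, and under quotients. For quotients the lifting argument above is the right one: rank need not be monotone under arbitrary epimorphisms onto larger modules, but here $g(B')$ is a submodule of $C$, so $\rk(g(B'))\le\rk(C)$ by the submodule case. I also need to ensure the finite-rank lemma applies to the restricted sequence, which holds because every term has finite rank.
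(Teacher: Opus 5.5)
Your proof is correct, but in the one nontrivial case it takes a different route from the paper.

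The easy cases are handled the same way in both. You get $\rk(A)\le\rk(B)$ by pushing independent sets forward along the injection $f$, and $\rk(C)\le\rk(B)$ by lifting them along the surjection $g$. This is exactly the content of the paper's cases (a) and (b).

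The remaining implication is $\rk(A),\rk(C)<\infty \Rightarrow \rk(B)<\infty$, which is the contrapositive of the paper's case (c). The paper argues directly inside $B$. It extends a finite maximal independent subset of $\mathrm{Im}(f)$ to an arbitrarily large independent subset of $B$, and asserts that the new elements remain independent in $B/\mathrm{Im}(f)\cong C$. That assertion silently uses two facts:
\begin{itemize}
\item every element of $\mathrm{Im}(f)$ has a nonzero multiple in the span of the maximal independent set;
\item $R$ has no zero divisors.
\end{itemize}

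You instead restrict the sequence to the free submodule $B'$ spanned by $m+k+1$ independent elements. Lemma \ref{lem:module_rank_for_short_exact_sequence_PID_finitely_generated} then does the counting, so you never need an independence computation in a quotient. The price is that the infinite case rests entirely on the cited finite-rank lemma rather than being elementary.

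One small simplification: you do not need the fact that submodules of finitely generated modules over a PID are finitely generated. Your monotonicity bounds already give $\rk(f^{-1}(B'))\le m$ and $\rk(g(B'))\le k$, and finite rank is all the lemma requires.
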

\begin{proof}
    The case when $A,B$ and $C$ have finite rank is already covered in Lemma \ref{lem:module_rank_for_short_exact_sequence_PID_finitely_generated}. Below we prove the formula for the cases when $A,B$ or $C$ have infinite rank.


    \begin{enumerate}[label=(\alph*)]
        \item Let $A$ have infinite rank. Then, for every $n \in \mathbb N$, there is an linearly independent subset $U \subseteq A$ with cardinality $n$.  
        Since $f$ is injective, $\{ f(u) | u \in U \}$ is still linearly independent. Since $n$ was arbitrary, this proves that $B$ has also infinite rank. Thus the equation
        $$
        \rk(A) + \rk(C) = \infty + \rk(C) = \infty = \rk(B)
        $$
        holds.
        \item Let $C$ have infinite rank. Then, for every $n \in \mathbb N$, there is an linearly independent subset $U \subseteq C$ with cardinality $n$. Since $g$ is surjective, there are preimages  $(b_u)_{u \in U}$ with $g(b_u)=u$. 
        They form a linearly independent subset of $B$ that has cardinality $n$.
        Since $n$ was arbitrary, $\rk(B) = \infty$, proving that 
        $$
        \rk (A) + \rk(C) = \rk(A) + \infty = \infty = \rk(B).
        $$
        \item Let $B$ have infinite rank and $A$ have finite rank. Then $\mathrm{Im}(f) \subset B$ has finite rank. Thus, by extending a maximal linearly independent subset of $\mathrm{Im}(f)$ to an arbitrary sized linearly independent subset of B, we obtain an arbitrarily sized linearly independent subset of $B/\mathrm{Im}(f)$. Due to the uniqueness of cokernels, $B / \mathrm{Im}(f) \cong C$ and thus $\rk(C)=\infty$. Now the equation
        $$
        \rk (A) + \rk(C) = \rk(A) + \infty = \infty = \rk(B)
        $$
        holds.
    \end{enumerate}
\end{proof}
    For any $R$-module $A$, the addition $+: A \times A \to A, (a,a') \mapsto a+a'$ and the negation $-: A \to A, a \mapsto -a$ are module-homomorphisms. Out of this we built the difference $f-g$ of module-homomorphisms. For $f: A \to B$ and $g: C \to B$ is defined as 
    $$
    f-g : A \times C \to B ~,~ (a,c) \mapsto f(a) - g(c).
    $$
    We will use this difference in Lemma \ref{lem:short_exact_sequence} and the proof of Proposition \ref{prop:rank_is_a_dimension}.
    Observe that $f-g$ is surjective if $f$ or $g$ is surjective.

\begin{lemma}\label{lem:short_exact_sequence}
    Let $A,B,C$ be $R$-modules. Let $A \times_B C$ denote the pullback arising as the limit of the span $A \xrightarrow{f} B \xleftarrow{g}C$. Then the following sequence is exact
    $$
    0 \to A \times_B C \xrightarrow{\langle \pi_0 , \pi_1\rangle} A \times C \xrightarrow{f-g} \mathrm{Im}(f-g) \to 0 
    $$
\end{lemma}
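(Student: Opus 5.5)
The plan is to verify exactness at each of the three spots directly, using the concrete description of pullbacks of $R$-modules. As in Lemma \ref{lem:pullbacks_of_rings}, the forgetful functor from $R$-modules to sets preserves limits, so the pullback has the explicit form
$$
A \times_B C = \{(a,c) \in A \times C ~|~ f(a) = g(c)\},
$$
with componentwise module operations. Under this description the two projections are the restrictions of the product projections. Consequently the map $\langle \pi_0,\pi_1\rangle : A\times_B C \to A \times C$ is simply the inclusion of a submodule.

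The three checks are then as follows.

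First, exactness at $A \times_B C$ means that $\langle \pi_0,\pi_1\rangle$ is injective. This holds because the map is a subset inclusion; equivalently, an element with both projections zero is zero.

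Second, exactness at $\mathrm{Im}(f-g)$ means that $f-g$, regarded with codomain its own image, is surjective. This is immediate from the definition of the image.

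Third, exactness in the middle requires $\ker(f-g) = \mathrm{Im}\langle \pi_0,\pi_1\rangle$. We compute
$$
(f-g)(a,c) = f(a)-g(c) = 0 \iff f(a)=g(c) \iff (a,c)\in A\times_B C.
$$
So the kernel is exactly the submodule $A\times_B C$, which is precisely the image of the inclusion.

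We also need $f-g$ to be a module homomorphism. It is the composite $+\circ (f\times (-\circ g))$ of module homomorphisms, where addition and negation are linear because $R$ is commutative. This was already noted before the lemma.

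There is no real obstacle in this argument. The only point needing care is the justification that the categorical pullback in $\Mod_R$ agrees with the set-theoretic fiber product. One can either cite that the forgetful functor to $\Set$ is a right adjoint, or check the universal property directly: given $h:D\to A$ and $k:D\to C$ with $fh=gk$, the map $d\mapsto(h(d),k(d))$ lands in the fiber product, is linear, and is the unique such map.
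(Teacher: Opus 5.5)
Your proposal is correct and follows essentially the same route as the paper: injectivity of $\langle \pi_0,\pi_1\rangle$ gives exactness on the left, surjectivity onto the image gives exactness on the right, and $\ker(f-g)=\{(a,c)\mid f(a)=g(c)\}$ gives exactness in the middle; your justifications of the fibre-product description of the pullback and of the linearity of $f-g$ fill in details the paper leaves implicit. (A minor aside: addition and negation are $R$-linear in any module, so commutativity of $R$ is not actually needed for that step.)
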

\begin{proof}
    The sequence is exact at $\mathrm{Im(f-g)}$ since any map is surjective onto its image. The sequence is exact at $A \times_B C$ since $\langle \pi_0, \pi_1 \rangle $ is injective.
    The kernel of $f-g$ is exactly the elements $(a,c)$ fulfilling $f(a) = g(c)$, thus it is equal to the image of $A \times_BC$ of $A \times C$.
\end{proof}

\begin{proposition}\label{prop:rank_is_a_dimension}
    Let $R$ be a PID.
    Then assigning the rank to $\rk(A)$ to an R-module $A$ 
    is an $(\mathbb N_\infty , +)$-valued dimension $\rk$ on $\Mod_R$.
\end{proposition}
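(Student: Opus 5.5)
The plan is to apply Lemma \ref{lem:short_exact_sequence} together with Proposition \ref{prop:module_rank_for_short_exact_sequence_PID}. The surjectivity of the connecting map, which comes from $r$ being a retraction, is what makes the formula come out exactly.

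Consider a pullback of $f: C \to B$ along a retraction $r: A \to B$, as in Definition \ref{def:monoid-valued-dimension}. Lemma \ref{lem:short_exact_sequence} gives the exact sequence
$$
0 \to A \times_B C \to A \times C \xrightarrow{r-f} \mathrm{Im}(r-f) \to 0 .
$$
Since $r$ is a retraction, it is surjective. As observed just before Lemma \ref{lem:short_exact_sequence}, this makes $r-f$ surjective, so $\mathrm{Im}(r-f) = B$. The sequence therefore reads $0 \to A\times_B C \to A\times C \to B \to 0$.

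Proposition \ref{prop:module_rank_for_short_exact_sequence_PID} then gives
$$
\rk(A\times_B C) + \rk(B) = \rk(A \times C)
$$
in $\mathbb N_\infty$. It remains to show $\rk(A\times C) = \rk(A) + \rk(C)$. For this I apply Proposition \ref{prop:module_rank_for_short_exact_sequence_PID} once more, to the split exact sequence $0 \to A \to A\times C \to C \to 0$ given by the inclusion $a\mapsto (a,0)$ and the projection $\pi_1$.

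Combining the two identities gives the required equation $\rk(A\times_B C)+\rk(B)=\rk(A)+\rk(C)$. Isomorphism invariance of the rank is clear, since isomorphisms carry maximal linearly independent sets to maximal linearly independent sets. Hence $\rk$ is a well-defined function on $\pi_0(\Mod_R)$.

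This argument does not need to distinguish whether $f$ is a section or a retraction: only the surjectivity of $r$ is used. It therefore proves the stronger statement that the formula holds for any pullback along an epimorphism.

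The only delicate points are the following.
\begin{itemize}
\item Pullbacks always exist in $\Mod_R$, and the pullback in Lemma \ref{lem:short_exact_sequence} is the usual set-theoretic one, so the lemma applies directly.
\item Arithmetic in $\mathbb N_\infty$ is not cancellative. The argument never cancels anything; it only adds equalities, so this causes no problem.
\end{itemize}
I expect no real obstacle. The main thing to check is that identifying $\mathrm{Im}(r-f)$ with $B$ is justified, and it is, because $r-f$ is surjective.
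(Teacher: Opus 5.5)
Your proposal is correct and follows essentially the same route as the paper: the exact sequence from Lemma \ref{lem:short_exact_sequence}, the identification $\mathrm{Im}(r-f)=B$ from surjectivity of the retraction, and additivity of rank on short exact sequences (Proposition \ref{prop:module_rank_for_short_exact_sequence_PID}). The differences are minor. You justify $\rk(A\times C)=\rk(A)+\rk(C)$ through the split sequence $0\to A\to A\times C\to C\to 0$, where the paper simply cites additivity under products. You also observe, correctly, that only surjectivity of $r$ is used, which the paper's proof implicitly relies on as well.
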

\begin{proof}
    Let 
    \[\begin{tikzcd}
    	{A \times_B C} & A \\
    	C & B
    	\arrow[from=1-1, to=1-2]
    	\arrow[from=1-1, to=2-1]
    	\arrow["\lrcorner"{anchor=center, pos=0.125}, draw=none, from=1-1, to=2-2]
    	\arrow["r", from=1-2, to=2-2]
    	\arrow["f"', from=2-1, to=2-2]
    \end{tikzcd}\]
    be a pullback of $R$-modules of the form as the pullbacks in Definition \ref{def:monoid-valued-dimension}. Then Lemma \ref{lem:short_exact_sequence} shows that
    $$
    0 \to A \times_B C \xrightarrow{\langle \pi_0 , \pi_1\rangle} A \times C \xrightarrow{f-r} \mathrm{Im}(f-r) \to 0 
    $$
    is a short exact sequence. Since $r$ is surjective, $f-r$ is surjective i.e. $\mathrm{Im}(f-r)=B$, thus we obtain the short exact sequence
    $$
    0 \to A \times_B C \xrightarrow{\langle \pi_0 , \pi_1 \rangle} A \times C \xrightarrow{f-r} B \to 0.
    $$
    Therefore the rank formula in Proposition \ref{prop:module_rank_for_short_exact_sequence_PID} shows that $\rk(A \times C) = \rk (A \times_B C) + \rk(B)$. Due to the additivity of ranks under products this can be expanded into
    $$
    \rk (A) + \rk(C) = \rk(A \times_B C ) + \rk(B)
    $$
\end{proof}

Now we can apply Theorem \ref{thm:dimension_result_weak} and obtain a clear dimension formula for any tangent structure.
\begin{corollary}\label{cor:tangent_structures_on_Mod}
    Let $(T,p,0,+,\ell,c)$ be a tangent structure on $\Mod_R$. Then for any  $R$-module $A$, the equation
    $$
        \rk(T^2(A)) + 2 \cdot \rk(A) = 3 \cdot \rk(A)
    $$
    holds.
\end{corollary}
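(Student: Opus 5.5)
The plan is to combine Proposition \ref{prop:rank_is_a_dimension} with Theorem \ref{thm:dimension_result_weak}. First, I observe that $\rk$ is an $(\mathbb N_\infty,+)$-valued dimension on $\Mod_R$. Since every $T$-pullback is in particular a pullback, $\rk$ is then also an $(\mathbb N_\infty,+)$-valued tangent dimension for any tangent structure $(T,p,0,+,\ell,c)$ on $\Mod_R$. Theorem \ref{thm:dimension_result_weak} applied to the object $A$ yields
$$
\rk(T^2(A)) + 2\cdot\rk(A) = 3\cdot \rk(T(A)).
$$
So the stated identity follows once the right-hand side $3\cdot\rk(T(A))$ is replaced by $3\cdot\rk(A)$.

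The second step is therefore to compare $\rk(T(A))$ with $\rk(A)$. The only general tool is the section--retraction pair $0_A : A \to T(A)$, $p_A : T(A)\to A$. It splits $T(A)\cong A\oplus \ker(p_A)$, and by Proposition \ref{prop:module_rank_for_short_exact_sequence_PID} this gives
$$
\rk(T(A)) = \rk(A) + \rk(\ker p_A).
$$
The plan is then to split into cases.

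\emph{Case $\rk(A)=\infty$.} Both sides equal $\infty$ in $\mathbb N_\infty$, so the statement holds trivially.

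\emph{Case $\rk(A)$ finite.} The claim becomes equivalent to $\rk(T^2(A))=\rk(A)$. By the displayed identity from Theorem \ref{thm:dimension_result_weak}, this is equivalent to $\rk(T(A))=\rk(A)$, that is, to $\rk(\ker p_A)=0$. I would try to extract this from the remaining tangent axioms, for instance the universality of the vertical lift applied to the kernel.

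This last step is the main obstacle, and I expect it to fail in general. Example \ref{ex:degenerate_module_tangent} is a tangent structure with $T(V)=V\times V$. For $V=R$ it gives $\rk(T^2 R)+2\rk(R)=4+2=6$, whereas $3\rk(R)=3$.

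Hence the stated equation can only be established in one of two ways. It holds literally for modules of infinite rank, or for tangent structures with $\rk(\ker p_A)=0$, such as the trivial one. Otherwise it must be read with $3\cdot\rk(T(A))$ on the right, which is exactly what the first step proves. I would present the argument in that form and record the counterexample showing that the printed right-hand side cannot hold for all tangent structures.
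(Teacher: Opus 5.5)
Your proposal is correct, and your first step is exactly the paper's argument: the paper obtains this corollary simply by applying Theorem \ref{thm:dimension_result_weak} to the rank dimension of Proposition \ref{prop:rank_is_a_dimension}, which gives $\rk(T^2(A)) + 2\cdot\rk(A) = 3\cdot\rk(T(A))$, so the printed right-hand side $3\cdot\rk(A)$ is a typo (compare the correctly stated Proposition \ref{prop:algebras_rank}(b)). Your counterexample from Example \ref{ex:degenerate_module_tangent} with $A=R$, giving $4+2=6\neq 3$, correctly shows that the literal statement fails, so presenting the corrected identity together with that counterexample is the right call.
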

There are several special cases that follow from this
\begin{examples}\label{exs:vect_abGrp_as_examples_of_modules}
\begin{enumerate}
    \item Let $F$ be a field and let $\Vect_F$ denote the category of vector spaces over $F$ with $F$-linear maps as morphisms. Then the dimension is an $(\mathbb N_\infty, +)$ valued dimension and, for any tangent structure on $\Vect_F$,
    $$
        \dim(T^2(V)) + 2 \cdot \dim(V) = 3 \cdot \dim(V)
    $$
    holds for any vector space $V$.
    \item Let $\AbGrp$ be the category of abelian groups (i.e. modules over the integers). Then the rank of the abelian group is an $(\mathbb N_\infty, +)$ valued dimension and, for any tangent structure on $\AbGrp$,
    $$
        \rk(T^2(G)) + 2 \cdot \rk(G) = 3 \cdot \rk(G)
    $$
    holds for any abelian group $G$. Together with the cardinality from Theorem \ref{thm:cardinality_groups}, this means that the category of abelian groups has two distinct nontrivial $\mathbb N_\infty$-valued dimensions.
\end{enumerate}
\end{examples}
In the following two sections we will explain the consequences for finite dimensional vector spaces and Algebras over a PID.

\subsection{Module rank of an Algebra }\label{subsec:Alg_R}
In this section we utilize the free forgetful adjunction between modules and algebras to obtain a dimension of Algebras over a commutative ring $R$ with unit. According to \cite[Section IV.2]{maclane:71}, the forgetful functor $U : \Alg_R \to \Mod_R$ that sends a $R$-Algebra to its underlying $R$-module has a left-adjoint given by the tensor algebra. Thus we obtain a dimension on the category of $R$-Algebras. 

\begin{proposition}\label{prop:algebras_rank}
    Let $R$ be a commutative ring with unit and $\Alg_R$ be the category of $R$-algebras and Algebra-homomorphisms. 
    \begin{enumerate}[label = (\alph*)]
        \item The assignment $\rk$ that sends an algebra $A$ to the rank $\rk(U(A))$ of its underlying $R$-module is an $(\mathbb N_\infty,+)$-valued dimension on $\Alg_R$. 
        \item For any tangent structure $(\Alg_R  , T, p, 0,+,\ell, c)$ on the category $\Alg_R$ and for any object $A$,
            $$
            \rk(U(T^2(A))) + 2 \cdot  \rk(U(A)) = 3 \cdot \rk(U(T(A))).
            $$
    \end{enumerate}
\end{proposition}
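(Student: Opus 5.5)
Part (a) follows from Corollary \ref{cor:adjunctions_full_subcats_and_dimensions}(a) applied to the forgetful functor $U:\Alg_R \to \Mod_R$. This functor is a right adjoint, with left adjoint the tensor algebra functor by \cite[Section IV.2]{maclane:71}. Hence the composite $A \mapsto \rk(U(A))$ is an $(\mathbb N_\infty,+)$-valued dimension on $\Alg_R$, provided $\rk$ is an $(\mathbb N_\infty,+)$-valued dimension on $\Mod_R$.

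That proviso is the main obstacle. Proposition \ref{prop:rank_is_a_dimension} was proved only for $R$ a PID, whereas the present statement allows an arbitrary commutative unital ring. My first attempt is to reread the proof of Proposition \ref{prop:rank_is_a_dimension} for general $R$. It has two ingredients:
\begin{itemize}
\item the short exact sequence of Lemma \ref{lem:short_exact_sequence}, which only uses that $r$ is surjective, and so holds over any ring;
\item additivity of rank on short exact sequences, Proposition \ref{prop:module_rank_for_short_exact_sequence_PID}.
\end{itemize}

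For the second ingredient over an arbitrary commutative ring, I would use that we only need split situations. Since $r$ is a retraction with section $s$, the sequence $0\to A\times_B C\to A\times C \xrightarrow{f-r} B\to 0$ splits via $b\mapsto (s(b),0)$. It then suffices to show that rank is additive on direct sums, $\rk(X\oplus Y)=\rk(X)+\rk(Y)$. For that I would pass to the total ring of fractions, or localize at a minimal prime, and use that rank is preserved under this flat base change. If that argument fails for general $R$, the fallback is to restrict part (a) to $R$ a PID (or a field), where Proposition \ref{prop:rank_is_a_dimension} applies directly.

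Part (b) then needs no further work. It is Theorem \ref{thm:dimension_result_weak} applied to the dimension from part (a), since every dimension is in particular a tangent dimension for any tangent structure. Writing out the monoid operation $+$ of $\mathbb N_\infty$ gives
\[
\rk(U(T^2(A))) + 2\cdot \rk(U(A)) = 3\cdot\rk(U(T(A))).
\]
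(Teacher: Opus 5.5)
Your route is the paper's route. Part (a) is Corollary~\ref{cor:adjunctions_full_subcats_and_dimensions}(a) applied to the right adjoint $U:\Alg_R\to\Mod_R$, and part (b) is Theorem~\ref{thm:dimension_result_weak}. You are right that this uses Proposition~\ref{prop:rank_is_a_dimension} outside its PID hypothesis; the paper's two-line proof cites the Corollary without addressing this.

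The gap is your proposed repair for arbitrary commutative $R$, which cannot work. Over a ring with zero divisors, rank (cardinality of a maximal linearly independent subset) is neither additive on direct sums nor preserved by localizing at a minimal prime. Take $R=k\times k$ for a field $k$. As $R$-modules, $R=(k\times 0)\oplus(0\times k)$ and $\rk(R)=1$. Each summand is killed by a nonzero element of $R$, so it has no linearly independent elements and rank $0$. Localizing at the minimal prime $\mathfrak p=0\times k$ turns $k\times 0$ into a one-dimensional vector space over $R_{\mathfrak p}\cong k$, so the rank changes. The total ring of fractions of $R$ is $R$ itself, so that base change changes nothing.

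In fact part (a) is false in the stated generality. Put $e_1=(1,0)$ and $e_2=(0,1)$. For an $R$-module $M$ with $d_i=\dim_k e_iM$ finite, a subset is independent iff its projections to $e_1M$ and to $e_2M$ are both $k$-independent. Hence $\rk(M)=\min(d_1,d_2)$. Now take:
\begin{itemize}
\item $B=R$;
\item $A=k[\epsilon]/(\epsilon^2)\times k$ and $C=k\times k[\epsilon]/(\epsilon^2)$;
\item $r:A\to B$ and $f:C\to B$ the retractions killing $\epsilon$, which are unital $R$-algebra maps.
\end{itemize}
Then $A\times_B C\cong k[\epsilon]/(\epsilon^2)\times k[\epsilon]/(\epsilon^2)$, and
\[
\rk(A\times_B C)+\rk(B)=2+1\neq 1+1=\rk(A)+\rk(C).
\]
So your fallback is required, not optional. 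Part (a), and the derivation of (b) from it, must be stated for $R$ a PID. More generally it holds for an integral domain, where your localization idea does work: $\rk(M)=\dim_K(K\otimes_R M)$ for the fraction field $K$, and $K\otimes_R-$ is exact. With that restriction, your proof and the paper's coincide.
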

\begin{proof}
    Part (a) follows from Corollary \ref{cor:adjunctions_full_subcats_and_dimensions}. Part (b) follows from Theorem \ref{thm:dimension_result_weak}.
\end{proof}
In particular algebras over $\mathbb Z$ are the same as rings. Thus we obtain an additional dimension on the category of rings.
\begin{corollary}\label{cor:rk_is_dimesnion_on_rings}
    Let $\rk_\mathbb Z$ denote the rank of an abelian group over the integers. 
    \begin{enumerate}[label = (\alph*)]
        \item The rank $\rk_\mathbb Z$ is an $\mathbb N_\infty$ valued dimension on $\Ring_n$, $\Ring_1$, $\Ring_u$, $\CRing_n$, $\CRing_1$ and $\CRing_u$.
        \item For any ring $R$ and any tangent structure $(T,p,0,+,\ell,c)$ on $\Ring_n$, $\Ring_1$, $\Ring_u$, $\CRing_n$, $\CRing_1$ or $\CRing_u$,
        $$
            \rk_\mathbb Z(T^2(A)) + 2 \cdot  \rk_\mathbb Z(A) = 3 \cdot \rk_\mathbb Z(T(A)).
        $$
    \end{enumerate}
\end{corollary}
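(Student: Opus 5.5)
The plan is to treat rings as algebras over $\mathbb Z$ and transport the rank dimension of Proposition \ref{prop:rank_is_a_dimension} along a forgetful functor, exactly as in Proposition \ref{prop:algebras_rank}. Since $\mathbb Z$ is a PID, $\rk_\mathbb Z$ is an $(\mathbb N_\infty,+)$-valued dimension on $\Mod_\mathbb Z = \AbGrp$. For each of the six ring categories we take the forgetful functor $U$ to $\AbGrp$, sending a ring to its additive group. We then apply the proposition preceding Corollary \ref{cor:adjunctions_full_subcats_and_dimensions}: if $U$ preserves finite limits, then $A \mapsto \rk_\mathbb Z(U(A))$ is a dimension.

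To check that $U$ preserves the relevant pullbacks, we use Lemma \ref{lem:pullbacks_of_rings}. It says that in all six categories, whenever a pullback exists, it is the set-theoretic fibre product with componentwise operations. The pullback in $\AbGrp$ has the same description, so $U$ sends it to the pullback of additive groups.

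A small subtlety is that the proposition is stated for functors preserving finite limits, while $\Ring_1$ and $\CRing_1$ lack some pullbacks. Definition \ref{def:monoid-valued-dimension}, however, only concerns pullbacks that actually exist. Moreover, $U$ sends sections and retractions to sections and retractions, since functors preserve composites and identities. So the proof of that proposition applies verbatim: the dimension equation for a pullback of a section or retraction along a retraction in the ring category follows from the same equation in $\AbGrp$.

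For $\CRing_u$ and $\Ring_u$ one could instead cite a left adjoint to $U$ and use Corollary \ref{cor:adjunctions_full_subcats_and_dimensions}(a), as in Proposition \ref{prop:algebras_rank}. The direct argument above is uniform across all six categories, though, and also covers the categories without all pullbacks, so we use it throughout. This proves part (a).

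Part (b) is then immediate from Theorem \ref{thm:dimension_result_weak} applied to this dimension, viewed as a tangent dimension (every dimension is one). The only step needing care is the preservation of existing pullbacks in $\Ring_1$ and $\CRing_1$, and Lemma \ref{lem:pullbacks_of_rings} settles it.
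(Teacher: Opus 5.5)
Your argument breaks at the step you single out as the only one needing care. Lemma \ref{lem:pullbacks_of_rings} is false for $\Ring_1$ and $\CRing_1$, and in fact part (a) itself fails in these two categories. A full inclusion reflects limits but need not preserve them, so a pullback that exists in $\Ring_1$ need not be the set-theoretic fibre product.

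Here is a concrete counterexample. Let $A=\mathbb Z\times\mathbb Z[t]/(t^2)$, $B=\mathbb Z\times\mathbb Z$, and $r(x,a+bt)=(x,a)$; this is a retraction with section $(x,y)\mapsto(x,y)$. Let $C=\mathbb Z$ and $f(c)=(c,0)$. Then $f$ is a section in $\Ring_1$, with retraction $(x,y)\mapsto x$, but it does not preserve the unit. The fibre product $P=\{((c,bt),c)\}\cong\mathbb Z^2$ is annihilated by $((0,t),0)$, so it has no unit.

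Now let $R$ be unital and let $\alpha\colon R\to A$, $\beta\colon R\to C$ satisfy $r\alpha=f\beta$. Write $\alpha(x)=(\beta(x),b(x)t)$. Idempotence of $\alpha(1)$ forces $b(1)=0$, and then $\alpha(x)=\alpha(x)\alpha(1)=(\beta(x),0)$. Hence the pullback in $\Ring_1$, and likewise in $\CRing_1$, is $\mathbb Z$ with legs $c\mapsto(c,0)$ and $1_C$. This gives
\[
\rk_\mathbb Z(\mathbb Z)+\rk_\mathbb Z(B)=3\neq 4=\rk_\mathbb Z(A)+\rk_\mathbb Z(C),
\]
so no proof of (a) can succeed for $\Ring_1$ and $\CRing_1$.

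For the other four categories your argument is correct. In $\Ring_n$ and $\CRing_n$, pullbacks really are fibre products. In $\Ring_u$ and $\CRing_u$, the fibre product of unit-preserving maps contains $(1_A,1_C)$, so it is unital and is the pullback there. Your uniform route through the additive-group functor is fine in these cases. For $\Ring_u$ it is sounder than the paper's route, which deduces $\Ring_u$ from $\Ring_1$.

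The paper's proof goes a different way: it passes from $\Alg_{\mathbb Z}$ to $\Ring_n$ and then to $\Ring_1$ and $\CRing_1$ via Corollary \ref{cor:adjunctions_full_subcats_and_dimensions}(b). It fails at the same point, because that corollary tacitly assumes full inclusions preserve pullbacks, and the example above shows $\Ring_1\hookrightarrow\Ring_n$ does not.

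For part (b) in $\Ring_1$ and $\CRing_1$, the square defining $T_2X$ causes no trouble. Retractions between unital rings are surjective, hence unit-preserving, so that pullback is the fibre product. The vertical-lift square, however, pulls back the section $0_X$, which need not preserve units. You would need a separate argument that $T_2X$ agrees with the set-theoretic fibre product $X\times_{TX}T^2X$ before the rank formula applies.
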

\begin{proof}
    \begin{enumerate}[label = (\alph*)]
        \item Since $\Ring_n$ is the full subcategory of associative algebras in $\Alg_\mathbb Z$, it follows from Proposition \ref{prop:algebras_rank} and Corollary \ref{cor:adjunctions_full_subcats_and_dimensions} that $\rk_\mathbb Z$ is a dimension on $\Ring_n$. The categories $\Ring_1, \CRing_u$ and $\CRing_1$ are full subcategories of $\Ring_n$ and therefore Corollary \ref{cor:adjunctions_full_subcats_and_dimensions} shows that $\rk_\mathbb Z$ is a dimension on them.

        Pullbacks in $\Ring_u$ are also pullbacks in $\Ring_1$, because in either case commutative squares are pullbacks if and only if they are pullbacks of underlying sets. Thus any dimension on $\Ring_1$ is also a dimension on $\Ring_u$, in particular $\rk_\mathbb Z$. Since $\CRing_u$ is a full subcategory of $\Ring_u$, Corollary \ref{cor:adjunctions_full_subcats_and_dimensions} shows that $\rk_\mathbb Z$ is also a dimension on $\CRing_u$
        \item This now follows from \ref{thm:dimension_result_weak}.
    \end{enumerate}
\end{proof}
Thereby we now have two distinct non-trivial dimensions on $\Ring_1$ and $\Ring_u$, the characteristic (as shown in Section \ref{sec:rings}) and the rank.

If we consider the tangent structure of Example \ref{ex:tangent_on_ring}, we see that for any commmutative ring $R$ with unit,
$$
\rk_{\mathbb Z}(T(R)) = \rk_{\mathbb Z}(R[x]/(x^2)) = 2 \cdot \rk_\mathbb Z(R).
$$
Thus the tangent bundle functor doubles this dimension in this case like in the case for classical manifolds.

\subsection{Classification of Cartesian tangent structures on $\FinVect$}
The category of vector spaces over a field $F$ is exactly the category $\Mod_F$ of modules over $F$. The classical dimension $\Dim(V)$ of a vector space $V$ is exactly the rank $\rk(V)$. Due to Proposition \ref{prop:rank_is_a_dimension}, it is a $(\mathbb N_\infty, +)$-valued dimension.

All finite-dimensional vector spaces over a field $F$ are isomorphic to powers $F^n$ of the field $F$. Therefore the interaction between the tangent structure and the Cartesian structure dictates the entire tangent structure.
\begin{theorem}
    Let $F$ be a field and let $\FinVect_F$ be the full subcategory of $\Vect_F$ consisting of finite-dimensional vector spaces. Then every Cartesian tangent structure $(T,p,0,+,\ell ,c)$ on  $\FinVect_F$ fulfills either
    $$
    T(V) \cong V \quad  \forall V \in \mathrm{Obj}(\FinVect)
    $$
    or
    $$
    \qquad T(V) \cong V \times V\quad  \forall V \in \mathrm{Obj}(\FinVect).
    $$
\end{theorem}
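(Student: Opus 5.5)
The plan is to mirror the argument for $\FinSet^\mathrm{op}$ in Corollary \ref{cor:tangents_on_Finsetop} and Proposition \ref{prop:only_trivial_on_setop}: first use the Cartesian structure to show that the rank is a \emph{strong} tangent dimension, then apply Theorem \ref{thm:dimension_argument_strong}.

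\textbf{Step 1: the rank is a dimension on $\FinVect_F$.} By Proposition \ref{prop:rank_is_a_dimension} with $R=F$ (a field is a PID), $\rk=\Dim$ is an $(\mathbb N_\infty,+)$-valued dimension on $\Vect_F$. Since $\FinVect_F$ is a full subcategory, Corollary \ref{cor:adjunctions_full_subcats_and_dimensions} restricts it to an $\mathbb N$-valued dimension on $\FinVect_F$. Every dimension is a tangent dimension, so it is also a tangent dimension for the given tangent structure. Regard the values as lying in the integral domain $\mathbb Z\supseteq\mathbb N$. The defining equations of a tangent dimension hold in $\mathbb N$ and therefore in $\mathbb Z$.

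\textbf{Step 2: strongness.} In a Cartesian tangent category, $T$ preserves finite products: the canonical map $T(A\times B)\to T(A)\times T(B)$ is an isomorphism and $T(\ast)\cong\ast$. Every object is isomorphic to $F^n=\prod_{i=1}^n F$, so $T(F^n)\cong T(F)^n$. Put $a:=\Dim T(F)$. Additivity of $\Dim$ under products then gives
$$\Dim T(V)=n\cdot a=a\cdot\Dim(V)$$
for all $V$ with $\Dim V=n$. Hence $\Dim$ is a strong $\mathbb Z$-valued tangent dimension with constant $a$.

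\textbf{Step 3: apply Theorem \ref{thm:dimension_argument_strong}.} Take $X=F$, which has $\Dim F=1\neq0$. The theorem forces $a=1$ or $a=2$. If $a=1$, then $\Dim T(V)=\Dim V$ for every $V$, so $T(V)\cong V$. If $a=2$, then $\Dim T(V)=2\Dim V=\Dim(V\times V)$, so $T(V)\cong V\times V$. Both conclusions use that finite-dimensional vector spaces are classified up to isomorphism by their dimension.

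\textbf{Main obstacle.} The delicate point is Step 2. We must check that the Cartesian hypothesis really gives preservation of products by $T$, including the terminal object (the zero space). In the sense of \cite[Definition 2.8]{Cockett2014DifferentialST}, a Cartesian tangent category requires exactly that $T$ preserves finite products. We also need that the isomorphism $V\cong F^n$ transports along $T$, which holds by functoriality. Note that the statement only asserts isomorphism of objects, not identification of the structure maps, so no analysis of $p,0,+,\ell,c$ is required, unlike Case 1 of Proposition \ref{prop:only_trivial_on_setop}.
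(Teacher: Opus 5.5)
Your proposal is correct and follows the paper's proof essentially step for step. You use the Cartesian hypothesis to get $T(F^n)\cong T(F)^n$, which makes $\Dim$ a strong $\mathbb Z$-valued tangent dimension with $a=\Dim T(F)$, and then you apply Theorem \ref{thm:dimension_argument_strong}; your additional details (restricting $\rk$ to $\FinVect_F$, taking $X=F$ with $\Dim F=1$ to force $a\in\{1,2\}$, and using that finite-dimensional spaces are classified by dimension to obtain the isomorphisms) are exactly what the paper leaves implicit.
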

\begin{proof}
    Every finite dimensional vector space is of the form $V = F^n$ where $n$ is the dimension of $V$. Thus every Cartesian tangent structure will fulfill $T(V) \cong T(F^n) \cong T(F)^n$. Thus $\Dim(T(V)) = \Dim(T(F)) \cdot \Dim(V)$ which means the vector space dimension is a strong tangent dimension valued in $\mathbb Z$. Now Theorem \ref{thm:dimension_argument_strong} proves that either $\Dim(T(V)) = 2 \cdot \Dim(V) ~\forall V$ or $\Dim(T(V)) = \Dim(V) ~\forall V$.
\end{proof}

In fact, both situations are known. The trivial tangent structure with $T(V) \cong V$ is explained in Example \ref{ex:trivial_tangent_structure}. The tangent structure that doubles the dimension is the tangent structure of Example \ref{ex:degenerate_module_tangent} for the case when $R$ is a field.

\section{Homotopy, excision and the opposite category of CW complexes}\label{sec:cw}
CW complexes are extensively used in topology to describe well-behaved topological spaces. Traditionally the dimension of a CW complex is the highest dimensional cell in the complex. However, this notion is not well-behaved under homotopy. We will show in this section that it is not a categorical dimension in the sense of Definition \ref{def:monoid-valued-dimension}.

Rational homology is a tool that describes a topological space $X$ in a homotopy-invariant way with a sequence of rational vector spaces $H_n(X,\mathbb Q)_{n \in \mathbb N}$. The dimension of these vector spaces are known as the Betti numbers of the space.

The $n$-th Betti number $B_n$ is often interpreted as ``counting $n$-dimensional holes" in a space. Since the Betti numbers are about the dimension of the complement of our space, it may seem intuitive to have them be a categorical dimension on the opposite category of CW complexes.

A \textbf{CW complex} is a topological space $X$, constructed by an inductive construction out of standard cells $D_k = \{\vec x \in \mathbb R^n | x^2\}$ for every $k \in \mathbb N$. The inductive construction can be found in Chapter 0 of \cite{HatcherAT}. The space obtained at the $k$-th stage of the inductive construciton is called $X^k$ the \textbf{$k$-skeleton} of X.

A continuous map $f: X \to Y$ between CW complexes $X = \cup_{k \in \mathbb N} X^k$ and $Y = \cup_{k \in \mathbb N} Y^k$ is called \textbf{cellular} if $f(X^k) \subseteq Y^k $ for all $k \in \mathbb N$.

\begin{definition}
    We define the category $\CW$ as the subcategory of $\Top$ consisting of 
    \begin{itemize}
        \item CW complexes of objects and
        \item cellular maps as morphisms.
    \end{itemize}
\end{definition}

\subsection{Not the classical dimension of CW complexes}
The classical dimension of a CW complex is considered to be the the largest $n$ of all $n$-cells used to construct it.
 
\begin{definition}
    Let $X = \cup_{ n \in \mathbb N} X^n$ be a CW complex.
    \begin{enumerate}[label = (\alph*)]
        \item The smallest number $n \in \mathbb N$ such that $X = X^n$ is the \textbf{classical dimension} of $X$.
        \item If there is no such $n$, the classical dimension is $\infty$.
    \end{enumerate}
\end{definition}
In particular for CW complexes $X$ and $X'$, the largest dimension cell of $X \times X'$ is the sum of the largest dimensions of cells in $X$ and $X'$, so desirable property \ref{item:cartesian_products_to_sums} (additive w.r.t. products) of Remark \ref{rem:wishlist_dimension} holds with respect to the addition.

While the classical dimension is the most common notion of dimension for CW complexes, we show it is not a dimension in the sense of Definition \ref{def:monoid-valued-dimension}.
We will construct a pullback of retractions in $\mathrm{CW}$ that serves as a counterexample for the dimension formula in Definition \ref{def:monoid-valued-dimension}. 

The idea behind this counterexample is to glue together a space $\mathrm{Bl}$ that has 1-dimensional and 3-dimensional components. In the pullback, the 3-dimensional components are prevented from combining with each other by having them sent to the opposite end of the interval $D^1$.
\begin{example}\label{ex:balloon_stick_CW}
We define the ``balloon shaped" CW complex $\mathrm{Bl}$ obtained by gluing the unit interval $D^1 = [0,1]$ with a 3-cell $D^3 = \{ (x,y,z) \in \mathbb R^3 | (x+1)^2 + y^2 + z^2 \leq  1 \}$ along the origin, obtaining a space that can explicitly be described as
$$
\{(x,0,0) \in \mathbb R^3 | 0 \leq x \leq 1\} \cup \{ (x,y,z) \in \mathbb R^3 | (x+1)^2 + y^2 + z^2 \leq  1 \}
$$
We define two maps $l,r: \mathrm{Bl} \to D^1 $, both contracting the 3-cell away. However, $l$ sends the end of the interval where the 3-cell was to the left and $r$ sends the end of the interval where the 3-cell was to the right. In explicit coordinates that means that
$$
l (x,y,z) = \left\lbrace 
\begin{matrix}
x & \text{if} &x\geq0 \\
0 & \text{if} &x<0
\end{matrix}
\right.
$$
$$
r(x,y,z) = \left\lbrace 
\begin{matrix}
1-x & \text{if} &x\geq0 \\
1 & \text{if} &x<0 .
\end{matrix} \right.
$$
It is easy to see that both $l$ and $r$ are retractions. 
As one can see directly from the Figure \ref{fig:baloon_pullback} the pullback
\[\begin{tikzcd}
	{\mathrm{Bl} \times_{D^1} \mathrm{Bl}} & {\mathrm{Bl}} \\
	{\mathrm{Bl}} & {D^1}
	\arrow[from=1-1, to=1-2]
	\arrow[from=1-1, to=2-1]
	\arrow["r", from=1-2, to=2-2]
	\arrow["l"', from=2-1, to=2-2]
\end{tikzcd}\]
will be an interval with a 3-cell glued to both ends. 

\begin{figure}
    \centering
    \includegraphics[width=0.5\linewidth]{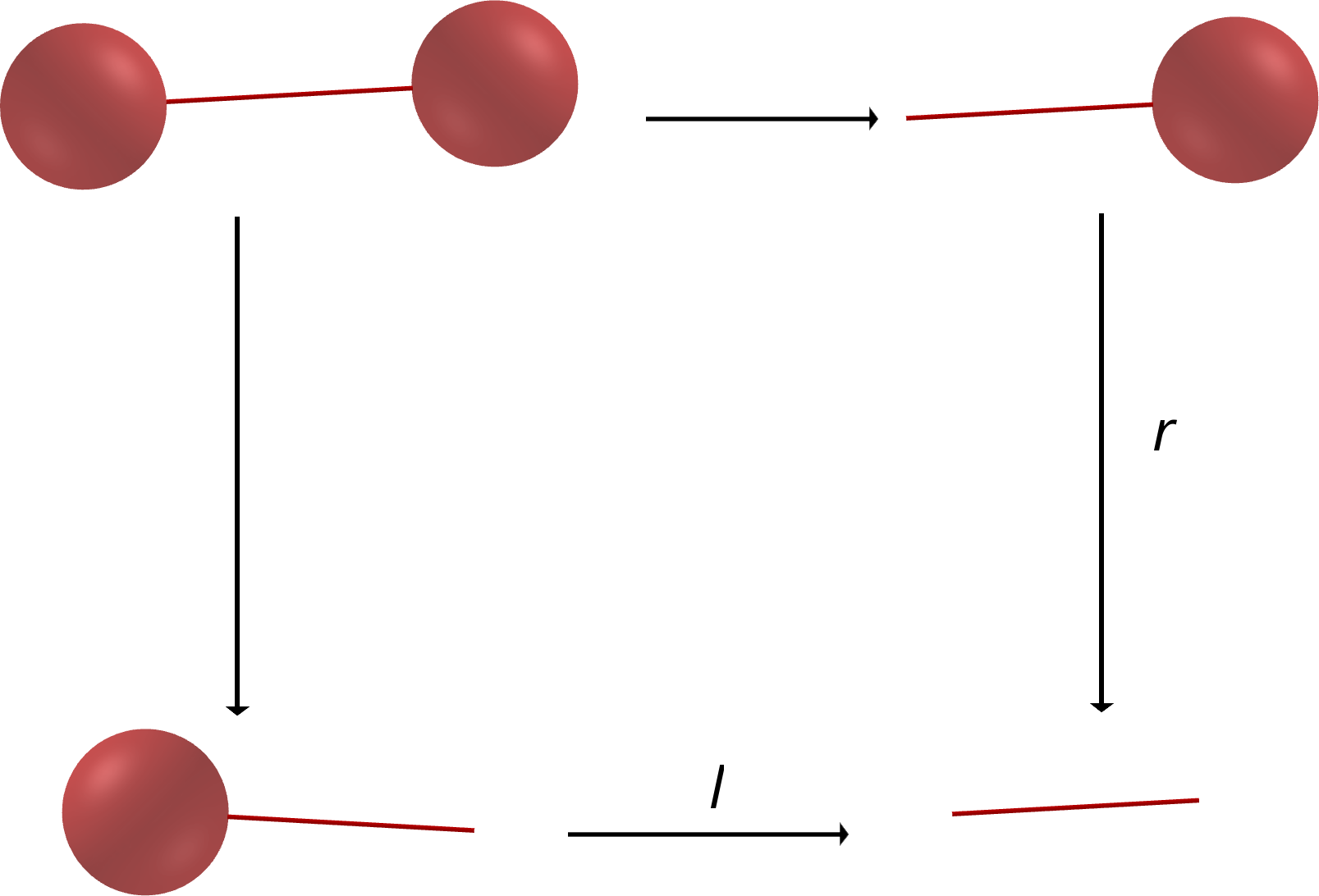}
    \caption{The pullback of the interval with a 3-cell at the left and the interval with a 3-cell at the right is an interval with a 3-cell on either side.}
    \label{fig:baloon_pullback}
\end{figure}

Since 
    $$
    \dim(\mathrm{Bl} \times_{D^1} \mathrm{Bl}) + \dim(D^1) = 3+1 \neq 3+3 = \dim(\mathrm{Bl}) + \dim(\mathrm{Bl}),
    $$
    the condition in Definition \ref{def:monoid-valued-dimension} does not hold.
    Thus the classical dimension of CW complexes is not $(\mathbb N, +)$-valued dimension on $\CW$ in the sense of Definition \ref{def:monoid-valued-dimension}.
\end{example}
In general, combinatrics from the gluing (i.e. colimits) produces counterexamples proving that the dimension of CW complexes is not a dimension in the sense of Definition \ref{def:monoid-valued-dimension}.

\subsection{Betti numbers and excisive functors}
In this section, we will show that the Betti numbers are a categorical dimension on the opposite category of CW complexes.

Intuitively, the $n$th Betti number counts the number of $n$-dimensional holes in a space $X$.
Following this intuition, we check Property \ref{item:cartesian_products_to_sums} (additive w.r.t. products) of Remark \ref{rem:wishlist_dimension}. The product in $\CW^\mathrm{op}$ is the disjoint union.
Since the disjoint union of two complexes with $n$ and $k$ holes has $n+k$ holes in total, desired Property \ref{item:cartesian_products_to_sums} (additive w.r.t. products) holds on $\CW^\mathrm{op}$ for the monoid $(\mathbb N_\infty, +)$.

We will show that the Betti numbers are indeed dimensions in the sense of Definition \ref{def:monoid-valued-dimension}. Thus the sequence of all Betti numbers will be a categorical dimension valued in infinite sequences. 


In \cite{eilenberg_steenrod}, Eilenberg and Steenrod define a generalized homology theory as an assignment sending a CW complex $X$ to a sequence, $H_n(X)$, of abelian groups that fulfills a list of 7 axioms (including functoriality). Singular homology is the motivating example of a generalized homology theory. 
It is a standard result, for example in \cite[Section 2.3]{HatcherAT}, that the Mayer-Vietoris sequence still exists for any generalized homology theory.  The Mayer-Vietoris sequence will be the main tool that we need in order to verify that the Betti number is a categorical dimension.  Thus our argument will hold for any generalized homology theory.

Let $H_n$ be denote a homology theory fulfilling the Eilenberg-Steenrod axioms.
Let $\CW$ be the category of CW complexes and cellular maps. Then the $n$-th Betti number $B_n(X)= \rk(H_n(X))$ is the rank of the $n$-th homology of a space $X$. It is easy to see that this equals the dimension $\dim(H_n(X;\mathbb Q))$ of the $n$-th rational homology of $X$.

\begin{proposition}\label{prop:CW_complexes_betti_dimension}
The $n$-th Betti number is an $\mathbb N_\infty$-valued dimension on $\mathrm{CW}^\mathrm{op}$.
\end{proposition}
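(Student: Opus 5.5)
The plan is to read the statement in $\CW$. A pullback in $\CW^\mathrm{op}$ is a pushout in $\CW$, and a retraction $r$ in $\CW^\mathrm{op}$ is a cellular map $s: B \to A$ in $\CW$ that admits a cellular left inverse $r': A \to B$ with $r' \circ s = 1_B$. So the pullback squares of Definition \ref{def:monoid-valued-dimension} become pushout squares in $\CW$
\[\begin{tikzcd}
	B & A \\
	C & {A \sqcup_B C}
	\arrow["s", from=1-1, to=1-2]
	\arrow["f"', from=1-1, to=2-1]
	\arrow[from=1-2, to=2-2]
	\arrow[from=2-1, to=2-2]
\end{tikzcd}\]
where $s$ is a split monomorphism and $f$ is either a section or a retraction. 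We must show
$$
B_n(A \sqcup_B C) + B_n(B) = B_n(A) + B_n(C)
$$
in $(\mathbb N_\infty,+)$. I expect the hypothesis on $f$ not to be needed: only the splitting of $s$ should matter.

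The key tool is the Mayer--Vietoris sequence for this pushout:
$$
\cdots \to H_n(B) \xrightarrow{\langle H_n(s), H_n(f)\rangle} H_n(A)\oplus H_n(C) \to H_n(A\sqcup_B C) \xrightarrow{\partial} H_{n-1}(B) \to \cdots
$$
By functoriality, $H_n(r')\circ H_n(s) = 1$, so $H_n(s)$ is split injective. Hence $\langle H_n(s), H_n(f)\rangle$ is injective in every degree, including degree $n-1$. Exactness then forces every connecting map $\partial$ to vanish. The long exact sequence therefore splits into short exact sequences
$$
0 \to H_n(B) \to H_n(A)\oplus H_n(C) \to H_n(A\sqcup_B C) \to 0 .
$$
Applying Proposition \ref{prop:module_rank_for_short_exact_sequence_PID} over the PID $\mathbb Z$, which is valid even when some ranks are infinite, and using additivity of rank under direct sums gives the required identity. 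The same argument works verbatim for rational coefficients.

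The main obstacle is justifying that Mayer--Vietoris applies, i.e.\ that the square is a homotopy pushout, or equivalently an excisive triad. My first attempt is to argue that a cellular split monomorphism $s$ is a closed embedding: it is injective and continuous, and it is a homeomorphism onto its image because $r'$ restricted to that image is a continuous inverse. I would then try to show that its image is (homotopy equivalent to) a subcomplex, so that $(A, s(B))$ is a CW pair and hence a cofibration. The pushout along a cofibration is a homotopy pushout, and Mayer--Vietoris for the homotopy pushout (via the double mapping cylinder, as in \cite[Section 2.3]{HatcherAT}) follows.

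If identifying the image with a subcomplex fails, the fallback is to replace $s$ by its mapping cylinder inclusion $B \hookrightarrow M_s$, which is a cellular inclusion of a subcomplex, with $M_s \simeq A$. I would then check that the pushout in $\CW$ (whenever it exists) is homotopy equivalent to $M_s \sqcup_B C$. Once this point is settled, the rest is the formal exactness argument above.
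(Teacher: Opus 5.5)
Your route is the paper's. You read the square as a pushout in CW along a split monomorphism $s$ and make $s$ a cofibration by identifying $s(B)$ with a subcomplex. You then pass to the double mapping cylinder and use that $H_n(s)$ is split injective to kill the connecting maps in Mayer--Vietoris, so rank additivity over $\mathbb Z$ (infinite ranks allowed) gives the formula; like the paper, you never use the hypothesis on $f$. The one step you leave open is that $s(B)$ is a subcomplex. The paper settles it only by a one-line assertion, but it is genuinely needed, and your proposal does not prove it. Your mapping-cylinder fallback does not avoid it: comparing the strict pushout with $M_s \sqcup_B C$ is exactly the claim that the strict pushout along $s$ is a homotopy pushout, and that is what cofibrancy of $s$ supplies.

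What makes this step work is that $r'$ is cellular; it is not enough that $s$ is a cellular closed embedding. For a counterexample to the weaker claim, attach a $2$-cell to an arc $[v,w]$ by folding its boundary circle onto a proper subarc $[v,y]$. The closure of the $2$-cell is a $2$-sphere, and it is the image of a cellular embedding of $S^2$ with a single $0$-cell sent to $v$. It contains only part of the $1$-cell, so it is not a subcomplex.

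With $r'$ cellular the argument goes as follows.
\begin{itemize}
\item If $s(b)\in A^k$, then $b=r'(s(b))\in B^k$, so $s^{-1}(A^k)=B^k$. Hence each open $k$-cell $e$ of $B$ satisfies $s(e)\subseteq A^k\setminus A^{k-1}$.
\item Being connected, $s(e)$ lies in a single open $k$-cell $e'$ of $A$.
\item $s(e)$ is open in $e'$ by invariance of domain. It is closed in $e'$ because $s(\partial e)\subseteq A^{k-1}$ gives $s(e)=s(\bar e)\cap e'$, and $s(\bar e)$ is compact. Hence $s(e)=e'$.
\item Since $A$ is Hausdorff, $s(B)=\{x \mid s r'(x)=x\}$ is closed. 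So $s(B)$ is a closed union of cells, i.e.\ a subcomplex, and $(A,s(B))$ is a CW pair.
\end{itemize}
This also shows that the pushout exists in CW and coincides with the topological one: the cells of $A$ not in $s(B)$ are attached to $C$ via $f$. Both you and the paper use this implicitly.
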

\begin{proof}
A pullback along a retraction in $\mathrm{CW}^\mathrm{op}$ corresponds to a pushout along a section $s$ in CW. 
\[\begin{tikzcd}
	{C \sqcup_A B} & B \\
	C & A
	\arrow[from=1-2, to=1-1]
	\arrow[from=2-1, to=1-1]
	\arrow["s"', hook, from=2-2, to=1-2]
	\arrow["f", from=2-2, to=2-1]
\end{tikzcd}\]
Sections are exactly embeddings of retracts which are cellular embeddings of closed subspaces and therefore subcomplexes. Thus by Proposition 0.16 of \cite{HatcherAT} it has the homotopy extension property and is therefore a cofibration.


Section 10.7 of \cite{may1999concise} shows that the pushout along any cofibration is homotopy-equivalent to the double mapping cylinder.
$$
C \sqcup_A B \simeq {_CM_{B}} = ( C \sqcup (A \times I) \sqcup B ) / \sim , \text{ where }(a,0) \sim f(a)\text{ and }(a,1) \sim s(a).
$$

This double mapping cylinder is the union of the two open sets $X = C \sqcup (A \times [0,1)) / \sim$ and $Y = (A \times (0,1]) \sqcup B / \sim$. Their intersection $X \cap Y = A \times (0,1)$ is homotopy-equivalent to $A$. 
Thus we obtain the Mayer Vietoris sequence
$$
...\to H_{n+1}(_CM_B) \xrightarrow{~0~}  
H_n(A) \xrightarrow{\langle H_n(s),H_n(f) \rangle} 
H_n(B) \oplus H_n(C) \longrightarrow H_n(_CM_B) \xrightarrow{~0~} H_{n-1}(A) \to ...
$$
where the maps $H_{n+1}(_CM_B) \to H_n(A)$ are zero since $H_n(s)$ is a section and therefore the kernel of $(\langle H_n(s),H_n(f) \rangle)$ is zero.
Therefore
$$
B_n(C \sqcup_A B) + B_n(A) = B_n(C) + B_n(B).
$$
and thus $B_n$ is an $\mathbb N_\infty$-valued dimension on CW$^\mathrm{op}$.
\end{proof}

Instead of considering the Betti numbers as an infinite sequence of dimensions, we can also consider them as a dimension valued in infinite sequences of numbers: Let $\mathbb N^\infty_\infty$ be the set of infinite sequences (or infinite vectors) in $\mathbb N_\infty$. It has a commutative monoid structure $+$, given by component-wise addition.
\begin{corollary}
    The sequence of Betti numbers 
    $$
    (B_n)_{n \in \mathbb N} : X \mapsto (B_n(X))_{n \in \mathbb N}
    $$
    is an $(\mathbb N_\infty^\infty , +)$-valued dimension on $CW^\mathrm{op}$.
\end{corollary}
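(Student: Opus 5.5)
The corollary follows from Proposition \ref{prop:CW_complexes_betti_dimension} by checking the defining condition of Definition \ref{def:monoid-valued-dimension} one coordinate at a time. Since $(\mathbb N_\infty^\infty,+)$ carries component-wise addition, an equation between sequences holds exactly when it holds in every component $n \in \mathbb N$. This is the whole plan: equality of sequences is reduced to equality of Betti numbers.

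First, I check that the assignment is well defined on $\pi_0(\CW^\mathrm{op})$. Isomorphic objects of $\CW^\mathrm{op}$ are homeomorphic CW complexes. Homology is functorial, so homeomorphic complexes have isomorphic $H_n$, and hence equal Betti numbers for every $n$. Consequently the whole sequence $(B_n(X))_{n\in\mathbb N}$ depends only on the isomorphism class of $X$.

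Next, I fix a pullback in $\CW^\mathrm{op}$ of the shape required in Definition \ref{def:monoid-valued-dimension}. That is, I take a pullback of $f$ along a retraction $r$, where $f$ is either a section or a retraction. For each $n$, Proposition \ref{prop:CW_complexes_betti_dimension} gives
$$
B_n(A\times_B C) + B_n(B) = B_n(A) + B_n(C)
$$
in $\mathbb N_\infty$. The addition on $\mathbb N_\infty^\infty$ is defined componentwise, so these equations for all $n$ assemble into
$$
(B_n)(A\times_B C) + (B_n)(B) = (B_n)(A) + (B_n)(C)
$$
in $\mathbb N_\infty^\infty$. This is exactly the required condition.

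I do not expect any real obstacle here. The only point to watch is that the proposition is stated for each fixed $n$ and covers both cases, $f$ a section and $f$ a retraction; its proof uses only that $s$ is a section in $\CW$, which is given in both cases. One should also confirm that $(\mathbb N_\infty^\infty,+)$ is a commutative monoid. This holds because it is a product of copies of the commutative monoid $(\mathbb N_\infty,+)$, with the zero sequence as the neutral element.
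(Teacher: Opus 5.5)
Your proposal is correct and is essentially the paper's argument: the paper gives no separate proof, and the corollary follows by applying Proposition~\ref{prop:CW_complexes_betti_dimension} in each degree $n$, using that addition in $(\mathbb N_\infty^\infty,+)$ is componentwise. Your remark is also accurate that the proposition's proof only uses that the retraction $r$ in $\CW^\mathrm{op}$ is a section $s$ in $\CW$, so both cases for $f$ are covered.
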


\begin{corollary}
    For every tangent structure $T$ on $CW^{op}$, the Betti numbers $(B_n)_{n \in \mathbb N}$ satisfy 
    $$
    B_n(T^2(X)) + 2 \cdot B_n(X) = 3 \cdot B_n(T(X)).
    $$
\end{corollary}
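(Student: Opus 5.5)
The plan is to derive this as a direct consequence of Theorem \ref{thm:dimension_result_weak}, applied to the dimension given by the previous corollary. By that corollary, $X \mapsto (B_n(X))_{n \in \mathbb N}$ is an $(\mathbb N_\infty^\infty, +)$-valued dimension on $\CW^\mathrm{op}$ in the sense of Definition \ref{def:monoid-valued-dimension}.

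First I will note that every $M$-valued dimension is an $M$-valued tangent dimension, whatever the tangent structure. Definition \ref{def:tangent_dimension} only imposes the pullback formula on $T$-pullbacks, and these are in particular pullbacks. So for any tangent structure $T$ on $\CW^\mathrm{op}$, the sequence of Betti numbers is an $(\mathbb N_\infty^\infty,+)$-valued tangent dimension.

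Theorem \ref{thm:dimension_result_weak} then gives, for every object $X$,
$$(B_n(T^2X))_n + 2\cdot (B_n(X))_n = 3\cdot (B_n(TX))_n$$
in $\mathbb N_\infty^\infty$. Since the monoid operation is componentwise addition, and $k\cdot m$ means the $k$-fold sum, I will read off the $n$-th component to obtain the stated identity. Alternatively, one can apply Theorem \ref{thm:dimension_result_weak} directly to each single $B_n$, using Proposition \ref{prop:CW_complexes_betti_dimension}.

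There is no real obstacle in this argument. The only point to check is that the two pullbacks used in the proof of Theorem \ref{thm:dimension_result_weak} are of the admissible form in $\CW^\mathrm{op}$: they are pullbacks along the retraction $p$, taken either against the retraction $p$ or against the section $0$. This holds in any tangent category, because $p\circ 0 = 1$. Finally, the arithmetic in $\mathbb N_\infty$ involving $\infty$ needs no cancellation, so the identity holds even when some Betti numbers are infinite.
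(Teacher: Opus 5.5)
Your proposal is correct and follows the paper's route: the corollary is Theorem~\ref{thm:dimension_result_weak} applied to the $(\mathbb N_\infty^\infty,+)$-valued Betti-number dimension on $\CW^\mathrm{op}$ (every dimension is automatically a tangent dimension), read off componentwise. One small slip: the vertical-lift pullback is taken along the retraction $T(p_X)$, not along $p$. Its section is $T(0_X)$ by functoriality, so the argument is unaffected.
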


We will now explore a counterexample of a pushout of Hausdorff spaces along a section which is not homotopy-equivalent to the double mapping cylinder. This serves to show that our proof of Proposition \ref{prop:CW_complexes_betti_dimension} does not work in the more general setting of Hausdorff spaces and continuous maps.
\begin{example}\label{ex:hausdorff_cylinder_not_pushout}
 Consider the Hausdorff spaces
\begin{align*}
    A' =& \left\lbrace x \in \mathbb R | x=\frac{1}{n} \text{ for some } n \in \mathbb N_{>0}\right\rbrace, 
    \\
    B' =& \left\lbrace (x,y,z) \in \mathbb R^3 | (x,y,z)=\left(\frac{1}{n} , \frac{1}{n} \sin(\varphi) , \frac{1}{n} \cos(\varphi) \right) \text{ for some } n \in \mathbb N_{>0} ~,~ \varphi \in [0,2\pi]\right\rbrace,
    \\
    A =& \{ 0 \} \cup A' ,  \text{ and }
    \\
    B =& \{ (0,0,0) \} \cup B'
\end{align*}
and the pushout diagram
\[\begin{tikzcd}
	{*} & A \\
	B & P
	\arrow["{i_0}", from=1-1, to=1-2]
	\arrow["{i_{(0,0,0)}}"', from=1-1, to=2-1]
	\arrow[from=1-2, to=2-2]
	\arrow[from=2-1, to=2-2]
	\arrow["\lrcorner"{anchor=center, pos=0.125, rotate=180}, draw=none, from=2-2, to=1-1]
\end{tikzcd}\]
where the strict pushout is 
$$
P = \{(-x,0,0)|x \in A\} \cup B \subset \mathbb R^3.
$$
As shown in Appendix A and visualized in Figure \ref{fig:M_isnt_the_pushout}, the double mapping cylinder is not homotopy-equivalent to this strict pushout.
\end{example}
\begin{figure}
    \centering
    \includegraphics[width=0.9\linewidth]{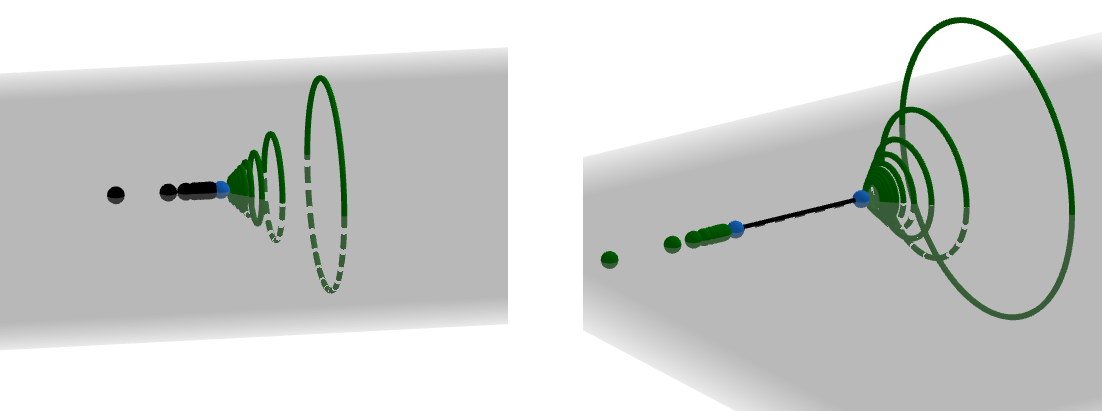}
    \caption{On the left is the pushout of $A$ and $B$ from Example \ref{ex:hausdorff_cylinder_not_pushout}, on the right the double mapping cylinder $_AM_B$. They are not homotopy-equivalent. The intuitive reason is that every open neighbourhood around the blue point on the left contains infinitely many dots and circles, whereas on the right there is no such point.}
    \label{fig:M_isnt_the_pushout}
\end{figure}

The most important property we used in the proof of Proposition \ref{prop:CW_complexes_betti_dimension} was that a generalized homology theory sends the pushout square
\[\begin{tikzcd}
	{C \sqcup_A B} & B \\
	C & A
	\arrow[from=1-2, to=1-1]
	\arrow[from=2-1, to=1-1]
	\arrow["s"', hook, from=2-2, to=1-2]
	\arrow["f", from=2-2, to=2-1]
\end{tikzcd}\]
to a pullback square of modules and that the rank of a module is a dimension. In homotopy theory, homotopy functors that send homotopy pushout squares to homotopy pullback squares are generally known as \textbf{1-excisive functors}. Below we generalize Proposition \ref{prop:CW_complexes_betti_dimension} for 1-excisive functors.

In the proposition below, we assume that homotopy limits/colimits are the same as strict limits/colimits.  This allows us to compare the hypotheses in Definition \ref{def:monoid-valued-dimension} (which uses strict limits/colimits) to the properties of excisive functors (which are defined by their effect on homotopy limits/colimits).
\begin{proposition}\label{prop:excisive_funcotrs_preserve_dimension}
    Let $\mathbb X$ be a model category in which (strict) pushouts of sections along sections and (strict) pushouts of sections along retractions are homotopy pushouts.
    Let $\mathbb Y$ be a complete and cocomplete category. We will consider it as a model category with trivial model structure.    
    Let $F: \mathbb X  \to \mathbb Y$ be an 1-excisive functor.    
    If $\dim$ is a categorical dimension on $\mathbb Y$, then the assignment $\dim \circ F$ that sends and object $X$ of $\mathbb X$ to $\dim(F(\mathbb X))$ is a categorical dimension on $\mathbb X^\mathrm{op}$.
\end{proposition}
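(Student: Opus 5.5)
We unwind what a pullback in $\mathbb X^\mathrm{op}$ of the type in Definition \ref{def:monoid-valued-dimension} is in $\mathbb X$, push it through $F$, and read off the dimension identity in $\mathbb Y$. A pullback in $\mathbb X^\mathrm{op}$ of $f$ along a retraction $r$, with $f$ a section or a retraction, is a pushout in $\mathbb X$
\[\begin{tikzcd}
	A & B \\
	C & {C \sqcup_A B}
	\arrow["s", from=1-1, to=1-2]
	\arrow["g"', from=1-1, to=2-1]
	\arrow[from=1-2, to=2-2]
	\arrow[from=2-1, to=2-2]
\end{tikzcd}\]
where $s$ (the dual of $r$) is a section and $g$ (the dual of $f$) is either a retraction or a section. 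By the hypothesis on $\mathbb X$, this strict pushout is a homotopy pushout. Since $F$ is 1-excisive, $F$ sends it to a homotopy pullback in $\mathbb Y$. Because $\mathbb Y$ carries the trivial model structure (weak equivalences are isomorphisms, and every map is a fibration and cofibration), homotopy pullbacks in $\mathbb Y$ are strict pullbacks. So
\[\begin{tikzcd}
	{F(C \sqcup_A B)} & {F(B)} \\
	{F(C)} & {F(A)}
	\arrow[from=1-1, to=1-2]
	\arrow[from=1-1, to=2-1]
	\arrow["{F(s)}", from=1-2, to=2-2]
	\arrow["{F(g)}"', from=2-1, to=2-2]
\end{tikzcd}\]
is a pullback in $\mathbb Y$. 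For this I need to read ``1-excisive'' covariantly, homotopy pushouts to homotopy pullbacks, as in the discussion preceding the proposition. The square's orientation is then forced: $F(C\sqcup_A B)$ sits at the pullback corner and the maps point toward $F(A)$. That is exactly the orientation needed for a pullback in $\mathbb Y$ indexed by a pullback in $\mathbb X^\mathrm{op}$.

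The main obstacle is checking that this square in $\mathbb Y$ has the shape Definition \ref{def:monoid-valued-dimension} requires: a pullback along a retraction, of a section or a retraction. Here there is a variance problem. $F$ is covariant on $\mathbb X$, so it sends the section $s$ to a section $F(s)$, not a retraction. Two approaches are available.

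\begin{enumerate}
\item Interpret $F$ as contravariant from $\mathbb X$ to $\mathbb Y$, that is, as a covariant functor $\mathbb X^\mathrm{op}\to\mathbb Y$. This matches the model example of homology in Proposition \ref{prop:CW_complexes_betti_dimension}, where the Mayer--Vietoris argument really used the duality between sections and retractions. Then $F$ sends the retraction $r$ of $\mathbb X^\mathrm{op}$ to a retraction, and $f$ to a section or retraction accordingly. The square is then literally of the required type.
\item If $F$ must stay covariant, work with the symmetric form of the identity. In the example classes (modules with rank), the dimension identity holds for pullbacks of sections along sections as well, so the conclusion can be drawn in that form.
\end{enumerate}

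I would try the first approach and state explicitly that functoriality is taken so that sections in $\mathbb X$ go to retractions in $\mathbb Y$. Since any functor preserves identities and composition, it preserves the equation $r\circ s=1$, so section/retraction pairs go to retraction/section pairs under this variance.

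Once the square is of the required type, the defining property of $\dim$ on $\mathbb Y$ gives
\[
\dim(F(C\sqcup_A B)) + \dim(F(A)) = \dim(F(B)) + \dim(F(C)).
\]
This is exactly the condition of Definition \ref{def:monoid-valued-dimension} for $\dim\circ F$ on the original pullback in $\mathbb X^\mathrm{op}$. Isomorphism invariance of $\dim\circ F$ is immediate because $F$ preserves isomorphisms, so $\dim\circ F$ is well defined on $\pi_0(\mathbb X^\mathrm{op})$, which finishes the argument.
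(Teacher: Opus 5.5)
Your three steps are exactly the paper's proof: the strict pushout in $\mathbb X$ is a homotopy pushout by hypothesis, $F$ turns it into a homotopy pullback (a strict pullback for the trivial model structure on $\mathbb Y$), and you read off the identity. You are right that the last step needs the image square to be one of the squares covered by Definition~\ref{def:monoid-valued-dimension}. The paper asserts this without checking. But you do not supply that check for the proposition as stated, and your first paragraph gets the orientation wrong.

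A covariant $F$ sends the pushout square to a square of the same shape. So 1-excisiveness makes $F(A)$ the pullback of $F(B)\xrightarrow{F(j_B)}F(C\sqcup_A B)\xleftarrow{F(j_C)}F(C)$, where $j_B,j_C$ are the coprojections. Your square, with $F(s)\colon F(B)\to F(A)$, is not the image of the pushout under any covariant functor. The identity is symmetric in the two diagonal corners, so the formula you want is unchanged, but the legs are now $F(j_B)$ and $F(j_C)$.

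Write $t$ for a retraction of $s$. Then $j_C$ is always a section, with retraction induced by $1_C$ and $g\circ t$. If $g$ is a section with retraction $q$, then $j_B$ is a section too, with retraction induced by $1_B$ and $s\circ q$. In that case the square in $\mathbb Y$ is a pullback of a section along a section, on which Definition~\ref{def:monoid-valued-dimension} imposes nothing. That is the missing step.

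Neither of your fixes closes it. Approach~1 correctly proves a different statement. If $F$ is a functor $\mathbb X^{\mathrm{op}}\to\mathbb Y$ taking these pullbacks of $\mathbb X^{\mathrm{op}}$ to pullbacks, then $F(s)$ is a retraction and $F(g)$ a section or retraction. The argument is then the paper's earlier transport result for limit-preserving functors, applied to these particular squares. Its natural instance is rational cohomology, where the split Mayer--Vietoris sequence yields exactly such a pullback; homology, the paper's motivating example, is covariant.

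Approach~2 rests on a false claim: rank does not satisfy the identity for pullbacks of sections along sections. In $\Mod_R$ take $A=C=R$, $B=R^2$ and both maps $x\mapsto(x,0)$. The pullback is $R$, and $\rk(R)+\rk(R^2)=3\neq 2=\rk(R)+\rk(R)$. Besides, a property of one example cannot give the result for an arbitrary dimension on $\mathbb Y$.

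The covariant statement needs an extra hypothesis, for instance that $\dim$ satisfies the identity on the squares $F$ actually produces. In the Betti-number case this comes from the Mayer--Vietoris square being bicartesian (a short exact sequence), not from Definition~\ref{def:monoid-valued-dimension} applied to ranks.
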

\begin{proof}
    The pullbacks of Definition \ref{def:monoid-valued-dimension} in $\mathbb X^\mathrm{op}$ are pushouts in $\mathbb X$ and by assumption they are also homotopy pushouts. Since $F$ is excisive, it sends them to pullbacks in $\mathbb Y$. The dimension formula for $\dim$ now implies the dimension formula for $\dim \circ F$.
\end{proof}

\begin{remark}
    It may be possible to define a \textbf{homotopy categorical dimesnion} using homotopy limits/colimits in place of the strict limits/colimits of Definition \ref{def:monoid-valued-dimension}. 
    This homotopy dimension would be preserved (analogously to Proposition \ref{prop:excisive_funcotrs_preserve_dimension}) by any excisive functors between any model categories, without the requirement that homotopy limits/colimits and strict limits/colimits coincide.

    Given some higher tangent structure, like in \cite{BauerBurkeChing}, where the pullbacks of the tangent structure are replaced by homotopy pullbacks, this homotopy dimension should give rise to results analogous to Theorems \ref{thm:dimension_result_weak}, \ref{thm:dimension_argument_strong}, \ref{thm:dimension_result_weak} and \ref{thm:dimension_argument_strong}.

\end{remark}

\begin{remark}\label{rem:betti_number_inequalities}
    Desired Property \ref{item:embeddings_inequalities} (embeddings to inequalities) of Remark \ref{rem:wishlist_dimension} does not hold for the Betti numbers as dimensions on $\CW^\mathrm{op}$. There are monomorphisms $A \hookrightarrow A'$ in  $\CW^\mathrm{op}$ (i.e. epimorphisms $A' \rightarrow A$ in $\CW$) such that $B_n(A) > B_n(A')$. One example of this is the map 
    $$
    [0,1]=I \to S^1 = \{e^{ix}\} \subset \mathbb C~,~x \mapsto e^{3\pi x}.
    $$
    Since it winds more than once around the circle it is surjective and thus an epimorphism and $B_1(I)=0$ and $B_1(S^1)=1$.

    However, if we only consider sections $A \hookrightarrow A'$ in $\CW^\mathrm{op}$ (i.e. retractions in \CW), Property \ref{item:embeddings_inequalities} (embeddings to inequalities) follows from functoriality of homology. In the future, we hope this insight can help us refine the definition of dimension into a functorial and universal construction.  
\end{remark}

\section{Concluding remarks}

The goal of this notion of dimension is not to generalize many classical notions of dimensions (though it generalizes at least some). The goal is to give an extra tool when searching for tangent structures on a given category. In particular, if one has a categorical dimension in mind and wishes for it to be compatible with a tangent structure (so that the categorical dimension becomes a strong dimension with respect to the tangent structure), then one only needs to consider tangent bundle endofunctors which leave the dimension invariant or for which the tangent bundle endofunctor doubles the dimensions.  This can exclude many cases, thereby offering another tool in constructing tangent structures.

\subsubsection*{Ideas for further examples}
Across the literature there are many more notions of ``sizes'' of objects. For example the magnitude for finite metric spaces and graphs, first introduced by Andrew Solow and Stephen Polasky in \cite{biodiversity} and further elaborated by Tom Leinster in \cite{Leinster_magnitude} has some promising properties that suggests it may be a dimension. 

Additionally, in section \ref{sec:cw} we saw that from any generalized homology theory one obtains an infinite family of dimensions. We suspect that similar results might holds for homologies of graphs and links. Finding dimensions in such traditionally non-geometric areas will make it possible to find tangent structures in them and obtain a geometric perspective to areas that are not traditionally considered as geometry.

\subsubsection*{Relations to the tangent category literature}
In \cite{Leung2017}, Leung classifies tangent structures on a category $\mathbb X$ in terms of functors from the category $\Weil$ of Weil-Algebras into the endofunctors of $\mathbb X$. One can show that the rank of the underlying $\mathbb N$-module is a categorical dimension on $\Weil$.
Thus, in the future, we will investigate if there is a way to construct a nontrivial dimension on a given tangent category using the dimension on the category $\Weil$. If we prove such a result for every tangent category, this will provide another tool to prove that a certain category has only the trivial tangent structure by proving that there is no nontrivial categorical dimension on it.

In \cite{coalgebras_of_differential_categories}, Cockett, Lemay and Rory Lucyshyn-Wright proved that coEilenberg-Moore categories of differential categories are tangent categories. Therefore we are curious if there is some notion of dimension that describes differential categories, similar to the way how the dimensions of Definition \ref{def:monoid-valued-dimension} describe tangent categories.

While we defined dimensions using retractions, in differential geometry any pullback along a submersion fulfills desired Property \ref{item:pullbacks} (additive w.r.t. pullbacks). In \cite{lanfranchi2025tangentdisplaymaps}, Cruttwell and Lanfranchi prove that display maps are a generalization of submersions in the setting of tangent categories.
Thus, in order to gain a better understanding of dimensions, it seems worthwhile to investigate the behaviour of categorical dimensions with respect to pullbacks along display maps.

\subsubsection*{Universal dimension}
The notion of dimension in Definitions \ref{def:monoid-valued-dimension} and \ref{def:tangent_dimension} is the dimension one needs for the purpose of understanding the universality of the vertical lift. 

We hope that eventually we will discover a closely related notion of dimension that is universal with respect to some list of desired properties similar to Remark \ref{rem:wishlist_dimension}, but this list has not been determined here. Definition \ref{def:monoid-valued-dimension} does not fully characterize a dimension, as showcased by the fact that there are multiple nontrivial dimensions on given categories. For example the cardinality and the rank are both nontrivial dimensions on the category of abelian groups.

In we listed some desirable properties of dimension, some of which our notion of dimension fulfills and some of which it does not fulfill. Our definition of a dimension is not universal and in particular there are multiple dimensions on a given category.

Desired Property \ref{item:embeddings_inequalities} (embeddings to inequalities) of Remark \ref{rem:wishlist_dimension} states that embeddings of sub-manifolds are sent to inequalities of dimensions: If $M' \hookrightarrow M$ is a submanifold of $M$, then $\dim(M') \leq \dim(M)$.

While Remark \ref{rem:betti_number_inequalities} shows that for the Betti numbers not all monomorphisms give rise to inequalities, all sections are sent to inequalities by all examples we gave.

We hope that this observation can motivate a future generalization of dimension which is not just an assignment of objects, but actually a functor.

\bibliography{sample}

@misc{geoff_peripatetic,
    author = {G. Vooys},
    year = {February 20th 2025},
    note = {personal communication}
}

@book{brendon_topo_and_geo,
    author = {G. E. Brendon},
    title = {Topology and Geometry},
    publisher = {Springer New York, NY},
    series ={ Graduate Texts in Mathematics },
    year = {1993}
}

@book{Leee_intro_smooth_manifolds,
    author = {J. M. Lee},
    series ={ Graduate Texts in Mathematics },
    title = {Introduction to Smooth Manifolds},
    publisher = {Springer New York, NY},
    year = {2012},
    edition = {2}
}

@article{eilenberg_steenrod,
    author = {S. Eilenberg and N.E. Steenrod},
    title = {Axiomatic Approach to Homology Theory},
    journal = {Proc. Natl. Acad. Sci. U.S.A.},
    volume = {31 (4)},
    pages = {117-120},
    year = {1945}
}

@book{Vakil_rising_sea,
    author = {R. Vakil},
    title = {The Rising Sea: Foundations of Algebraic Geometry},
    publisher = {Princeton University Press},
    year = {2025}
}

@book{Gallian_contemporary_abstract_algebra,
    author = {J. A. Gallian},
    title = {Contemporary Abstract Algebra},
    publisher = {Belmont, Calif. : Brooks/Cole, Cengage Learning},
    year = {2010}
}

@book{Kaplansky_infinite_abelian_groups,
author = {Kaplansky, I.},
series = {Dover Books on Mathematics},
publisher = {Dover Publications},
title = {Infinite abelian groups.},
year = {2018}
}

@InProceedings{coalgebras_of_differential_categories,
  author =	{Cockett, R. and Lemay, J.-S. P. and Lucyshyn-Wright, R. B. B.},
  title =	{{Tangent Categories from the Coalgebras of Differential Categories}},
  booktitle =	{28th EACSL Annual Conference on Computer Science Logic (CSL 2020)},
  pages =	{17:1--17:17},
  series =	{Leibniz International Proceedings in Informatics (LIPIcs)},
  ISBN =	{978-3-95977-132-0},
  ISSN =	{1868-8969},
  year =	{2020},
  volume =	{152},
  editor =	{Fern\'{a}ndez, Maribel and Muscholl, Anca},
  publisher =	{Schloss Dagstuhl -- Leibniz-Zentrum f{\"u}r Informatik},
  address =	{Dagstuhl, Germany},
  URL =		{https://drops.dagstuhl.de/entities/document/10.4230/LIPIcs.CSL.2020.17},
  URN =		{urn:nbn:de:0030-drops-116607},
  doi =		{10.4230/LIPIcs.CSL.2020.17}
}

@book{Aluffi,
    series={Graduate studies in mathematics},
	title = { Algebra: Chapter 0 },
	isbn = {978-1-4704-6571-1},
    publisher={American Mathematical Society},
	author = {Aluffi, P.},
	year = {2009}
}

@book{Eisenbud:CommutativeAlgebrawithaViewTowardAlgebraicGeometry,
title = { Commutative Algebra with a View Toward Algebraic Geometry},
author = {Eisenbud, D.},
series = {Graduate Texts in Mathematics},
publisher = {Springer New York},
isbn = {978-0-387-94268-1},
year={1995}
}

@article{Leinster_magnitude,
    author = {T. Leinster},
    title = {The magnitude of metric spaces},
    journal = {Doc. Math.},
    year = {2013},
    volume ={18},
    pages = {857–905}
}

@article{biodiversity,
    author = {A. R. Solow and S. Polasky},
    title = {Measuring biological diversity},
    journal = {Environmental
and Ecological Statistics},
    volume = {1},
    pages ={95-107},
    year = {1994}
}

@book{spivak_book, place={Cambridge}, series={London Mathematical Society Lecture Note Series}, title={Polynomial Functors: A Mathematical Theory of Interaction}, publisher={Cambridge University Press}, author={Niu, N. and Spivak, D. I.}, year={2025}, collection={London Mathematical Society Lecture Note Series}}

@misc{McAdam_arxiv,
      title={Vector bundles and differential bundles in the category of smooth manifolds}, 
      author={B. MacAdam},
      publisher={arXiv:2007.11708},
      year={2020},
      eprint={2007.11708},
      archivePrefix={arXiv},
      primaryClass={math.CT},
      url={https://arxiv.org/abs/2007.11708}, 
}

@misc{ikonicoff2025abelianizationtangentcategories,
      title={From Abelianization to Tangent Categories}, 
      author={S. Ikonicoff and J.-S. P. Lemay and T. Van der Linden},
      year={2025},
      publisher={arXiv:2510.12324},
      archivePrefix={arXiv},
      primaryClass={math.CT},
      url={https://arxiv.org/abs/2510.12324}
}

@misc{lanfranchi2025tangentdisplaymaps,
      title={Pullbacks in tangent categories and tangent display maps}, 
      author={G. Cruttwell and M. Lanfranchi},
      year={2025},
      publisher={arXiv:2502.20699},
      eprint={2502.20699},
      archivePrefix={arXiv},
      primaryClass={math.CT}
}

@book{HatcherAT,
      author        = "Hatcher, A.",
      title         = "{Algebraic topology}",
      publisher     = "Cambridge Univ. Press",
      address       = "Cambridge",
      year          = "2000",
      url           = "https://cds.cern.ch/record/478079",
}

@book{maclane:71,
  address = {New York},
  author = {MacLane, S.},
  mrclass = {18-02},
  mrnumber = {MR0354798 (50 \#7275)},
  mrreviewer = {H.-B. Brinkmann},
  note = {Graduate Texts in Mathematics, Vol. 5},
  pages = {ix+262},
  publisher = {Springer-Verlag},
  title = {Categories for the Working Mathematician},
  year = 1971
}

@article{Cockett2014DifferentialST,
  title="Differential Structure, Tangent Structure, and SDG",
  author="J. R. B. Cockett and G. S. H. Cruttwell",
  journal="Applied Categorical Structures",
  year="2014",
  volume="22",
  pages="331-417"
}

@article{rosicky,
     author = {Rosick\'y, J.},
     title = {Abstract tangent functors},
     journal = {Diagrammes},
     note = {talk:3},
     publisher = {Universit\'e Paris 7, Unit\'e d'enseignement et de recherche de math\'ematiques},
     volume = {12},
     year = {1984},
     zbl = {0561.18008},
     mrnumber = {800500},
     language = {en},
     url = {http://www.numdam.org/item/DIA_1984__12__A3_0/}
}

@article{cockett2016diffbundles,
  doi = {10.48550/ARXIV.1606.08379},
  url = {https://arxiv.org/abs/1606.08379},
  author = {Cockett, J. R. B. and Cruttwell, G. S. H.},
  title = {Differential bundles and fibrations for tangent categories},
  year = {2016}
}

@book{may1999concise,
  title={A Concise Course in Algebraic Topology},
  author={May, J.P.},
  isbn={9780226511832},
  lccn={99296133},
  series={Chicago Lectures in Mathematics},
  url={https://books.google.ca/books?id=g8SG03R1bpgC},
  year={1999},
  publisher={University of Chicago Press}
}

@article{Leung2017,
    author = {P. Leung},
    title = {Classifying tangent structures using {W}eil algebras},
    journal = {Theory Appl. Categ.},
    volume = {32},
    pages = {286-337},
    year = {2017}
}

@misc{BauerBurkeChing, 
  doi = {10.48550/ARXIV.2101.07819},
  url = {https://arxiv.org/abs/2101.07819},
  author = {Bauer, K. and Burke, M. and Ching, M.},
  title = {Tangent infinity-categories and {G}oodwillie calculus},
  publisher = {arXiv},
  year = {2021},
  copyright = {Creative Commons Attribution Non Commercial No Derivatives 4.0 International}
}
\clearpage
\appendix
\section{When the pushout is not the double mapping cylinder}
The purpose of this appendix is to prove the statements made in Example \ref{ex:hausdorff_cylinder_not_pushout}. Concretely, this appendix is a proof of Proposition \ref{prop:hausdorff_cylinder_not_pushout}.
The goal of this is to show that, while they form a dimension on the category of CW complexes and cellular maps, our proof does not work for the Betti numbers as a dimension on the category of Hausdorff spaces and continuous maps.

First we will recall some definitions used in the statements.

\begin{definition}\label{def:homotopy_equivalence_components_doublemappingcylinder}
Let $X, Y$ be topological spaces and $f,f': X \to Y$ be continuous maps. Let $I=[0,1]$ denote the closed unit interval.
\begin{enumerate}[label = (\alph*)]
    \item The maps $f$ and $f'$ are \textbf{homotopic}, if there is a continous map $h:I \times X \to Y$ such that $h(0,x) = f(x)$ and $h(1,x)=f'(x)$ for all $x \in X$. Such a map $h$ is called a \textbf{homotopy}.
    \item A \textbf{homotopy equivalence} between $X$ and $Y$ consists of continuous maps $g: X \to Y$ and $g': Y \to X$ and homotopies $h$ between $g \circ g'$ and the identity $1_Y$ and $h'$ between $g' \circ g$ and the identity $1_X$.
    \item Two points $x,y \in X$ are \textbf{path-connected} if there is a path between them, i.e. a continuous map $\gamma: I \to X$ such that $\gamma(0) = x$ and $\gamma(1) = y$. A \textbf{path-component} of $X$ is a maximal path-connected subset.
    \item Given a span $X \xleftarrow{f} Y \xrightarrow{g} Z$ of topological spaces, the double mapping cylinder $_XM_Z$
$$    
_XM_Z = ( X \sqcup (Y \times I) \sqcup Z ) / \sim , \text{ where }(y,0) \sim f(y)\text{ and }(y,1) \sim g(y), ~ \forall y \in Y.$$
\end{enumerate}
\end{definition}

The following proposition is the central result of this appendix.
\begin{proposition}\label{prop:hausdorff_cylinder_not_pushout}
    For the Hausdorff spaces 
\begin{align*}
    A' =& \left\lbrace x \in \mathbb R | x=\frac{1}{n} \text{ for some } n \in \mathbb N_{>0}\right\rbrace, 
    \\
    B' =& \left\lbrace (x,y,z) \in \mathbb R^3 | (x,y,z)=\left(\frac{1}{n} , \frac{1}{n} \sin(\varphi) , \frac{1}{n} \cos(\varphi) \right) \text{ for some } n \in \mathbb N_{>0} ~,~ \varphi \in [0,2\pi]\right\rbrace,
    \\
    A =& \{ 0 \} \cup A' ,  \text{ and }
    \\
    B =& \{ (0,0,0) \} \cup B'
\end{align*}
and the pushout diagram
\[\begin{tikzcd}
	{*} & A \\
	B & P
	\arrow["{i_0}", from=1-1, to=1-2]
	\arrow["{i_{(0,0,0)}}"', from=1-1, to=2-1]
	\arrow[from=1-2, to=2-2]
	\arrow[from=2-1, to=2-2]
	\arrow["\lrcorner"{anchor=center, pos=0.125, rotate=180}, draw=none, from=2-2, to=1-1]
\end{tikzcd}\]
there is no homotopy equivalence between $P$ and the double mapping cylinder $_AM_B$
\end{proposition}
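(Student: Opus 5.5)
The plan is to use only two homotopy invariants: the set of path components $\pi_0$, and first homology $H_1$ of each component. Then a continuity argument at the single point of $P$ where everything accumulates finishes the job.

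Write $o$ for the origin of $\mathbb R^3$ and $\iota:A\to P$, $x\mapsto(-x,0,0)$. Write $C_n\subset B'$ for the circle of radius $\frac1n$ centred at $(\frac1n,0,0)$. List the path components on each side as follows.

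In $P$ they are:
\begin{itemize}
\item the point $\{o\}$;
\item the isolated points $\iota(\frac1n)$;
\item the circles $C_n$.
\end{itemize}
In $P$, both the points $\iota(\frac1n)$ and the circles $C_n$ converge to $o$.

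In $_AM_B$ they are:
\begin{itemize}
\item one arc $J$, the image of $I$, joining $a_0=0\in A$ to $b_0=(0,0,0)\in B$;
\item the isolated points $\frac1n\in A'$;
\item the circles $C_n\subset B'$.
\end{itemize}
In $_AM_B$ the points $\frac1n$ accumulate only at $a_0$, and the circles only at $b_0$. Moreover $a_0\neq b_0$, and the two ends of $J$ have disjoint neighbourhoods. This separation is the feature that $P$ lacks.

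Suppose $g:{_AM_B}\to P$ and $g':P\to {_AM_B}$ form a homotopy equivalence. The first step is the standard one: $g'$ induces a bijection $\pi_0(P)\to\pi_0({_AM_B})$, and an isomorphism on $H_1$ of each component. Hence $g'$ restricts to a bijection between the components with nonzero $H_1$, namely the circles. It also restricts to a bijection between the contractible components, namely the points of $P$ and $J$ together with the isolated points of $_AM_B$.

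Next, look at each circle. Since $g'|_{C_n}$ is nonzero on $H_1$, its image is a circle component $C_{m_n}$, and the map has nonzero degree, so it is surjective. Because $g'$ is injective on components, the indices $m_n$ are pairwise distinct, so $m_n\to\infty$.

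Now use continuity of $g'$ at $o$. Since the circles $C_n\to o$ in $P$, the whole circles $C_{m_n}$ must converge into every neighbourhood of $g'(o)$. The only point of $_AM_B$ whose every neighbourhood contains entire circles $C_m$ with $m$ large is $b_0$, so $g'(o)=b_0$.

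Apply the same reasoning to the points $\iota(\frac1n)\to o$. They lie in pairwise distinct contractible components, so all but at most one of them are sent to distinct isolated points $\frac1{k_n}\in A'$, with $k_n$ pairwise distinct. The exception is the one that may land in $J$; but $J$ already receives $o$, so at most one further component can go there, and in fact none can by injectivity. Thus $k_n\to\infty$, so $g'(\iota(\frac1n))\to a_0$. Continuity at $o$, however, forces $g'(\iota(\frac1n))\to g'(o)=b_0$. Since $a_0\neq b_0$ and $_AM_B$ is Hausdorff, this is a contradiction.

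The main obstacle is the bookkeeping in the first step. One must check that a homotopy equivalence really gives a bijection on $\pi_0$ that respects which components have nonzero $H_1$. The cleanest way is to observe that homotopies preserve path components, and then apply $H_1$ functorially to $g\circ g'\simeq 1_P$ and $g'\circ g\simeq 1_{_AM_B}$, restricted componentwise.

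A secondary point is to verify the topology of $P$ and of $_AM_B$ near the accumulation points. The key facts are that in the quotient $_AM_B$ the points $a_0$ and $b_0$ have disjoint neighbourhoods, and that no neighbourhood of a point other than $b_0$ contains entire circles $C_m$ for infinitely many $m$. Both follow from the explicit description of open sets in the double mapping cylinder.
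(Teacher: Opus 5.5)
Your argument is correct. It uses the same ingredients as the paper's proof, namely that a homotopy equivalence is injective (indeed bijective) on path components, that it cannot match a circle component with a contractible one, and continuity at the wedge point $o$. The difference is in how these are organized.

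The paper fixes $p_0=f(\vec 0)$ and splits into five cases according to where $p_0$ lies in ${}_AM_B$.
\begin{itemize}
\item If $p_0$ is an isolated point, lies on a circle, or lies in the interior of the arc, then a small open neighbourhood of $p_0$ pulls back to a neighbourhood of $\vec 0$. That neighbourhood contains infinitely many components, all sent into one component, which contradicts injectivity on $\pi_0$.
\item If $p_0$ is the accumulation point of the isolated points, then a nearby circle must be sent to an isolated point. If $p_0$ is the accumulation point of the circles, then a nearby isolated point must be sent to a circle. In either case, restricting the homotopy equivalence to the two components (the paper's restriction lemma) makes a circle homotopy equivalent to a point.
\end{itemize}

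You instead do the global bookkeeping first: componentwise $H_1$-isomorphisms send circles to circles and points to points, with pairwise distinct indices. The remark that pairwise distinct indices tend to infinity then yields two sequences converging to $o$ whose images converge to $b_0$ and to $a_0$ respectively. A single Hausdorff argument finishes.

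What each approach buys:
\begin{itemize}
\item Your version absorbs the paper's three ``non-accumulation'' cases automatically and removes the case split.
\item The cost is that you need the full statement that the bijection on $\pi_0$ respects $H_1$ of every component. The paper only ever invokes the restriction lemma for one specific pair of components.
\end{itemize}

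Two small simplifications are available.
\begin{itemize}
\item You do not need surjectivity of $g'|_{C_n}$. Knowing $g'(C_n)\subseteq C_{m_n}$ with $m_n\to\infty$ already forces $g'(o)=b_0$.
\item The sentence about ``at most one further component'' landing in $J$ can be replaced by a direct argument. Since $g'(o)=b_0\in J$ and $g'$ is injective on $\pi_0$, no $\iota(\frac1n)$ is sent into $J$.
\end{itemize}
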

Before we prove this result, we will state the intuition that homotopy-equivalences preserve path components.
\begin{lemma}\label{lem:homotopies_preserve_path_components}
    Let $A,B$ be topological spaces and let there be a homotopy equivalence between $A$ and $B$ consisting of $f: A \to B$, $g: B \to A$, $h: A \times I \to A$ and $k: B \times I \to B$. 
    \begin{enumerate}[label = (\roman*)]
        \item If $a,a' \in A$ can be connected by a path $I \to A$, $f(a)$ and $f(a')$ can be connected by a path $I \to B$.
        \item If $a,a'$ can not be connected by a path $I \to A$, $f(a)$ and $f(a')$ can not be connected by a path $I \to B$.
        \item Let $X \subset A$ be a path component of $A$ and $Y \subset B$ a path component of $B$ and let for some $x \in X$ the image $f(x)$ be in $Y$. Then the restrictions $(f|_X, g|_Y,h|_{X \times I}, k|_{Y \times I})$ form a homotopy equivalence between $X$ and $Y$.
    \end{enumerate}
\end{lemma}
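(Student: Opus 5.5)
The plan is to work directly from the definition of homotopy equivalence in Definition \ref{def:homotopy_equivalence_components_doublemappingcylinder}, using one observation throughout: a homotopy $h: A\times I\to A$ between $g\circ f$ and $1_A$ gives, for each point $a\in A$, a \emph{track} $t\mapsto h(a,t)$. This track is a path in $A$ joining $a$ and $g(f(a))$. Likewise $t\mapsto k(b,t)$ is a path in $B$ joining $b$ and $f(g(b))$. Which endpoint corresponds to $t=0$ and which to $t=1$ is irrelevant, since a path can be reversed.

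For (i), if $\gamma: I\to A$ is a path from $a$ to $a'$, then $f\circ\gamma$ is a continuous path from $f(a)$ to $f(a')$, because compositions of continuous maps are continuous. For (ii), I argue by contraposition. Suppose $\delta: I\to B$ joins $f(a)$ and $f(a')$. Then $g\circ\delta$ joins $g(f(a))$ and $g(f(a'))$. Concatenate three paths: the track of $a$ from $a$ to $g(f(a))$, then $g\circ\delta$, then the reversed track of $a'$ from $g(f(a'))$ to $a'$. This gives a path from $a$ to $a'$ in $A$, contradicting the hypothesis.

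For (iii), the main work is showing that each of the four maps lands in the right component; continuity and the homotopy identities then follow by restriction. I would proceed in four steps.
\begin{enumerate}
\item $f(X)\subseteq Y$. Every $x'\in X$ is joined to $x$ by a path, so by (i) $f(x')$ is joined to $f(x)\in Y$. Since $Y$ is a maximal path-connected set, $f(x')\in Y$.
\item $g(Y)\subseteq X$. The track of $x$ shows $g(f(x))\in X$. Every $y\in Y$ is joined to $f(x)$ by a path $\delta$. Then $g\circ\delta$ joins $g(y)$ to $g(f(x))$, so $g(y)\in X$.
\item $h(X\times I)\subseteq X$. For $x'\in X$, the track of $x'$ is a path through $x'$, so its whole image lies in the path component of $x'$, which is $X$.
\item $k(Y\times I)\subseteq Y$. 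This follows from the same argument applied to the tracks of $k$.
\end{enumerate}
The restrictions $f|_X: X\to Y$, $g|_Y: Y\to X$, $h|_{X\times I}: X\times I\to X$ and $k|_{Y\times I}: Y\times I\to Y$ are continuous for the subspace topologies. Their endpoint identities hold because they hold pointwise on all of $A$ and $B$. Hence they form a homotopy equivalence between $X$ and $Y$.

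The only step needing care is step 2. One must first produce a single point of $g(Y)$ inside $X$, which is where the track of $x$ is used, before propagating along paths. Everything else is routine composition and restriction.
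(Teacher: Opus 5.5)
Your proposal is correct and follows essentially the same route as the paper: post-composition with $f$ for (i), concatenating the tracks of $h$ with $g\circ\delta$ for (ii), and for (iii) checking that $f$, $g$, $h$, $k$ land in the right path components before restricting, using the track of $x$ to get $g(f(x))\in X$. Your write-up is only slightly more explicit than the paper's, about path reversal and about the order of the containment steps.
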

\begin{proof}
    Part (i) is immediate using the the post-composition of the path with $f$.

    For part (ii), suppose that $f(a)$ and $f(a')$ are connected by a path $\gamma: I \to B$. Then $g \circ \gamma: I \to A$ is a path in $A$ connecting $g(f(a))$ with $g(f(a'))$. The homotopy $h$ between $g \circ f$ and $1_A$ now provides paths $h(a,-): I \to A$ and $h(a',-)$ connecting $a$ with $g(f(a))$ and $a'$ with $g(f(a'))$. Composing these paths gives a path from $a$ to $a'$ proving part (ii).

    For part (iii), we need to show that the image of the restrictions is within $X$ and $Y$ respectively. 
    
    Given a $x'$ within the same path component $X$ as $x$, part (i) shows that $f(x')$ will be within the same path component as $f(x)$ which is $Y$. For any $t \in I$, $h(x',t)$ has a path connecting it to $h(x',1)=x'$, so it is within the same path component $X$ as $x'$.

    Since $h(x,-): I \to A$ is a path between $g(f(x))$ and $x$, the element $f(x) \in Y$ is sent to the connected component of $x$. Thus we can use the same argument as in the previous paragraph to obtain that the images of $g|_Y$ and $k|_{Y \times I}$ land in $X$ and $Y$ respectively.

    Now since we showed that the restrictions have the types $f|_X: X \to Y$, $g|_Y: Y \to X$, $h|_{X \times I}: X \times I \to X$ and $ k|_{Y \times I}: Y \times I \to Y$, they form a homotopy equivalence between $X$ and $Y$, since $(f,g,h,k)$ formed a homotopy equivalence between $A$ and $B$.
\end{proof}
Equipped with this Lemma we now can prove Proposition \ref{prop:hausdorff_cylinder_not_pushout}.
\begin{proof}[Proof of Proposition \ref{prop:hausdorff_cylinder_not_pushout}.]
    We use the explicit descriptions
    $$
    P = \left\lbrace(-\frac{1}{n},0,0)|n \in \mathbb N_{>0} \right\rbrace \cup \{(0,0,0)\} \cup \left\lbrace (\frac{1}{n},\frac{1}{n}\sin(\varphi), \frac{1}{n}\cos(\varphi)) | n \in \mathbb N_{>0} , \varphi \in [0,2 \pi] \right\rbrace \subset \mathbb R^3 \text{ and}
    $$
    $$
    _AM_B \cong \left\lbrace(-\frac{1}{n},0,0)|n \in \mathbb N_{>0} \right\rbrace \cup [0,1] \times \{(0,0)\} \cup \left\lbrace (1+\frac{1}{n},\frac{1}{n}\sin(\varphi), \frac{1}{n}\cos(\varphi)) | n \in \mathbb N_{>0} , \varphi \in [0,2 \pi] \right\rbrace \subset \mathbb R^3 .
    $$
    These explicit descriptions are visualized in Figure \ref{fig:M_isnt_the_pushout}.
    This proof proceeds by contradiction. Suppose there is a homotopy equivalence $(f,g,h,k)$ between $P$ and $_A M_B$. Then for $\vec 0 = (0,0,0)$ we consider the point $p_0 =f(\vec 0) \in \,_AM_B$ and a small open neighbourhood $U_0$ around it. Since $f$ is continuous $f^{-1}(U_0)$ is an open neighbourhood around $\vec 0$. Thus for any open neighbourhood $U_0$ of $p_0$, $f^{-1}(U_0)$ contains infinitely many points of the form $(-1/n,0,0)$ and infinitely many circles of the form $\{(1/n,1/n \sin(\varphi), 1/n \cos(\varphi))| \varphi \in [0,2\pi]\}$. We will now distinguish different cases where $p_0$ could be, showing that each of them leads to a contradiction
    \begin{itemize}
        \item If $p_0 = (-1/n,0,0) \in {}_AM_B$ for some $n \in \mathbb N_{>0}$, $U_0$ can be chosen to be just the set $U_0=\{p_0\}$ which is open in subset topology as an open ball of radius $1/n-1/(n+1)$. Since $f^{-1}(U_0)$ contains infinity many path components, $f$ sends infinity many path components to one path component. By Lemma \ref{lem:homotopies_preserve_path_components}(ii) this contradicts the assumption that $(f,g,h,k)$ was a homotopy-equivalence.
        \item If $p_0 = (1+1/n,1/n \sin(\varphi),1/n \cos(\varphi)) \in {}_AM_B$ for some $n \in \mathbb N_{>0}$ and some $\varphi \in [0,2\pi]$, $U_0$ can be chosen to be the open set $\{(1+1/n,1/n \sin(\varphi), 1/n \cos(\varphi))| \varphi \in [0,2\pi]\}$ (open in subspace topology). Since $f^{-1}(U_0)$ contains infinity many path components, $f$ sends infinity many path components to one path component. By Lemma \ref{lem:homotopies_preserve_path_components}(ii) this contradicts the assumption that $(f,g,h,k)$ was a homotopy-equivalence.
        \item If $p_0 = (x,0,0)  \in {}_AM_B$ for some $x \in (0,1)$, $U_0$ can be chosen to be $(0,1) \times \{(0,0)\}$ (open in subspace topology). Since $f^{-1}(U_0)$ contains infinity many path components, $f$ sends infinity many path components to one path component. By Lemma \ref{lem:homotopies_preserve_path_components}(ii) this contradicts the assumption that $(f,g,h,k)$ was a homotopy-equivalence.
        \item If $p_0=(0,0,0) \in {}_AM_B$, $U_0$ can be chosen to have a first coordinate below $1/2$. Since $f^{-1}(U_0)$ contains a circle of the form $\{(1/n,1/n \sin(\varphi), 1/n \cos(\varphi))| \varphi \in [0,2\pi]\}$ and these circles are in a different path component than $\vec 0$, Lemma \ref{lem:homotopies_preserve_path_components}(ii) implies that this circle is sent to a point of the form $(-1/m,0,0)$. Due to Lemma \ref{lem:homotopies_preserve_path_components}(i) and the fact that all these point are path-disconnected, the entire circle is sent to one point. By Lemma \ref{lem:homotopies_preserve_path_components}(iii) this implies that $(f,g,h,k)$ restricts to a homotopy equivalence between a circle and a point. It is a classical result that such a homotopy equivalence does not exist (proven for example by observing that their fundamental groups differ) leading to the required contradiction.
        \item If $p_0 = (1,0,0) \in {}_AM_B$, $U_0$ can be chosen to have a first coordinate above $1/2$. Since $f^{-1}(U_0)$ contains a isolated point of the form $(-1/n,0,0)$, Lemma \ref{lem:homotopies_preserve_path_components}(ii) implies that this point is sent to one of the circles of the form $\{(1+1/m,1/m \sin(\varphi), 1/m \cos(\varphi))| \varphi \in [0,2\pi]\}$. By Lemma \ref{lem:homotopies_preserve_path_components}(iii) this implies that $(f,g,h,k)$ restricts to a homotopy equivalence between a circle and a point. Again, this leads to the required contradiction.
    \end{itemize}
    
\end{proof}

\end{document}